 \newtheorem{theorem}{Theorem}[section]
\newtheorem{proposition}[theorem]{Proposition}
\newtheorem{lemma}[theorem]{Lemma}
\theoremstyle{remark}
\newtheorem{remark}[theorem]{Remark}
\newtheorem{definition}[theorem]{Definition}
\newtheorem{remarks}[theorem]{Remarks}
\newtheorem{example}[theorem]{Example}
\newtheorem{examples}[theorem]{Examples}
\renewcommand\L{\mathcal{L}}
\newcommand{\F}{\mathcal{F}}
\newcommand{\R}{\mathbb{R}}
\newcommand{\sC}{\mathsf{C}}
\newcommand{\sW}{\mathsf{W}}
\newcommand{\Z}{\mathbb{Z}}
\newcommand{\pr}{\on{pr}}
\newcommand\lie[1]{\mathfrak{#1}}
\renewcommand{\k}{\lie{k}}
\newcommand{\g}{\lie{g}}
\newcommand{\on}{\operatorname}
\newcommand{\Hom}{ \on{Hom}} 
\renewcommand{\ker}{ \on{ker}}
\newcommand{\Mult}{{\on{Mult}}}
\newcommand{\D}{ \mathcal{D} }
\renewcommand\a{\mathsf{a}}
\newcommand{\hra}{\hookrightarrow}
\newcommand{\xla}{\xleftarrow}
\newcommand{\rra}{\rightrightarrows}
\renewcommand{\d}{{\mathrm{d}}}
\newcommand{\ol}{\overline}
\newcommand\eps{\epsilon}
\newcommand\Om{\Omega}
\newcommand\om{\omega}
\newcommand{\f}{\frac}
\renewcommand{\H}{\ca{H}}
\renewcommand{\l}{\langle}
\renewcommand{\r}{\rangle}
\newcommand{\hh}{{ \f{1}{2}}}
\newcommand\pt{\on{pt}}
\newcommand\beqn{\begin{equation}}
\newcommand\eeqn{\end{equation}}
\newcommand{\ca}{\mathcal}
\newcommand{\wh}{\widehat}
\newcommand{\wt}{\widetilde}
\newcommand{\mf}{\mathfrak}
\newcommand{\beq}{\begin{eqnarray*}}
\newcommand{\eeq}{\end{eqnarray*}}
\newcommand{\sz}{\mathsf{s}}
\newcommand{\tz}{\mathsf{t}}
\newcommand{\botimes}{\bar{\otimes}}
\begin{document}

\author{D. Li-Bland}
\address{Department of Mathematics, University of California at Berkeley, 1071 Evens Hall, Berkeley, CA 94720, USA}
\email{libland@math.berkeley.edu}

\author{E. Meinrenken}
\address{University of Toronto, Department of Mathematics,
40 St George Street, Toronto, Ontario M4S2E4, Canada }
\email{mein@math.toronto.edu}

\title{On the Van Est homomorphism for Lie groupoids}
\begin{abstract}
The Van Est homomorphism for a Lie groupoid $G\rra M$, as introduced by Weinstein-Xu, is a cochain map  from the complex 
$C^\infty(BG)$ 
of  groupoid cochains to the Chevalley-Eilenberg complex $\sC(A)$ of the Lie algebroid $A$ of $G$. It was generalized by 
Weinstein, Mehta, and Abad-Crainic to a morphism from the Bott-Shulman-Stasheff complex $\Om(BG)$ to a 
(suitably defined) Weil algebra $\sW(A)$. In this paper, we will give an approach to the Van Est map in terms of the
Perturbation Lemma of homological algebra. This approach is used to establish the basic properties of the Van Est map. 
In particular, we show that on the normalized subcomplex, the Van Est map restricts to an algebra morphism. 
\end{abstract}
\date{\today}
\maketitle

\tableofcontents
\section{Introduction}
In their 1991 paper, Weinstein and Xu \cite{wei:ext} described an important generalization of the classical Van Est map \cite{vanEst:algCoh,vanEst:cartLeray,vanEst:groupCoh} to arbitrary
Lie groupoids $G\rra M$. Recall that the complex of groupoid cochains for $G$ consists of smooth functions on the space $B_pG$ of \emph{$p$-arrows}, that is, $p$-tuples of elements of $G$ such that any two successive elements
are composable. Its infinitesimal counterpart is the 
Chevalley-Eilenberg complex $\sC^\bullet(A)=\Gamma(\wedge^\bullet A^*)$ of the Lie algebroid of $G$.  
The generalized van Est map is a morphism of cochain complexes 
\begin{equation}\label{eq:weinxu}
\on{VE}\colon C^\infty(B_\bullet G)\to \sC^\bullet(A).
\end{equation}
Weinstein and Xu define this map in terms of the following formula, for $f\in C^\infty(B_p G)$ and $X_1,\ldots,X_p\in\Gamma(A)$, 
\begin{equation}\label{eq:expl1}
 \i(X_p)\cdots \i(X_1)\on{VE}(f)=\iota^*\sum_{s\in\mf{S}_p} \on{sign}(s) \L(X_{s(1)}^{1,\sharp})\cdots \L(X_{s(p)}^{p,\sharp}) f.
\end{equation}
Here the $X^{i,\sharp}$ for $X\in\Gamma(A)$ are the generating vector fields for certain commuting $G$-actions on $B_p G$, and $\iota\colon M\to B_pG$ is the inclusion as trivial $p$-arrows. 

Weinstein and Mehta \cite{meh:sup} indicated a generalization of \eqref{eq:weinxu} to a morphism of bidifferential complexes, 
\begin{equation}\label{eq:VEW}
\on{VE}\colon \Om^\bullet(B_\bullet G)\to \sW^{\bullet,\bullet}(A),
\end{equation}
from the Bott-Shulman-Stasheff double complex (i.e. the de Rham complex of  the simplicial manifold $B_\bullet G$) to a 
certain \emph{Weil algebra} of the Lie algebroid $A$. Their theory was formulated within the framework of supergeometry. 
Abad and Crainic \cite{aba:wei} gave a different construction of the Weil algebra and the Van Est map in terms of classical geometry, 
using \emph{representations up to homotopy}. Generalizing a result of Crainic \cite{cra:dif}, 
they proved a `Van Est theorem', stating that the map \eqref{eq:VEW} induces an isomorphism in cohomology in sufficiently low degrees
(depending on the connectivity properties of the fibers of the target map of $G$).

The Van Est map for groupoids, with its associated Van Est theorem, has a number of important applications. It arises in the context of integration problems for Poisson and Dirac manifolds  \cite{bur:intdir, cra:intpoi,cran:jacob} as well as for general Lie algebroids \cite{cra:dif,cra:intlie,tu:ktheory}. It is a tool in linearizing groupoid actions and Poisson structures \cite{cra:lin,wein:lin}, and
is related to the interplay between Cartan forms and Spencer operators \cite{cra:spenc,sal:thesis}. Finally, it enters the formulation of index theorems for foliations and more general groupoids \cite{con:ind,pfla:longind,pfla:geomind,pfla:tra}. 

The proof of a Van Est theorem in \cite{cra:dif} involves a certain double complex. In \cite{aba:wei}, this is enlarged to a triple complex. In this paper, we will show that this double/triple complex, in conjunction with the \emph {Perturbation Lemma} of homological 
algebra, may in fact be used to give a conceptual `explanation' for the van Est map itself.   The basic properties of the Van Est map follow rather easily from this approach. For example, one obtains a simple proof of the fact that the Van Est map  restricts to an algebra morphism on the normalized subcomplex, a fact first proven in \cite{meh:sup} via different techniques.

Let us briefly summarize this construction for the Van Est map \eqref{eq:weinxu}.
One begins by considering the principal $G$-bundles $\kappa_p\colon E_pG\to B_pG$, where $E_pG$ is the $p+1$-fold fiber product of $G$ with respect to the source map $\sz$.
The tangent bundle to the fibers of $\kappa_p$ defines a Lie algebroid $T_\F E_pG$.
The structure maps of the simplicial manifold $E_\bullet G$ lift to Lie algebroid morphisms; thus 
$T_\F E_\bullet G$ is a \emph{simplicial Lie algebroid}. One thus obtains a double complex, with bigraded summands 
$\sC^s(T_\F E_r G)$, and equipped with a Chevalley-Eilenberg differential $\d$ and a simplicial differential $\delta$. 
Let $\on{Tot}^\bullet \sC(T_\F EG)$ be the associated total complex. 
Pullback under the map to the base is a morphism of differential spaces
\begin{equation}\label{eq:1}
\kappa_\bullet^*\colon  C^\infty(B_\bullet G)\to \on{Tot}^\bullet \sC(T_\F EG).
\end{equation}
Similarly, the identification $T_\F E_0G=\sz^*A$ determines a pullback map $\sC(A)\to \sC(T_\F E_0G)$, 
which defines a morphism of differential spaces
\begin{equation}\label{eq:2}
\pi_0^*\colon \sC^\bullet(A)\to  \on{Tot}^\bullet \sC(T_\F EG).
\end{equation}
There is also a map $\iota_0^*\colon \on{Tot}^\bullet \sC(T_\F EG) \to \sC^\bullet(A)$ left inverse to $\pi_0^*$, 
defined using the inclusion  $A\hra T_\F E_0G$ with underlying map 
$M\hra E_0G$.  However, since this inclusion is not a Lie algebra morphism, the map $\iota_0^*$ is not a cochain map, 
in general. 

The simplicial manifold $E_\bullet G$ admits a canonical simplicial deformation retraction onto $M\subset E_\bullet G$. 
This determines a homotopy operator $h$ for the simplicial differential $\delta$ on the double complex $\sC^\bullet(T_\F E_\bullet G)$. 
We will prove:
\vskip.1in
\noindent{\bf Proposition.}
{\it The composition $\iota_0^*\circ (1+h\circ \d)^{-1}\colon \on{Tot}^\bullet \sC(T_\F EG) \to \sC^\bullet(A)$
is a cochain map, and is a homotopy inverse to $\pi_0^*$.}
\vskip.1in
This proposition is a fairly direct application of the \emph{Basic Perturbation Lemma} of homological algebra, due to 
Brown \cite{bro:twi} and Gugenheim \cite{gug:cha} (cf.~ Appendix \ref{app:perturb}).  
We will take the composition 
\begin{equation}\label{eq:def1}
 \on{VE}\colon \iota_0^*\circ (1+h\circ \d)^{-1}\circ \kappa^*\colon C^\infty(B_\bullet G)\to \sC^\bullet(A)
 \end{equation}
as a definition of the Van Est map. 
A more refined version of the Perturbation Lemma, due to Gugenheim-Lambe-Stasheff \cite{gug:per2}
(cf. Appendix \ref{app:perturb}) 
applies to cochain complexes with additional algebra structures. These conditions are not satisfied for the double complex $\sC^{\bullet}(T_\F E_\bullet G)$, but they do apply to the \emph{normalized subcomplex}. We thus recover the result of Weinstein-Xu \cite{wei:ext} that 
the Van Est map restricts to a ring homomorphism on the normalized subcomplex. 

The method generalizes to the Van Est map \eqref{eq:VEW} for the Bott-Shulman-Stasheff double complex. To this end, we will develop a new geometric description of the Weil algebra $\sW(A)$ of a Lie algebroid, as sections of a suitably defined \emph{Weil algebroid}. It 
may be regarded as a translation of the super-geometric approach of Weinstein and Mehta, and 
is of course equivalent to the description given by Abad-Crainic \cite{aba:wei}.
Working with the triple complex $\sW^{\bullet,\bullet}(T_\F E_\bullet G)$ we use the Perturbation Lemma to define 
the Van Est map:
\begin{equation}\label{eq:def2}
\on{VE}=\iota_0^*\circ (1+h\circ \d')^{-1}\circ \kappa^*\colon \Om^\bullet(B_\bullet G)\to \sW^{\bullet,\bullet}(A).
\end{equation} 
Here $\d'$ is the Chevalley-Eilenberg differential on $\sW^{\bullet,\bullet}(T_\F E_\bullet G)$. 
Again, we find that $\on{VE}$ restricts to an algebra morphism on a normalized cochains.  

Our final result is a direct formula for \eqref{eq:def2}, generalizing Equation \eqref{eq:expl1}.
Any section $X\in \Gamma(A)$ defines two kinds of 
contraction operators $\i_S(X)$ and $\i_K(X)$ on $\sW(A)$, of bidegrees $(-1,-1)$ and $(-1,0)$, respectively. (If 
$M=\pt$ so that $A=\g$ is a Lie algebra, we have $W^{p,q}(\g)=S^q\g^*\otimes \wedge^p\g^*$, and the two contraction operators 
are contractions on $S\g^*$ and $\wedge\g^*$, respectively.)
\vskip.1in
\noindent{\bf Theorem.}
{\it 
For $\phi\in \Om^q(B_pG)$, 
 $X_1,\ldots, X_p\in \Gamma(A)$, and any $n\le p$,
\[ \begin{split}
\lefteqn{\i(X_p)\cdots \i(X_{n+1})\i_S(X_n)\cdots \i_S(X_1)\on{VE}(\phi)}\\&=\iota^*\sum_{s\in\mf{S}_p} \eps(s) 
\L(X_{s(1)}^{1,\sharp})\cdots \L(X_{s(n)}^{n,\sharp})\i(X_{s(n+1)}^{n+1,\sharp})\cdots \i(X_{s(p)}^{p,\sharp})\phi.
\end{split}\]
Here $\iota\colon M\to B_pG$ is the inclusion as constant $p$-arrows, and $\eps(s)$ is $+1$ if the number of pairs 
$(i,j)$ with $1\le i< j\le n$ but $s(i)>s(j)$ is even, and $-1$ if that number is odd. 
}\vskip.1in

Our main motivation for developing our approach to  the Van Est map are integration problems for group-valued moment maps. This will be explained in a forthcoming paper. 
\vskip.2in

\noindent{\bf Acknowledgments.} We thank Marius Crainic, Rui Fernandes, Theodore Johnson-Freyd, and Xiang Tang for 
discussions and helpful comments. David Li-Bland was supported by an NSF Postdoctoral Fellowship, grant DMS 1204779. Eckhard Meinrenken was supported by an NSERC Discovery Grant.

\section{Lie groupoid and Lie algebroid cohomology}
We begin with a quick review of Lie groupoids, Lie algebroids, and the associated  cochain complexes. For more detailed information, see for example, Mackenzie \cite{mac:gen},  Moerdijk and Mr\v{c}un \cite{moe:fol} or Crainic-Fernandes \cite{cra:lect}.
\subsection{The De Rham complex of a simplicial manifold}\label{subsec:simpl1}
The basic definitions for simplicial manifolds are recalled in Appendix \ref{app:simplicial}. In short, a simplicial manifold 
is a contravariant functor $X\colon \on{Ord}\to \on{Man}$. Here $\on{Man}$ is the category of manifolds, with morphisms the 
smooth maps, and $\on{Ord}$ is the category of ordered sets $[p]=\{0,\ldots,p\}$ for $p=0,1,2,\ldots$, 
with morphisms the nondecreasing maps $[p']\to [p]$. One denotes 
$X_p=X([p])$. Of special significance are the \emph{face maps} $\partial_i\colon 
X_p\to X_{p-1}$ and \emph{degeneracy maps} $\eps_i\colon X_p\to X_{p+1}$, induced by the morphism 
$[p-1]\to [p]$ omitting $i$, respectively the morphism $[p+1]\to [p]$ repeating $i$.

The \emph{simplicial de Rham complex} of $X_\bullet$ is the double complex $\Om^\bullet(X_\bullet)$, 
with the simplicial differential 
\[ \delta=\sum_{i=0}^{p+1}(-1)^i \partial_i^*\colon \Om^q(X_p)\to \Om^q(X_{p+1}),\]
of bidegree $(1,0)$ and the second differential $\d=(-1)^p \d_{Rh}$ of bidegree 
$(0,1)$ where $\d_{Rh}$ is the de Rham differential. The two differentials commute in the graded sense, i.e. $\d\delta+\delta\d=0$, and both are graded derivations relative to the \emph{cup product}
\begin{equation}\label{eq:cupp}
 \phi\cup\phi'=(-1)^{p'q}\pr^* \phi\wedge (\pr')^*\phi'.
\end{equation}
Here $\pr\colon X_{p+p'}\to X_p$ and $\pr'\colon X_{p+p'}\to X_{p'}$ are the \emph{front face} and \emph{back face} projections, 
induced by the morphisms 
$[p]\to [p+p'],\ i\mapsto i$, respectively $[p']\to [p+p'], i\mapsto p+i$. If $S_\bullet\to X_\bullet$ is a simplicial vector bundle, with the property that the simplicial maps $S_\bullet$ are fiberwise isomorphisms, then the simplical differential $\delta$ extends to 
sections of $S_\bullet$ in an obvious way, and the cup-product generalizes to a product 
\[ \Om^q(X_p,S_p)\otimes \Om^{q'}(X_{p'},S_{p'}')\to \Om^{q+q'}(X_{p+p'},(S\otimes S')_{p+p'}).\]
Note however that only the simplicial differential $\delta$ is defined on $\Om^\bullet(X_\bullet,S_\bullet)$; 
the second differential is defined if $S_\bullet$ comes with a flat simplicial connection.

Occasionally it is better to work with the normalized subcomplex $\wt{\Om}^\bullet(X_\bullet,S_\bullet)$, consisting of forms 
that pull back to zero under all degeneracy maps. The normalized forms are a subalgebra with respect to the cup product.

Any manifold $M$ can be regarded as a simplicial manifold, by taking $M_p=M$ in all degrees and all simplicial structure maps to be the identity. The simplicial differential $\delta$ on $\Om^\bullet(M_\bullet)$ is given by the identity in odd degrees $p>0$ and zero otherwise. 

%The $q=0$ column, with simplicial differential $\delta$, is the complex of functions $C^\infty(X_\bullet)=\Om^0(X_\bullet)$; it is a subalgebra for the cup product, as is its normalized subcomplex $\wt{C}^\infty(X_\bullet)$. 

\subsection{Lie groupoids}\label{subsec:liegr}
Let $G\rra M$ be a Lie groupoid. The source and target maps are denoted by $\sz,\tz\colon G\to M$; they are submersions onto a 
submanifold $M\subseteq G$ of \emph{units}. Elements of $G$ are viewed as arrows
\[ m_0\xleftarrow{g} m_1\]
from $m_1=\sz(g)$ to $m_0=\tz(g)$. If $g$ and $g'$ are elements with $\sz(g)=\tz(g')$, then we write $gg'$ for their groupoid product. The groupoid inverse will be denoted by $g\mapsto g^{-1}$. Suppose $H\rra N$ is a second Lie groupoid. A smooth map 
$H\to G$ is called a \emph{morphism of Lie groupoids} if it restricts to a  map of units 
and intertwines all the structure maps for the Lie groupoids. It is depicted as a diagram
\begin{equation}\label{eq:morgr}
 \xymatrix{{H} \ar@<0.5ex>[r] \ar@<-0.5ex>[r]
\ar[d]_{\hat{\mu}} & {N}\ar[d]^{\mu}\\
{G } \ar@<0.5ex>[r] \ar@<-0.5ex>[r]& M
}
\end{equation}
If the map $(\hat{\mu},\sz)\colon H\to G\, _\sz\times_\mu N$ is a diffeomorphism, then
we say that \emph{$G$ acts on $N$ along $\mu$}. In this case, $G\ltimes N:=G\, _\sz\times_\mu N$
is called the \emph{action groupoid}, its target map
\[ \tz\colon G\ltimes N\to N,\ (g,n)\mapsto g.n=\tz(g,n)\]
is called the \emph{action map}, and the map $\mu\colon N\to M$ is the \emph{moment map} 
for the action. In particular, $G$  acts on its space $M$ of units; here 
$N=M$, with $\mu$ the identity map. A \emph{principal $G$-bundle} 
\begin{equation}\label{eq:principal}
 \xymatrix{{P} \ar[r]_{\kappa} \ar[d]_{\mu} & {B}\\
M&}
\end{equation}
is a manifold $P$ with a $G$-action along $\mu$, together with 
submersion $\kappa\colon P\to B$ such that $\kappa\circ \tz=\kappa\circ \sz$ as maps 
$G\ltimes P\to B$, and such that the map 
\begin{equation} \label{eq:gro} (\tz,\sz)\colon G\ltimes P\to P\times_B P
\end{equation}
is a diffeomorphism.

To define the cochain complex for a Lie groupoid $G\rra M$, let 
\[ B_pG=\{(g_1,\ldots,g_p)\in G^p|\ \sz(g_i)=\tz(g_{i+1}),\ i=1,\ldots,p-1\}\] 
be the manifold of \emph{$p$-arrows}
\begin{equation}\label{eq:parrow}
m_0 \xla{g_1} m_1 \xla{g_2} m_2 \xla{}\cdots \xla{g_p} m_p,
\end{equation} 
with base points $m_0,\ldots,m_p\in M$.  For $p=0$ we put $B_0G=M$.
Then $B_\bullet G$  is a simplicial manifold: the map 
$BG(f)\colon B_pG\to B_{p'}G$ defined by a nondecreasing map 
$f\colon [p']\to [p]$ takes the $p$-arrow \eqref{eq:parrow} to the $p'$-arrow 
\[ m_{f(0)} \xla{g_1'} m_{f(1)} \xla{g_2'} m_{f(2)} \xla{}\cdots \xla{g'_{p'}} m_{f(p')},\]
where $g_i'$ is 
obtained by composition of arrows (or insertion of trivial arrows). That is, 
$g_i'=g_{f(i)+1}\cdots g_{f(i+1)}$ for $f(i)<f(i+1)$, and $g_i'=m_i$ for $f(i)=f(i+1)$. 
In particular the degeneracy maps $\eps_i\colon B_pG\to B_{p+1}G,\ i=0,\ldots,p$ repeat the $i$-th base point, by inserting a trivial arrow, while  the face map
$\partial_i\colon B_pG\to B_{p-1}G,\ i=0,\ldots,p$ drops the $i$-th base point $m_i$:
\[ \partial_i(g_1,\ldots,g_p)=\begin{cases}
(g_2,\ldots,g_p) & i=0,\\
(g_1,\ldots,g_ig_{i+1},\ldots,g_p)&0<i<p,\\
(g_1,\ldots,g_{p-1}) &i=p.
\end{cases}
\]
For $p=1$ we have $\partial_0(g)=\sz(g),\ \ \partial_1(g)=\tz(g)$.  
%
%\[ \eps_i(g_1,\ldots,g_p)=\begin{cases}(g_1,\ldots,g_{i-1},\tz(g_i),g_i,\ldots,g_p) & 0\le i\le p\\(g_1,\ldots,g_p,\sz(g_p)) & i=p+1\end{cases}\]
%The map $\eps_0\colon B_0G=M\to B_1G=G$ is the inclusion of units. 
% As for any simplicial space, one has a chain complex (Moore complex) $\Z B_\bullet G$ with differential \[ \delta_p=\sum_{i=0}^p (-1)^i \partial_i\colon \Z B_pG\to \Z B_{p-1}G\] the alternating sum of face maps. Given a cohomological coefficient system \cite[Section 2.4]{gel:met} over $B_\bullet G$, one obtains a cochain complex. In particular, 
The de Rham complex $\Om^\bullet(B_\bullet G)$ of this simplicial manifold is a bidifferential algebra, 
called the \emph{Bott-Shulman-Stasheff} complex, after \cite{bo:rh,shu:th}. 
A $\delta$-cocycle in $\Om^q(B_0G)=\Om^q(M)$ is (by definition) a $G$-invariant $q$-form on $M$, and a $\delta$-cocycle 
$\alpha\in \Om^q(B_1G)=\Om^q(G)$  is a multiplicative $q$-form on $G$, i.e. the pull-back under groupoid 
multiplication $\Mult\colon B_2G\to G$ equals the sum $\pr_1^*\alpha+\pr_2^*\alpha$.  

The differential algebra $\Om^0(B_\bullet G)=C^\infty(B_\bullet G)$ (with the simplicial differential $\delta$) is the 
complex of \emph{differentiable groupoid cochains}.
The inclusion of units $\iota\colon M\to G$, regarded as a groupoid morphism from $M\rra M$ to $G\rra M$, 
defines an injective morphism of simplicial manifolds
$M_p=B_pM\to  B_pG$, with image  the trivial $p$-arrows. The  \emph{complex of germs}  $\Om^\bullet(B_\bullet G)_M$
is defined to be the quotient of $\Om^\bullet(B_\bullet G) $ by the ideal of forms vanishing on some neighborhood of 
$M_p\subseteq B_pG$.
Similarly we define $C^\infty(B_\bullet G)_M$. Note that these are also defined for \emph{local} Lie groupoids.

For each of the complexes considered above, there are also the normalized subcomplexes. These will be denoted  
$\wt{C}^\infty(B_\bullet G), \wt{\Om}^\bullet(B_\bullet G)$,  and so on. 

\begin{examples}\label{ex:groupoid}
\begin{enumerate}
\item Given a manifold $M$, let $\on{Pair}(M)=M\times M\rra M$ be the pair groupoid,  with source map $\sz(m',m)=m$ and target map $\tz(m',m)=m'$. The inclusion of units is the diagonal embedding $M\hra M\times M$, and the groupoid multiplication 
reads as $(m_1',m_1)(m_2',m_2)=(m_1',m_2)$, defined 
whenever $m_1=m_2'$. In this example, any $p$-arrow is uniquely determined by its base points, and the map taking a $p$-arrow to its base points defines an isomorphism $B_\bullet(\on{Pair}(M))=M^{\bullet+1}$ as simplicial manifolds, where the simplicial structure on the right hand side comes from the identification of $M^{p+1}$ as the set of maps $[p]\to M$. Thus 
$C^\infty(B_\bullet\on{Pair}(M))=C^\infty(M^{p+1})$, with the differential 
given by the formula 
\[ (\delta f)(m_0,\ldots,m_{p+1})=\sum_{i=0}^{p+1}(-1)^i f(m_0,\ldots,\wh{m_i},\ldots,m_{p+1}).\]
This complex has trivial cohomology. However, the complex  
$\sC^p(\on{Pair}(M))_M=C^\infty(M^{p+1})_M$ of 
\emph{germs} of functions along the diagonal
$M\subseteq M^{p+1}$ is the \emph{Alexander-Spanier complex} \cite{spa:al}, which is known to compute the 
cohomology of $M$ with coefficients in $\R$. 
\item
More generally, given a foliation $\ca{F}$ on $M$, one defines a groupoid $\on{Pair}_{\ca{F}}(M)$, consisting of pairs of points in the same leaf. The complex $\sC^p(\on{Pair}_\F(M))_M$ may be seen as a foliated version of the Alexander-Spanier complex; 
a coefficient system is a bundle with a fiberwise flat connection. 
\item Let $K$ be a Lie group, acting on a manifold $M$, and let  $G=K\ltimes M$. 
Then $C^\infty(B_\bullet G)$ computes the group cohomology of $K$ with coefficients in the $K$-module $C^\infty(M)$.
\end{enumerate}
\end{examples}
Any morphism $\hat{f}\colon G_1\to G_2$ of Lie groupoids  (cf. \eqref{eq:morgr}), with underlying map $f\colon M_1\to M_2$,  
extends to a morphism of simplicial manifolds $f\colon B_\bullet G_1\to B_\bullet G_2$, giving rise to a morphism of 
bidifferential algebras $f^*\colon \Om^\bullet(B_\bullet G_2)\to \Om^\bullet(B_\bullet G_1)$, 
and hence of differential algebras $f^*\colon C^\infty(B_\bullet G_2)\to  C^\infty(B_\bullet G_1)$. 
For example, the canonical morphism 
$(\tz,\sz)\colon G\to \on{Pair}(M)$ defines a morphism 
of differential graded algebras $C^\infty(M^{\bullet+1})\to C^\infty(B_\bullet G)$.

\subsection{Lie algebroid cohomology}
A Lie algebroid  is a vector bundle $A\to M$ with a bundle map $\a\colon A\to TM$ (the \emph{anchor}) and a 
Lie bracket on the space of sections $\Gamma(A)$ satisfying
\[ [X_1,f X_2]=f[X_1,X_2]+(\a(X_1)f)\, X_2,\]
for all $X_1,X_2\in \Gamma(A)$ and $f\in C^\infty(M)$. Morphisms of Lie algebroids
\begin{equation}\label{eq:lamorphism}
\xymatrix{{\!\!\!\!\! B} \ar[r]
\ar[d]_{\hat{\mu}}& {N}\ar[d]^\mu\\
{A\ } \ar[r]& M}
\end{equation}
are vector bundle maps such that the differential $T\mu\colon TN\to TM $ intertwines the anchor maps, and with a certain compatibility condition\footnote{If $B\subseteq A$ is a subbundle along a submanifold
$N\subseteq M$, the condition is that whenever $X_1,X_2\in \Gamma(A)$ extend sections $Y_1,Y_2\in\Gamma(B)$, 
then $[X_1,X_2]$ extends $[Y_1,Y_2]$.  The general 
case may be reduced to this case, by replacing the vector bundle map $\hat{\mu}$ by the inclusion $B\to A\times B$ of 
the graph of $\hat{\mu}$. (Cf.~ \cite{lib:cou}.)}
for the Lie brackets on sections, due to Higgins-Mackenzie \cite{hig:alg,mac:gen}. Such a morphism is called 
an \emph{action of $A$ on $N$ along $\mu$} if  the resulting map $B\to \mu^*A$ is an isomorphism; in this case 
$B$ is called the \emph{action Lie algebroid} and is denoted $A\ltimes N$. Given an $A$-action, the composition of 
$\mu^*\colon \Gamma(A)\to \Gamma(A\ltimes N)$ with the anchor map for $A\ltimes N$ defines a 
Lie algebra morphism $\Gamma(A)\to \Gamma(TN),\ X\mapsto X_N$, such that $X_N\sim_\mu \a(X)$.

The \emph{Chevalley-Eilenberg complex} of $A$ 
is the graded 
differential algebra $\sC^\bullet(A)=\Gamma(\wedge^\bullet A^*)$, with product the wedge product, and with the differential 
%
%\begin{equation}\label{eq:cedifferential}
$\d_{\on{CE}}\colon \sC^\bullet(A)\to \sC^{\bullet+1}(A)$
%\end{equation} 
%
given as 
%the unique derivation of degree $1$ such that \[ \begin{split} (\d_{CE} f)(X)&=\a(X)f,\\(\d_{\on{CE}}\phi)(X,Y)&=\a(X)\phi(Y)-\a(Y)\phi(X)-\phi([X,Y])\end{split}\]
%for $ f\in \sC^0(A)=C^\infty(M)$ 
%and $\phi\in \sC^1(A)$.
%
 \begin{equation}\label{eq:CEC}
  \begin{split}
 (\d_{CE}\phi)(X_0,\ldots,X_p)=& \sum_{i=0}^p (-1)^i \a(X_i)\phi(X_0,\ldots,\wh{X_i},\ldots,X_p)\\
& +\sum_{i<j} (-1)^{i+j}\phi([X_i,X_j],X_0,\ldots,\wh{X_i},\ldots,\wh{X_j},\ldots,X_p).\end{split}\end{equation}
%
%Using the obvious contraction operators $\iota_X\colon \sC^\bullet(A)\to \sC^{\bullet-1}(A)$ for $X\in\Gamma(A)$ one can define `Lie derivatives' $\L_X=[\iota_X,\d_{CE}]$ (using super commutators), with the usual relations 
% \[ [\L_X,\L_Y]=\L_{[X,Y]},\ [\L_X,\iota_Y]=\iota_{[X,Y]},\  [\L_X,\d_{CE}]=0.\]
%
%
\begin{examples}
\begin{enumerate}
\item  Given an action of a Lie algebra $\k$ on $M$, let $A=\k\ltimes M$ be the action Lie algebroid. 
Then $\sC^\bullet(A)=
 C^\infty(M)\otimes \wedge^\bullet \k^*$ is the Chevalley-Eilenberg complex of $\k$ with coefficients in $C^\infty(M)$.
\item Given a foliation $\ca{F}$ on $M$, let $A=T_{\ca{F}}M\subseteq TM$ be the tangent bundle to the foliation. Then 
$\sC^\bullet(A)= \Om^\bullet_{\ca{F}}(M)$ is the de foliated Rham complex 
(i.e., the quotient of $\Om(M)$ by forms whose pull-back to leaves are zero). 
\item Given an embedded hypersurface $N\subseteq M$, there is a Lie algebroid $A=T_NM$ whose sections are the vector fields tangent to 
$N$. (For manifolds with boundary, this is the starting point for Melrose's $b$-calculus \cite{mel:ati}.) The corresponding complex $\sC^\bullet(A)=\Om_N^\bullet(M)$ may be regarded as a space of forms on $M\backslash N$ developing a `logarithmic' singularity along $N$.  More generally, given a Lie algebroid $P\to M$ and a Lie subalgebroid
$Q\to N$ along a hypersurface, there is a Lie algebroid $A=[P:Q]$ whose sections are the sections $\sigma\in \Gamma(P)$ with the property 
$\sigma|_N\in\Gamma(Q)$. See Gualtieri-Li \cite{gua:sym}. 
\item Given a Poisson structure $\pi$ on $M$, the cotangent bundle $A=T^*M$ acquires the structure of a Lie algebroid
with anchor map $\a=\pi^\sharp \colon T^*M\to TM$, and with bracket the \emph{Koszul bracket}. The resulting differential on the algebra $\sC^\bullet(A)=\mf{X}^\bullet(M)$  of multi-vector fields is the Koszul differential $\d_\pi=[\pi,\cdot]$; 
its cohomology is the Poisson cohomology of $M$.
\end{enumerate}
\end{examples}

Any morphism of Lie algebroids $A_1\to A_2$, with underlying map $f\colon M_1\to M_2$, 
gives rise to a morphism of differential algebras $f^*\colon \sC^\bullet(A_2)\to \sC^\bullet(A_1)$. 
As a special case, the anchor map $\a\colon A\to TM$ of a Lie algebroid gives a morphism 
\[ \a^*\colon \Om^\bullet(M)=\sC^\bullet(TM)\to \sC^\bullet(A).\]

The infinitesimal counterpart to the bigraded algebra $\Om(BG)$ for a Lie groupoid is the \emph{Weil algebra} $\sW(A)$. 
A geometric model for $\sW(A)$ will be described in Section \ref{sec:weil}.

\subsection{The Lie functor}\label{subsec:liefun}
For any Lie groupoid $G\rra M$, the normal bundle 
\[ \on{Lie}(G)=\nu(M,G)\]
of $M$ in $G$ has the structure of a Lie  algebroid, with anchor map 
$\a\colon  \on{Lie}(G)\to  TM$ induced by the difference $T\tz-T\sz\colon TG\to TM$, and with the 
Lie bracket on sections defined by the identification 
\[\Gamma(\on{Lie}(G))=\on{Lie}(\Gamma(G))\]
with the Lie algebra of the infinite-dimensional group of bisections $\Gamma(G)$. 
Equivalently, the Lie bracket comes from the identification of sections $X\in \Gamma(\on{Lie}(G))$ with the Lie algebra of 
left-invariant vector 
fields $X^L\in\mf{X}(G)$ (tangent to $\tz$-fibers).
The definition of $\on{Lie}(G)$ also makes sense for
\emph{local} Lie groupoids, and it is known that any Lie algebroid $A$ arises in this way. The precise obstructions for integration to a \emph{global} Lie groupoid were determined by Crainic-Fernandes \cite{cra:exi}. 

Any  $G$-action on a manifold $N$ gives rise to a $\on{Lie}(G)$-action, 
with the action Lie algebroid $\on{Lie}(G)\ltimes N=\on{Lie}(G\ltimes N)$.   For a principal $G$-bundle $P$ as in \eqref{eq:principal}, the action Lie algebroid 
has an injective anchor map, and identifies $\on{Lie}(G)\ltimes P$ with the subbundle $\ker(T\kappa)\subseteq TP$ where 
$\kappa\colon P\to B$ is projection to the base. We hence have identifications 
\[ \mu^*\on{Lie}(G)\cong \on{Lie}(G)\ltimes P \cong \ker(T\kappa),\]
and a Lie algebroid morphism from $\ker(T\kappa)$ to $\on{Lie}(G)$. These remarks apply in particular to 
the action of $G$ on itself along $\tz$, given by multiplication from the left, as well as to the action along $\sz$, given by multiplication from the right. It identifies $\tz^*\on{Lie}(G)=\ker(T\sz)$ and $\sz^*\on{Lie}(G)=\ker(T\tz)$. On the level of sections, 
$\tz^*X=-X^R$ are the generating vector fields for the left action, while 
$\sz^*X=X^L$ are the generating vector fields for the right action.  These vector fields satisfy the commutation relations
\[ [X_1^L,X_2^L]=[X_1,X_2]^L,\ \ \ [X_1^R,X_2^R]=-[X_1,X_2]^R,\ \ \ [X_1^L,X_2^R]=0.\]
The differences $X^L-X^R$ are the generating vector fields for the conjugation action of the group $\Gamma(G)$ on $G$. 
(There is no conjugation action of $G$ on itself unless $M=\pt$.) They are tangent to $M$, and restrict to the vector field $\a(X)$.

\section{The Van Est map $C^\infty(BG)\to \sC(A)$}\label{sec:VEd}
In his proof of the Van Est theorem for Lie groupoids \cite{cra:dif}, Crainic introduced a double complex  with cochain maps from both the 
Lie algebroid complex and the Lie groupoid complex. In this section, we will use this double complex to define the Van Est map itself.

\subsection{The simplicial principal bundle $EG$}\label{subsec:simp}
For any Lie groupoid $G\rra M$ let 
\[ E_pG=\{(a_0,\ldots,a_p)\in G^{p+1},\ \sz(a_0)=\ldots=\sz(a_p)\}.\]
(cf.~ \cite[page 53]{aba:th} and Appendix \ref{app:simplicial}), and let $\pi_p\colon E_pG\to M$ 
be the common source map, $\pi_p(a_0,\ldots,a_p)=\sz(a_0)$. The space $E_pG$ has the structure of a principal 
$G$-bundle 
\begin{equation}\label{eq:thereare}
 \xymatrix{{E_pG} \ar[r]_{\kappa_p} \ar[d]_{\pi_p} & {B_pG}\\
M&}
\end{equation}
for the $G$-action $g.(a_0,\ldots,a_p)=(a_0\,g^{-1},\ldots,a_p\,g^{-1})$ along $\pi_p$, and with the 
quotient map $\kappa_p(a_0,\ldots,a_p)=(a_0a_1^{-1},\ldots,a_{p-1}a_p^{-1})$. The collection of the 
spaces defines a simplicial principal $G$-bundle $E_\bullet G\to B_\bullet G$: Regarding
$E_pG$ as maps $[p]\to G$ whose composition with the source map is constant, 
the structure map $E_pG\to E_{p'}G$ for a nondecreasing map
$f\colon [p']\to [p]$ is given by composition. In particular, 
the face maps $\partial_i\colon E_pG\to E_{p-1}G$ drop the $i$-th entry, 
while the degeneracy maps $\eps_i\colon E_pG\to E_{p+1}G$ repeat the $i$-th entry. 
Any groupoid morphism $G_1\to G_2$ defines a morphism of simplicial principal bundles $E_\bullet G_1\to E_\bullet G_2$. 

\begin{remark}
The simplicial manifold $E_\bullet G$ may be equivalently defined as $E_pG=B_p (G\ltimes G)$, 
where $G\ltimes G$ is the action groupoid for the action $g.a=ag^{-1}$. Here 
$\kappa_p$ is obtained by applying the functor $B_\bullet$ to the groupoid morphism 
$G\ltimes G\to G$.  See \cite[Definition 3.2.4]{aba:th}. 
\end{remark}

\subsection{Retraction of $EG$ onto $M$}
For the trivial groupoid $M\rra M$ we have
$E_pM=B_pM=M$ in all degrees. The inclusion $\iota\colon M\to G$ as units is a groupoid morphism, defining a  
simplicial map 
\[ \iota_p\colon M_p\to E_pG,\ \ m\mapsto (m,\cdots,m)\]
with $\pi_p\circ \iota_p=\on{id}_M$.
In Appendix \ref{app:A-simhom}, we show that there is a canonical simplicial deformation retraction 
from $E_\bullet G$ onto the submanifold $M$.  In turn, this defines a homotopy operator for 
the de Rham complex of $E_\bullet G$. For $0\le i\le p$ let
\begin{equation}\label{eq:fullform2}h_{p,i}\colon E_pG\to E_{p+1}G,\ \ 
(a_0,\ldots,a_p)\mapsto (a_0,\ldots,a_i,m,\ldots,m),
\end{equation} 
with $p+1-i$ copies of $m=\sz(a_0)=\ldots=\sz(a_p)$. 
The homotopy operator  is given by 
\begin{equation}\label{eq:fullform1} h=\sum_{i=0}^{p-1} (-1)^{i+1} (h_{p-1,i})^*\colon \Om^q(E_pG)\to \Om^q(E_{p-1}G).
\end{equation}
Thus $h\delta+\delta h=\on{id}-\pi_\bullet^*\iota_\bullet^*$.
%On functions $f\in C^\infty(E_pG)$, the homotopy operator reads as 
%\[ (hf)(a_0,\ldots,a_{p-1})=-f(a_0,m,\ldots,m)+f(a_0,a_1,m,\ldots,m)+\ldots+(-1)^p f(a_0,\ldots,a_{p-1},m)\]
%
%with $m=\sz(a_0)=\ldots=\sz(a_p)$.
%
For any morphism of Lie groupoids $f\colon G_1\to G_2$, the pullback map $f^*\colon \Om(E_\bullet G_2)\to \Om(E_\bullet G_1)$ intertwines the homotopy operators. 
\begin{example}\label{ex:triv}
In particular, the inclusion $\iota\colon M\to G$, viewed as a morphism from 
$M\rra M$ to $G\rra  M$, satisfies $h\circ \iota_\bullet^*=\iota_\bullet^*\circ h$.
Note that the simplicial complex $(\Om(M_\bullet),\delta)$  is simply 
\[ \Om(M)\xrightarrow{0}\Om(M)\xrightarrow{\on{id}}\Om(M)\xrightarrow{0}\Om(M)\cdots;\]
i.e., $\delta$ is the identity in odd degrees $p>0$ and zero otherwise. The homotopy operator 
$h$ on this complex restricts to the identity in odd degrees $p>0$ and zero otherwise.

There is also a homotopy operator $k$ for the inclusion of $\Om(M)\hra \Om(M_\bullet)$ as the degree $0$ piece, 
with homotopy inverse the projection. The operator $k$ is the identity in even degrees $p>0$ and zero otherwise. 
\end{example}
\begin{proposition}\label{prop:homproperties}
The homotopy operator $h\colon \Om^\bullet(E_\bullet G)\to \Om^\bullet(E_{\bullet-1}G)$ has the following 
additional properties:
\begin{enumerate}
\item\label{it:1} $h\circ h=0$. 
\item\label{it:mod}
 $h$ is an $\Om(M)$-module morphism, in the sense that 
\[ h(\alpha\wedge \pi_p^*\beta)=h\alpha\wedge\pi_{p-1}^*\beta\]
for all $\alpha\in \Om(E_pG)$ and $\beta\in \Om(M)$.
\item\label{it:3} The homotopy operator is an $R$-twisted derivation, for the algebra morphism
$R=\pi_\bullet^*\circ \iota_\bullet^*$. That is, 
\[ h(\alpha\cup\alpha')=h\alpha\cup R\alpha'+(-1)^{|\alpha|}\alpha\cup h\alpha'\]
for $\alpha\in{\Om}^q(E_pG)$ and $\alpha'\in {\Omega}^{q'}(E_{p'}G)$. 
\item\label{it:4} The homotopy operator preserves the normalized subcomplex $\wt{\Om}(E_\bullet G)$. The composition 
$\iota_\bullet^*\circ h$ vanishes on the normalized subcomplex. 
\end{enumerate}
\end{proposition}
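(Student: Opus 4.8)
The plan is to reduce every one of the four assertions to an explicit identity among the maps $h_{p,i}$ of \eqref{eq:fullform2} and the structure maps $\pi_p,\iota_p,\eps_i$ together with the front/back face projections $\pr,\pr'$. Since $h$ is defined in \eqref{eq:fullform1} as an alternating sum of pullbacks $(h_{p-1,i})^*$, each claim follows once the relevant composition of an $h_{p,i}$ with these maps is computed; this is bookkeeping with the lists $(a_0,\ldots,a_p)$, the only content being the combinatorics of where the trailing copies of $m=\sz(a_0)=\cdots=\sz(a_p)$ land.

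For \eqref{it:mod} I would first record the identity $\pi_p\circ h_{p-1,i}=\pi_{p-1}$, valid for every $i$ because $h_{p-1,i}$ preserves the common source. Hence $(h_{p-1,i})^*\pi_p^*\beta=\pi_{p-1}^*\beta$ independently of $i$, and pulling the factored form $\alpha\wedge\pi_p^*\beta$ through each term of $h$ gives $h(\alpha\wedge\pi_p^*\beta)=h\alpha\wedge\pi_{p-1}^*\beta$ at once. For \eqref{it:1} the key is the quadratic relation
\[ h_{p-1,i}\circ h_{p-2,j}=h_{p-1,j+1}\circ h_{p-2,i}\qquad(i\le j),\]
which one checks directly from \eqref{eq:fullform2}. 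In the double sum for $h\circ h$ the term indexed $(i,j)$ carries the sign $(-1)^{i+j}$, so this relation identifies the term $(i,j)$ with $i\le j$ with the term $(j+1,i)$, whose sign is $(-1)^{i+j+1}$; since $(i,j)\mapsto(j+1,i)$ is a bijection from $\{i\le j\}$ onto $\{i>j\}$, all terms cancel in pairs and $h\circ h=0$.

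The substance of the proposition is \eqref{it:3}, and here the main obstacle is organizing the signs in the cup product \eqref{eq:cupp} across the splitting of the summation index. The geometric input is the behaviour of $h_{N-1,k}$, with $N=p+p'$, under the two face projections: taking the front $p+1$ entries one finds $\pr\circ h_{N-1,k}=h_{p-1,k}\circ\pr$ for $k\le p-1$ and $\pr\circ h_{N-1,k}=\pr$ (into $E_pG$) for $k\ge p$, while taking the back $p'+1$ entries one finds $\pr'\circ h_{N-1,k}=\iota_{p'}\circ\pi_{N-1}$ for $k\le p-1$ and $\pr'\circ h_{N-1,k}=h_{p'-1,k-p}\circ\pr'$ for $k\ge p$ (all projections being to the indicated degrees). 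I would then split the alternating sum for $h(\alpha\cup\alpha')$ at $k=p$. The block $k\le p-1$ reassembles, after pulling $h_{p-1,k}$ out of the front factor and recognizing $\pi_{N-1}^*\iota_{p'}^*\alpha'=(\pr')^*R\alpha'$ in the back factor, as $h\alpha\cup R\alpha'$. The block $k\ge p$ reassembles, after re-indexing by $l=k-p$, as $(-1)^{|\alpha|}\alpha\cup h\alpha'$; the sign $(-1)^{|\alpha|}=(-1)^{p+q}$ arises from combining the prefactor $(-1)^p$ produced by the shift with the change of the cup-product sign from $(-1)^{p'q}$ to $(-1)^{(p'-1)q}$ as the back simplicial degree drops from $p'$ to $p'-1$. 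The edge cases $p=0$ and $p'=0$, where one block is empty and the corresponding $h\alpha$ or $h\alpha'$ vanishes, are consistent with these conventions.

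Finally, for \eqref{it:4} I would show that each composite $h_{p-1,i}\circ\eps_j$ factors through a degeneracy into $E_pG$: from \eqref{eq:fullform2} one gets $h_{p-1,i}\circ\eps_j=\eps_{p-1}\circ h_{p-2,i}$ for $i\le j$ and $h_{p-1,i}\circ\eps_j=\eps_j\circ h_{p-2,i-1}$ for $i>j$. Consequently $\eps_j^*h\alpha$ is a sum of terms each pulling $\alpha$ back through some $\eps_k$, so it vanishes termwise for normalized $\alpha$; thus $h$ preserves $\wt\Om(E_\bullet G)$. For the last statement I would use $h_{p-1,i}\circ\iota_{p-1}=\iota_p$ together with $\iota_p=\eps_0\circ\iota_{p-1}$, which gives $\iota_{p-1}^*(h_{p-1,i})^*\alpha=\iota_p^*\alpha=\iota_{p-1}^*\eps_0^*\alpha=0$ for every $i$ whenever $\alpha$ is normalized; summing over $i$ yields $\iota_\bullet^*\circ h=0$ on the normalized subcomplex.
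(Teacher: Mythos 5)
Your proof is correct and follows essentially the same route as the paper's: part (2) via $\pi_p\circ h_{p-1,i}=\pi_{p-1}$, part (3) via the identical case split of $(h_{p+p'-1,i})^*(\alpha\cup\alpha')$ at $i=p$ with the same sign bookkeeping (your index shift $i-p$ is the correct one), and part (4) via factoring the composites through degeneracy maps. The only cosmetic difference is that you establish $h\circ h=0$ by an explicit pairwise cancellation of the terms $(h_{p-1,i}\circ h_{p-2,j})^*$, whereas the paper invokes duality with the homological counterpart (Proposition \ref{prop:fol}); the underlying combinatorial identity is the same.
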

\begin{proof}
Part \eqref{it:1} is obtained by duality to its homological counterpart (
Proposition \ref{prop:fol}). Part \eqref{it:mod} follows since
$\pi_p\circ h_{p,i}=\pi_{p-1}$, whence 
$h_{p,i}^*(\alpha\wedge \pi_p^*\beta)=h_{p,i}^*\alpha\wedge (\pi_{p-1})^*\beta$. For Part \eqref{it:3}, note that 
\[ (h_{p+p'-1,i})^*(\alpha\cup \alpha')=\begin{cases} (h_{p-1,i})^*\alpha\cup R\alpha' & i\le p-1\\
(-1)^q\alpha\cup (h_{p'-1,i-(p-1)})^*\alpha' & i>p-1\end{cases}
\]
where the sign comes from the sign convention for the cup product. Taking sum of these terms 
from $i=0$ to $i=p+p'-1$, with alternating sign $(-1)^{i+1}$, the sum from $i=0$ to  $i=p-1$ 
gives $h\alpha\cup R\alpha'$, while the sum from $i=p$ to $i=p+p'-1$ gives $(-1)^{p+q}\alpha\cup h\alpha'$.
As for Part \eqref{it:4}, it is clear that $h$ preserves the normalized subcomplex $\wt{\Om}(E_\bullet G)$. 
The composition $\iota_{p-1}^*\circ h=h\circ \iota_p^*$ vanishes on $\wt{\Om}(E_pG)$ with $p>0$ since $\iota_p^*$
vanishes there, and for $p=0$ since $h$ vanishes there. 
\end{proof}

\subsection{Van Est Double complex}
Let $T_\F E_pG=\ker(T\kappa_p)$ be the tangent bundle to the foliation $\F$ defined by the fibers of the 
principal bundle $\kappa_p\colon E_pG\to B_pG$. As for any principal groupoid bundle (see Section \ref{subsec:liefun}), we have isomorphisms 
\[ \pi_p^*A\cong A\ltimes E_pG\cong T_\F E_pG,\]
and the resulting map $A\ltimes E_pG\to A$ is a Lie algebroid morphism. In fact, 
$T_\F E_\bullet G$ is a simplicial Lie algebroid, and the map to $A$ is a morphism of 
simplicial Lie algebroids
\[ \hat{\pi}_\bullet\colon T_\F E_\bullet G\to A_\bullet,\]
where $A_p=A$ for all $p$ (with all simplicial structure maps the identity). 
%Similarly, the action groupoid $G\ltimes E_pG$  for the principal action comes with a morphism of Lie groupoids to $G$, compatible with the simplicial structure.
Following \cite{aba:wei,cra:dif} we define the \emph{Van Est double complex} 
\begin{equation}\label{eq:ved}
 \sC^{r,s}(T_\F EG):=\sC^s(T_\F E_rG),
 \end{equation}
with the simplicial differential $\delta$ of bidegree $(1,0)$ and the differential 
$\d=(-1)^r \d_{CE}$ of bidegree $(0,1)$; the extra sign is introduced so that $[\d,\delta]=\d \delta+\delta\d=0$. 
The space $\sC^\bullet(T_\F E_\bullet G)$ is a bidifferential algebra for the cup product
\begin{equation}\label{eq:cupo}
 \sC^{s}(T_\F E_rG)\otimes \sC^{s'}(T_\F E_{r'}G)\to \sC^{s+s'}(T_\F E_{r+r'}G)\end{equation}
defined by $\phi\cup\phi'=(-1)^{r's}\pr^*\phi\ (\pr')^*\phi'$, with the front face projection 
$\pr\colon E_{r+r'}G\to E_rG$ and the back face projection $\pr'\colon E_{r+r'}G\to E_{r'}G$. 
\begin{remark}
For any fixed $r$, the complex 
$\sC^{\bullet}(T_\F E_rG)$ with differential $\d_{CE}$ is the foliated de Rham complex $\Om^\bullet_{\ca{F}}(E_rG)$ 
for the fibration $\kappa_r\colon E_rG\to B_rG$.  
\end{remark}

Consider again the simplicial Lie algebroid $A_\bullet$. The corresponding bidifferential algebra has 
summands $\sC^s(A_r)=\sC^s(A)$; the simplicial differential $\delta$ vanishes on this summand when $r$ is even and is the identity map if $r$ is odd, while
$\d=(-1)^r\d_{CE}$ as before.  The map $\pi_r\colon E_rG\to M$ lifts to a morphism of simplicial Lie algebroids, 
$T_\F E_rG\to A$. 
Regard $C^\infty(B_\bullet G)$ as a bidifferential algebra concentrated in bidegrees $(\bullet,0)$.
We obtain a diagram
\[ \xymatrix{ {\sC^{\bullet}(T_\F E_\bullet G)} & {C^\infty(B_\bullet G)}\ar[l]_{\ \ \kappa_\bullet^*} \\
{\sC^{\bullet}(A_\bullet)} \ar[u]^{\pi_\bullet^*} &}
\]
where both maps are morphisms of bidifferential algebras.

\subsection{Definition of the Van Est map}\label{subsec:vanest}
The vector bundle morphism 
\begin{equation}\label{eq:inclusion} 
\xymatrix{ 
{T_\F E_rG}\ar[r] & E_rG\\
{A_r}\ar[u]^{}\ar[r] & M_r \ar[u]_{\iota_r}
}
\end{equation}
defines a morphism of bigraded 
spaces
\[ \iota_\bullet^*\colon \sC^{\bullet}(T_\F E_\bullet G)\to \sC^{\bullet}(A_\bullet),\]
which is right inverse to $\pi_\bullet^*$. This morphism intertwines $\delta$, but usually not $\d$ since 
\eqref{eq:inclusion}
is not a Lie algebroid morphism, in general.   Homological perturbation theory (Appendix \ref{app:perturb}) modifies this map, in such a way that it intertwines the total differentials $\d+\delta$. 

The construction uses a homotopy operator for the differential $\delta$. For any fixed $s$, the complex $\sC^{s}(T_\F E_\bullet G)$ is the simplicial complex of $E_\bullet G$ with coefficients in the simplicial vector bundle 
\[ \wedge^s T_\F^*E_\bullet G\cong \pi_\bullet^*\, \wedge^sA^*.\]
Since the maps $h_{r,i}\colon E_rG\to E_{r+1}G$ lift to vector bundle morphisms $T_\F E_rG=\pi_r^*A\to T_\F E_{r+1}G
=\pi_{r+1}^*A$,  we have a well-defined homotopy operator   with respect to the simplicial differential $\delta$
given once again by the formula \eqref{eq:fullform1}, 
$h=\sum_i (-1)^i (h_{r-1,i})^*$. On the dense subspace  
\begin{equation}\label{eq:incl}
 C^\infty(E_\bullet G)\otimes_{C^\infty(M)}\sC^s(A)\subseteq \sC^{s}(T_\F E_\bullet G),
\end{equation}
it acts as the given homotopy operator on $C^\infty(E_\bullet G)$, tensored with the identity operator on 
$\sC^s(A)$.

Both $\d \circ h$ and $h\circ  \d$ are operators of bidegree $(-1,1)$ on $\sC^{\bullet}(T_\F E_\bullet G)$.  Hence they 
are nilpotent operators of total degree $0$, and  $1+\d \circ h$ and $1+h \circ \d$ are invertible operators of total degree zero. 
The \emph{Perturbation Lemma} of homological algebra 
(cf.~ Lemma \ref{lem:B} in Appendix \ref{app:perturb}) gives the following statement:
\begin{lemma}
The map
\[ \iota_\bullet^*\circ (1+\d\circ h)^{-1}\colon \on{Tot}^\bullet \sC(T_\F EG)\to \on{Tot}^\bullet\sC(A)\]
is a cochain map for the total differential $\d+\delta$, and is a homotopy equivalence, with 
homotopy inverse $(1+h\circ \d)^{-1}\circ \pi^*_\bullet$. 
\end{lemma}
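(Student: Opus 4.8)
The plan is to read off the two maps as the output of the Perturbation Lemma (Lemma \ref{lem:B}) applied to a deformation retraction for the \emph{simplicial} differential $\delta$, with the Chevalley--Eilenberg differential $\d$ playing the role of the perturbation. Accordingly, I would first isolate the contraction data for $\delta$ alone. Take $\delta$ as the unperturbed differential on the large complex $\sC^\bullet(T_\F E_\bullet G)$, let $\pi_\bullet^*\colon \sC^\bullet(A_\bullet)\to \sC^\bullet(T_\F E_\bullet G)$ be the inclusion and $\iota_\bullet^*$ the projection; both intertwine $\delta$. The three identities required of contraction data are already on hand: $\iota_\bullet^*\circ \pi_\bullet^*=\on{id}$ (because $\pi_p\circ \iota_p=\on{id}_M$), the homotopy identity $h\delta+\delta h=\on{id}-\pi_\bullet^*\iota_\bullet^*$, and $h\circ h=0$ from Proposition \ref{prop:homproperties}. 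To match the sign convention of Lemma \ref{lem:B}, which asks for $\pi_\bullet^*\iota_\bullet^*-\on{id}=\delta\phi+\phi\delta$, one takes the contracting homotopy to be $\phi=-h$; this sign is exactly what converts the geometric series $\sum_n(\phi\d)^n$ and $\sum_n(\d\phi)^n$ into $(1+h\d)^{-1}$ and $(1+\d h)^{-1}$.

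Next I would check that $\d$ is an admissible perturbation. Since $\d$ has bidegree $(0,1)$ and $h$ has bidegree $(-1,0)$, the operators $\d h$ and $h\d$ have bidegree $(-1,1)$: they strictly lower the simplicial degree $r$, which is bounded below by $0$, so they are locally nilpotent and $1+\d h$, $1+h\d$ are invertible of total degree $0$. Feeding $\phi=-h$ and the perturbation $\d$ into Lemma \ref{lem:B} then produces perturbed contraction data whose projection and inclusion are $\iota_\bullet^*(1+\d h)^{-1}$ and $(1+h\d)^{-1}\pi_\bullet^*$ respectively, and these are mutually homotopy-inverse cochain maps for the perturbed total differential. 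This already matches the two maps in the statement.

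The one point that needs genuine work, and which I expect to be the main obstacle, is showing that the differential \emph{induced} on $\sC^\bullet(A_\bullet)$ is exactly the claimed total differential $\d+\delta$; a priori Lemma \ref{lem:B} only gives $\delta+\iota_\bullet^*\,\d\,(1+h\d)^{-1}\pi_\bullet^*$, with a potentially complicated correction term. Here I would use that $\pi_\bullet^*$ is a morphism of bidifferential algebras. Writing $h^A$ for the analogous homotopy on the constant simplicial object $A_\bullet$, functoriality of the contraction gives $h\pi_\bullet^*=\pi_\bullet^* h^A$, and $\pi_\bullet^*$ being an algebra morphism gives $\d\pi_\bullet^*=\pi_\bullet^*\d$. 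Combining these with $\iota_\bullet^*\pi_\bullet^*=\on{id}$ and the resolvent identity $\d(1+h\d)^{-1}=(1+\d h)^{-1}\d$, a term-by-term computation collapses the correction to $(1+\d h^A)^{-1}\d$; and since $\d h^A\d=0$ as an immediate consequence of $\d_{CE}^2=0$ (the operator $h^A$ being a scalar multiple of a degree shift that commutes with $\d_{CE}$), this equals $\d$. Thus the induced differential is $\delta+\d$, as required.

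Finally I would flag one caution governing which form of the Perturbation Lemma is invoked. On the full complex the side conditions $\iota_\bullet^* h=0$ and $h\pi_\bullet^*=0$ \emph{fail}: one computes $\iota_\bullet^* h=\big(\sum_{i=0}^{r-1}(-1)^{i+1}\big)\iota_\bullet^*$, which is nonzero in odd simplicial degrees, and similarly for $h\pi_\bullet^*$. These vanish only after restriction to the normalized subcomplex (Proposition \ref{prop:homproperties}). Hence for the present statement one relies on the version of Lemma \ref{lem:B} that yields a homotopy equivalence from contraction data without assuming the side conditions; they become relevant only for the multiplicative refinement establishing that $\on{VE}$ restricts to an algebra morphism on normalized cochains. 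Apart from this bookkeeping and the collapse of the correction term above, the proof is a direct reading of Lemma \ref{lem:B}.
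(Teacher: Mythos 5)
Your proof is correct and follows essentially the same route as the paper: the lemma is obtained by feeding the contraction data $(\pi_\bullet^*,\iota_\bullet^*,h)$ for the simplicial differential $\delta$, together with the perturbation $\d$ (admissible because $\d h$ and $h\d$ have bidegree $(-1,1)$ and hence are locally nilpotent), into the Basic Perturbation Lemma --- the paper's Lemma \ref{lem:A} rather than the multiplicative Lemma \ref{lem:B}, which is reserved for the normalized/algebra statement exactly as you note at the end. Your collapse of the induced differential on $\sC^\bullet(A_\bullet)$ back to $\d+\delta$, via $h\circ\pi_\bullet^*=\pi_\bullet^*\circ h^A$ with $h^A$ a scalar multiple of the degree shift and $\d h^A\d=0$, is precisely the verification (left implicit in the paper) of the hypothesis of part (2) of Lemma \ref{lem:A} that $h$ preserves the small complex and that the graded commutator $[\d,h]$ vanishes there, which is what upgrades part (1) to the asserted mutual homotopy inverse; your remark that the side conditions $\iota_\bullet^*h=0$ and $h\pi_\bullet^*=0$ fail on the full complex is likewise accurate.
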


Here $\on{Tot}^\bullet\sC(A)$ indicates the total complex of the double complex $\sC^\bullet(A_\bullet)$.
The inclusion $\sC^\bullet(A)\equiv \sC^{\bullet}(A_0)\subseteq\on{Tot}^\bullet \sC(A)$ 
is also a homotopy equivalence, with homotopy inverse the projection. (The corresponding homotopy operator 
$k\colon  \sC^{s}(A_r)\to \sC^{s}(A_{r-1})$ is the identity map for $r>0$ even, and $0$ otherwise - cf. Example \ref{ex:triv}.) 
By composing the two homotopy equivalences, and observing that 
$(1+h\circ \d)^{-1}\circ \pi^*_0=\pi_0^*$ (for degree reasons), we obtain:
\begin{proposition}\label{prop:hmk}
The map 
\[ \iota_0^*\circ  (1+\d\circ h)^{-1}\colon  \on{Tot}^\bullet\sC(T_\F EG)\to \sC^\bullet(A)\]
intertwines the total differential $\d+\delta$ with the Chevalley-Eilenberg differential. It is a homotopy equivalence, with homotopy inverse 
the map $\pi_0^*$.
\end{proposition}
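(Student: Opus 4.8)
The plan is to derive Proposition \ref{prop:hmk} by composing the homotopy equivalence of the preceding Lemma with a second, elementary homotopy equivalence that collapses the auxiliary double complex $\sC^\bullet(A_\bullet)$ onto its zeroth simplicial column $\sC^\bullet(A_0)\equiv\sC^\bullet(A)$. The Lemma already supplies the homotopy equivalence
\[ \iota_\bullet^*\circ(1+\d\circ h)^{-1}\colon \on{Tot}^\bullet\sC(T_\F EG)\to \on{Tot}^\bullet\sC(A),\]
with homotopy inverse $(1+h\circ\d)^{-1}\circ\pi_\bullet^*$, so the remaining work is to understand the target and then to chase the composition.

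First I would analyze the simplicial direction of $\sC^\bullet(A_\bullet)$. For each fixed $s$ the complex $(\sC^s(A_r),\delta)_r$ reads $\sC^s(A)\xrightarrow{0}\sC^s(A)\xrightarrow{\on{id}}\sC^s(A)\xrightarrow{0}\cdots$, exactly as in Example \ref{ex:triv}; its cohomology is concentrated in degree $r=0$. I would take the homotopy operator $k\colon\sC^s(A_r)\to\sC^s(A_{r-1})$ equal to the identity for $r>0$ even and zero otherwise, so that $\delta k+k\delta=\on{id}-P$, where $P$ is the idempotent projecting $\on{Tot}^\bullet\sC(A)$ onto its zeroth column $\sC^\bullet(A_0)$. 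This identifies the inclusion $\sC^\bullet(A)\hookrightarrow\on{Tot}^\bullet\sC(A)$ and the projection as mutually inverse homotopy equivalences in the simplicial direction.

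The step deserving care is to promote this to the \emph{total} differential $\d+\delta$. Here the sign convention $\d=(-1)^r\d_{CE}$ is essential: with it a short computation gives $\d k+k\d=0$, so that $k$ is simultaneously a homotopy operator for the total differential, yielding $(\d+\delta)k+k(\d+\delta)=\on{id}-P$. One also checks, using that $\delta$ vanishes on $\sC^\bullet(A_0)$ because $r=0$ is even, that both the inclusion and the projection are cochain maps intertwining $\d+\delta$ with $\d_{CE}$. I expect this sign bookkeeping to be the only genuine obstacle; the rest is formal.

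Finally I would compose the two homotopy equivalences. Projecting $\iota_\bullet^*$ onto the $r=0$ column returns $\iota_0^*$, so the composite is $\iota_0^*\circ(1+\d\circ h)^{-1}$, which is the map of the Proposition and is a homotopy equivalence as a composite of such. Its homotopy inverse is obtained by restricting $(1+h\circ\d)^{-1}\circ\pi_\bullet^*$ to $\sC^\bullet(A_0)$, namely $(1+h\circ\d)^{-1}\circ\pi_0^*$. To see that this is simply $\pi_0^*$, observe that $\pi_0^*$ takes values in bidegree $(0,\bullet)$, whereas $h\circ\d$ has bidegree $(-1,1)$ and therefore annihilates these values; hence $(1+h\circ\d)^{-1}\circ\pi_0^*=\pi_0^*$, completing the argument.
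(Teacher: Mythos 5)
Your proposal is correct and follows essentially the same route as the paper: compose the homotopy equivalence from the preceding Lemma with the elementary equivalence collapsing $\on{Tot}^\bullet\sC(A)$ onto its $r=0$ column (via the homotopy operator $k$ of Example \ref{ex:triv}), and then observe that $(1+h\circ\d)^{-1}\circ\pi_0^*=\pi_0^*$ for degree reasons. You merely spell out the sign checks (e.g.\ $\d k+k\d=0$) that the paper leaves implicit.
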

Here $\iota_0^*$ is regarded as a map on the full double complex, given by $0$ on 
$\sC^s(T_\F E_r G)$ with $r>0$, and similarly $\pi_0^*$ is viewed as a map into the full double complex. 
Composing with the cochain map 
\[ \kappa_\bullet^*\colon C^\infty(B_\bullet G)\to \sC^0(T_\F E_\bullet G)\subseteq \on{Tot}^\bullet\sC(T_\F EG)\]
we arrive at the following definition:
\begin{definition}
Let $G\rra M$ be a Lie groupoid, with Lie algebroid $A=\on{Lie}(G)$. 
The composition
\begin{equation}\label{eq:composition}
 \on{VE}=\iota_0^*\circ (1+\d \circ h)^{-1}\circ \kappa_\bullet^*\colon 
C^\infty(B_\bullet G)\to \sC^\bullet(A) 
 \end{equation}
 is called  the \emph{Van Est map}.
\end{definition}
By construction, $\on{VE}$ is a cochain map. We will verify in Section \ref{sec:wexu} that it coincides with Weinstein-Xu's definition of the Van Est map. 
\begin{remarks}
\begin{enumerate}
\item The map $\on{VE}$  is functorial:
Let $G_1\to G_2$ be a morphism of Lie groupoids,  
and let $A_1\to A_2$ be the corresponding morphism of Lie algebroids. 
From the construction of the Van Est map, it is immediate that the 
following diagram commutes:
\[ \begin{CD} 
{C^\infty(B_\bullet G_2)}
@>>> {C^\infty(B_\bullet G_1)} \\@V{\on{VE}}VV @VV{\on{VE}}V\\
\sC^\bullet(A_2) @>>> \sC^\bullet(A_1)
\end{CD}
\]
\item 
Since $\d\circ h$ has bidegree $(-1,1)$, the Van Est map has the following `zig-zag' form on elements $\phi\in C^\infty(B_pG)$:
\[
\on{VE}(\phi)=(-1)^p \iota_0^* \circ (\d\circ h)^p \circ \kappa_p^* \phi.
\]
\item 
The Van Est map can also be written 
\[ \on{VE}=\iota_0^*\circ (1+[h,\d])^{-1}\circ \kappa^*\]
because $(1+[h,\d])^{-1}=(1+\d h)^{-1}+\sum_{j=1}^\infty (h \d)^j$ and
 $\d \circ  \kappa^*=0$. This alternative form turns out to be easier to work with, since $[h,\d]$ is closer 
 to being a derivation.  
\item 
For $G$ a possibly local Lie groupoid, we can consider the differential algebra of germs $C^\infty(B_\bullet G)_M$. Using 
the double complex  
$\sC^{\bullet}(T_\F E_\bullet G)_M$ of germs along $M\subseteq EG$ one obtains a Van Est map 
\[ \on{VE}_M\colon C^\infty(B_\bullet G)_M\to \sC^\bullet(A).\]
For a global Lie groupoid, the map $\on{VE}$ factors as the natural projection $C^\infty(B_\bullet G)\to C^\infty(B_\bullet G)_M$ 
followed by $\on{VE}_M$.
\end{enumerate}
\end{remarks}

The Van Est map on the full complex of groupoid cochains
fails to be an algebra homomorphism, in general. However, 
it does respect products on the normalized subcomplex \cite[Proposition 6.2.3]{meh:sup}. 
\begin{theorem}\label{th:multi}
The Van Est map for the trivial $G$-module restricts to an algebra morphism 
$\on{VE}\colon \wt{C}^\infty(B_\bullet G)\to \sC^\bullet(A)$
on the normalized subcomplex. 
\end{theorem}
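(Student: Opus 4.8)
The plan is to deduce the statement from the multiplicative refinement of the Basic Perturbation Lemma due to Gugenheim--Lambe--Stasheff (Appendix~\ref{app:perturb}), by checking that its hypotheses, which fail on the full double complex, are restored after passing to the normalized subcomplex. Recall that $\on{VE}=\iota_0^*\circ(1+\d h)^{-1}\circ\kappa_\bullet^*$ arises from the strong deformation retraction of $(\on{Tot}^\bullet\sC(T_\F EG),\delta)$ onto $\sC^\bullet(A)$ (placed in simplicial degree $0$), whose data are the projection $P=\iota_0^*$, the inclusion $I=\pi_0^*$, and the homotopy $h$, perturbed by the Chevalley--Eilenberg differential $\d$. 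On the normalized subcomplex $\wt\sC(T_\F E_\bullet G)$, which is a subalgebra for the cup product and is preserved by $\delta$, $\d$ and $h$, this retraction is a contraction of differential graded algebras, and I would apply the algebra version of the perturbation lemma to it.

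The verification of the GLS hypotheses is where Proposition~\ref{prop:homproperties} does the work. First, the three side conditions hold on $\wt\sC(T_\F E_\bullet G)$: the identity $h\circ h=0$ is part~\ref{it:1}; the condition $Ph=\iota_0^* h=0$ is contained in part~\ref{it:4} (this is exactly $\iota_\bullet^* h=0$, valid only on normalized cochains, and is the step that breaks down on the full complex); and $hI=h\pi_0^*=0$ holds for degree reasons, since $\pi_0^*$ lands in simplicial degree $0$ while $h$ lowers simplicial degree by one. Second, $h$ is a twisted derivation for the cup product, with twist the idempotent $R=\pi_\bullet^*\iota_\bullet^*$ (part~\ref{it:3}); since on normalized cochains $\iota_r^*$ vanishes for $r>0$ (the maps $\iota_r$ factor through degeneracies, which annihilate normalized forms), this idempotent coincides with $IP=\pi_0^*\iota_0^*$, so $h$ is precisely an $IP$-derivation homotopy, the form required by GLS. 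Finally $P=\iota_0^*$ and $I=\pi_0^*$ are pullback maps, hence algebra morphisms, and the perturbation $\d=(-1)^r\d_{CE}$ is a graded derivation of the cup product.

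With these inputs, the GLS lemma yields that the perturbed projection $\iota_0^*\circ(1+\d h)^{-1}$ is an algebra morphism onto $(\sC^\bullet(A),\wedge)$; precomposing with the algebra morphism $\kappa_\bullet^*$, which carries $\wt C^\infty(B_\bullet G)$ into $\wt\sC^0(T_\F E_\bullet G)$, gives that $\on{VE}$ respects products. One bookkeeping point to dispatch along the way is that the composite retraction onto $\sC^\bullet(A)$ a priori also involves the homotopy $k$ of Example~\ref{ex:triv}; but on normalized cochains the term $\pi_\bullet^* k\,\iota_\bullet^*$ vanishes (again because $\iota_\bullet^*$ is concentrated in degree $0$, where $k=0$), so the composite homotopy reduces to $h$ and the formula for $\on{VE}$ is unchanged. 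The main obstacle I anticipate is conceptual rather than computational: one must confirm that the \emph{twisted}-derivation property of $h$---rather than an honest derivation property, which does not hold---is exactly what the multiplicative perturbation lemma requires, and correspondingly that it is the side condition $\iota_\bullet^* h=0$, restored precisely by normalization, that upgrades the transferred structure from a mere $A_\infty$-morphism to a strict algebra morphism.
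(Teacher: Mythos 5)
Your proposal is correct and follows essentially the same route as the paper: apply the Gugenheim--Lambe--Stasheff multiplicative perturbation lemma (Lemma~\ref{lem:B}), with the side conditions $h\circ h=0$, $\iota_\bullet^*\circ h=0$ and the $R$-derivation property supplied by Proposition~\ref{prop:homproperties} on the normalized subcomplex. The only ingredient you leave implicit is that these properties, established for $\wt{\Om}(E_\bullet G)$, transfer to $\wt{\sC}^\bullet(T_\F E_\bullet G)$ via the dense subspace $\wt{C}^\infty(E_rG)\otimes_{C^\infty(M)}\sC^\bullet(A)$, which is the last line of the paper's proof.
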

\begin{proof}
The compatibility of the homological perturbation theory with algebra structures is addressed in the work of 
Gugenheim-Lambe-Stasheff \cite{gug:per2} (see Appendix \ref{app:perturb}, Lemma \ref{lem:B}). To apply their result,  
we need to verify the  \emph{side conditions}
$h\circ h=0,\ \ \iota^*\circ h=0$ as well as the $R_0:=\pi_0^*\iota_0^*$-derivation property.
But these follow from Proposition \ref{prop:homproperties}, and since 
\[  \wt{C}^\infty(E_rG)\otimes_{C^\infty(M)}\sC^\bullet(A)\subseteq \wt{\sC}^{\bullet}(T_\F E_r G)\]
is a dense subspace.
\end{proof}

\subsection{Coefficients}
The theory described above admits a straightforward generalization to the case with coefficients. A module over a Lie algebroid  
$A\to M$ is a vector bundle $\mu\colon S\to M$, equipped with a \emph{linear} $A$-action. The linearity condition is the requirement that  
$A\ltimes S\to S$ is a \emph{VB-algebroid} \cite{gra:vba,mac:gen} over $A\to M$ (also called \emph{LA-vector bundle}). 
Equivalently,  $S$ comes equipped with a  flat $A$-connection $\nabla\colon \Gamma(S)\to \Gamma(A^*\otimes S)$, i.e. 
\[ \nabla_X(f\sigma)=f\nabla_X\sigma+(\a(X)f)\sigma,\ \ \ [\nabla_X,\nabla_Y]=\nabla_{[X,Y]}.\]
(For example, if $\ca{F}$ is a foliation on $M$, then a $T_\F M$-module is given by a vector bundle with a flat connection in 
the direction of the fibers.) One obtains a complex $\sC^\bullet(A,S)=\Gamma(\wedge^\bullet A^*\otimes S)$, with a differential $\d_{CE}$ 
given by a similar formula \eqref{eq:CEC} as before, replacing $\a(X)$ with $\nabla_X$.  Given another $A$-module $S'$, the 
wedge product  
\[ \sC^\bullet(A;S)\otimes \sC^\bullet(A;S')\to \sC^\bullet(A;S\otimes S'),\ \phi\otimes\phi'\mapsto \phi\wedge\phi'\]
is a morphism of differential spaces. 

Similarly, a module over a Lie groupoid $G\rra M$ is a vector bundle $\mu\colon S\to M$ with a \emph{linear} $G$-action along $\mu$, 
i.e. the action groupoid $G\ltimes S\rra S$ is a 
\emph{VB-groupoid} in the sense of Pradines \cite{mac:gen,pra:rem,gra:vb}. Equivalently, for 
any groupoid element $g\in G$ the map $S_{\sz(g)}\to S_{\tz(g)},\ v\mapsto g.v$
is linear. There is a similar definition of modules for local Lie groupoids. Any $G$-module becomes a $\on{Lie}(G)$-module for the 
infinitesimal action. 

Given a $G$-module $S\to M$, we obtain a simplicial vector bundle 
$B_\bullet(G\ltimes S)\to B_\bullet G$. We obtain a cochain complex of sections of this bundle, with the simplicial differential defined as before. (One can also consider the bigraded space of bundle-valued differential 
forms, but in order to define a second differential on this space one needs a $G$-invariant flat connection  on $S$; see
Section \ref{subsec:simpl1}.) 
\begin{remark}
The fiber of $B_p(G\ltimes S)$ at a $p$-arrow $(g_1,\ldots,g_p)$ 
(cf. \ref{eq:parrow}) consists of 
tuples $(v_0,\ldots,v_p)$  of elements $v_i\in S_{m_i}$, with $v_{i-1}=g_i.v_i$.   Any such tuple is determined by the element 
$v_p$; hence $ B_p(G\ltimes S)\cong B_pG\times_M S$. 
\end{remark}
Consider the $G$-equivariant simplicial vector bundle $E_\bullet(G\ltimes S)$.  
The common source map for elements of this bundle defines a vector bundle map onto $S$, with underlying 
map $\pi_p$. Thus $E_p(G\ltimes S)=\pi_p^* S$. 
On the other hand, the total space of $E_p(G\ltimes S)$ is a principal bundle over $B_p(G\ltimes S)$, and the quotient map 
identifies $E_p(G\ltimes S)=\kappa_p^* B_p(G\ltimes S)$. 

The vector bundle $\pi_p^* S=E_p(G\ltimes S)$ is a $T_\F E_pG$-module, hence a double complex 
$\sC^\bullet(T_\F E_\bullet G,\pi_\bullet^* S)$ is defined. By repeating the argument from the last section, we 
use the homotopy operator on this double complex to define the Van Est map 
\[ \on{VE}=\iota_0^*\circ (1+h\circ\d)^{-1}\circ \kappa^*\colon \Gamma(B_\bullet (G\ltimes S))\to \sC^\bullet(A,S).\]
Given two $G$-modules $S,S'\to M$ one obtains a commutative diagram for the normalized subcomplexes 
\[ \begin{CD} 
{\wt{\Gamma}(B_\bullet (G\ltimes S))  \otimes \wt{\Gamma}(B_\bullet (G\ltimes S'))} @>{\cup}>> \wt{\Gamma}(B_\bullet (G\ltimes (S\otimes S'))) \\@V{\on{VE}\otimes \on{VE}}VV @VV{\on{VE}}V\\
\sC^\bullet(A;S)\otimes \sC^\bullet(A;S') @>{\cup}>> \sC^\bullet(A;S\otimes S')
\end{CD}
\]
The argument is essentially the same as in the case of trivial coefficients, see Remark \ref{rem:gug}.

\section{The Weil algebroid}\label{sec:weil}
As discussed in Section \ref{subsec:liegr}, the groupoid cochain complex $C^\infty(B_\bullet G)=C^\infty(B_\bullet G)$ extends to the Bott-Shulman-Stasheff double complex $\Om^\bullet(B_\bullet G)$. To extend the Van Est map to this double 
complex, we need a description of the infinitesimal counterpart $\sW^{\bullet,\bullet}(A)$, the \emph{Weil algebra} of
a Lie algebroid $A$.  The definition of this algebra, and a construction of the corresponding Van Est map,  was given 
by Mehta \cite{meh:sup} and Weinstein (unpublished notes) in terms of super geometry, and by Abad-Crainic \cite{aba:wei} using their theory of \emph{representations up to homotopy}. The geometric model given below, as sections of a `Weil algebroid', may be seen as a
translation of Mehta-Weinstein's definition into ordinary differential geometry. 

\subsection{Koszul algebroids}\label{subsec:koszul}
Let $A\to M$ be any vector bundle. We will define a `Koszul algebroid' $W(A)$ as a module of K\"ahler differentials for the bundle of graded algebras $\wedge A^*$. 
Consider $\wedge A^*$ as a bundle of commutative graded algebras, and let  
\begin{equation}
 \mf{der}(\wedge A^*)=\bigoplus_{i\in \Z}\mf{der}^i(\wedge A^*)
\end{equation}
be the graded  vector bundle over $M$ whose sections are the
graded derivations of $\Gamma(\wedge A^*)$.  Its fiber 
$\mf{der}(\wedge A^*)_m$ at $m\in M$ is the space of graded derivations $D_m\colon \Gamma(\wedge A^*)\to \wedge A^*_m$ 
of the graded $\Gamma(\wedge A^*)$-module $\wedge A^*_m$. Since $\wedge A^*$
is graded commutative, the bundle $\mf{der}(\wedge A^*)$ is a graded $\wedge A^*$-module.

\begin{proposition}
There is a short exact sequence of graded $\wedge A^*$-modules
\begin{equation}\label{eq:exact2} 
0\to \wedge A^*\otimes A\to \mf{der}(\wedge A^*)\to \wedge A^*\otimes TM\to 0.
\end{equation}
Here the second factor in $\wedge A^*\otimes A$ has degree $-1$, while the second factor in $\wedge A^*\otimes TM$ has degree $0$. 
\end{proposition}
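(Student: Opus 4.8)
The plan is to exhibit the two maps explicitly and then check exactness fiber by fiber, reducing everything to linear algebra via a local frame of $A$. The central observation is that, since $\Gamma(\wedge A^*)$ is generated as a graded $C^\infty(M)$-algebra by $C^\infty(M)$ in degree $0$ together with $\Gamma(A^*)$ in degree $1$, a graded derivation $D_m\in\mf{der}(\wedge A^*)_m$ is completely determined by its two restrictions $D_m|_{C^\infty(M)}$ and $D_m|_{\Gamma(A^*)}$; conversely these restrictions may be prescribed arbitrarily (subject only to the graded Leibniz rule), because $\wedge A^*$ is the free graded-commutative $C^\infty(M)$-algebra on $\Gamma(A^*)$, so no further relations need to be checked.

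First I would define the inclusion $\wedge A^*\otimes A\to\mf{der}(\wedge A^*)$ by $\omega\otimes X\mapsto \omega\wedge\iota_X$, where $\iota_X$ is the degree $-1$ contraction operator; since the factor $A$ sits in degree $-1$, the element $\omega\otimes X$ with $\omega\in\wedge^k A^*$ has total degree $k-1$, matching the degree $k-1$ of the derivation $\omega\wedge\iota_X$, and the map is manifestly $\wedge A^*$-linear. Next I would define the symbol (anchor) projection $\sigma\colon\mf{der}(\wedge A^*)\to\wedge A^*\otimes TM$ by restriction to functions, $D_m\mapsto D_m|_{C^\infty(M)}$, using the canonical identification $\on{Der}_m(C^\infty(M),\wedge^i A^*_m)=\wedge^i A^*_m\otimes T_mM$ of point-derivations valued in a fiber; this is again $\wedge A^*$-linear and degree-preserving, with $TM$ in degree $0$.

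Exactness is then essentially formal. The composite $\sigma\circ(\omega\wedge\iota_X)$ vanishes because $\iota_X$ kills $C^\infty(M)$. Injectivity of the first map and exactness in the middle both come from the identification, for a derivation $D_m$ with $\sigma(D_m)=0$ (equivalently $D_m$ being $C^\infty(M)$-linear), of its restriction $D_m|_{\Gamma(A^*)}$ with an element of $\Hom(A^*_m,\wedge A^*_m)\cong\wedge A^*_m\otimes A_m$, which is exactly the data of a sum $\sum_a\omega_a\wedge\iota_{X_a}$. Choosing a frame of $A^*_m$ makes both the injectivity and the statement $\ker\sigma=\im$ a one-line computation. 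For surjectivity of $\sigma$ I would invoke a linear connection $\nabla$ on $A$: given $\xi\otimes v$, the derivation $\xi\wedge\nabla_v$ (covariant derivative in the direction $v$, extended as a graded derivation and left-multiplied by $\xi$) has symbol precisely $\xi\otimes v$.

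The step I expect to require the most care is not the exactness but the verification that $\mf{der}(\wedge A^*)$, defined pointwise, is an honest graded vector bundle and that the two maps are smooth bundle maps of constant rank. This I would settle by working in a local frame $e^1,\dots,e^n$ of $A^*$ and local coordinates $x^j$ on $M$: a derivation is then determined by the $\wedge A^*$-valued quantities $D(x^j)$ and $D(e^i)$, which gives an explicit local trivialization in which the inclusion, the projection $\sigma$, and the connection-induced splitting are all manifestly smooth and of the asserted ranks. Exactness of the resulting sequence of vector bundles then follows at once from the fiberwise exactness established above.
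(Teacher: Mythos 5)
Your proposal is correct and follows essentially the same route as the paper: the projection is restriction of a derivation to $C^\infty(M)$ (the paper phrases it as the pairing $D_m\otimes (\d f)_m\mapsto D_m(f)$), the kernel is identified with $\wedge A^*\otimes A$ acting by contractions, and surjectivity is obtained from the splitting induced by a vector bundle connection on $A$. The only additions are your explicit verification of the local bundle structure and the freeness argument for prescribing a derivation on generators, which the paper leaves implicit.
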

\begin{proof}
Any derivation $D_m\in \mf{der}(\wedge A^*)_m$ is determined by its restriction to the degree $0$ and degree $1$ components of 
$\Gamma(\wedge A^*)$. There is a bundle map $\mf{der}(\wedge A^*)\otimes T^*M\to \wedge A^*$, taking $D_m\otimes (\d f)_m$ to 
$D_m(f)$ for $f\in C^\infty(M)$. This is well-defined, since $D_m(f)$ vanishes if $f$ is constant, by the derivation property. We may also regard this as a map 
\begin{equation}\label{eq:map2}\mf{der}(\wedge A^*)\to \wedge A^*\otimes TM.
\end{equation}
By construction, the kernel of  \eqref{eq:map2} at $m\in M$ is the subspace of 
derivations $D_m$ such that $D_m(f)=0$ for all  $f\in C^\infty(M)=\Gamma(\wedge^0 A^*)$. 
But this subspace is exactly $\wedge A^*_m\otimes A_m=\mf{der}(\wedge A^*_m)\subseteq \mf{der}(\wedge A^*)_m$, 
 where the factor $A_m$ corresponds to `contractions'. This defines an injective bundle morphism 
$\wedge A^*\otimes A\to \mf{der}(\wedge A^*)$ 
whose image is the kernel of \eqref{eq:map2}. For surjectivity of the $\wedge A^*$-module morphism \eqref{eq:map2}, it is enough to show surjectivity of the map 
$\mf{der}^0(\wedge A^*)\to TM$.  But any choice of a vector bundle connection on $A$ defines a splitting of this map. 
\end{proof}
\begin{remark}
We see in particular that $\mf{der}^i(\wedge A^*)$ vanishes for $i<-1$, and for $i=-1$ coincides with $A$, acting by contractions.  
In degree $i=0$ we obtain 
 the \emph{Atiyah algebroid} $\mf{aut}(A)$ of infinitesimal vector bundle automorphisms of $A$ (or equivalently of $A^*$), and the sequence \eqref{eq:exact2} becomes the usual exact sequence 
$0\to A^*\otimes A\to \mf{aut}(A)\to TM\to 0$ for the Atiyah algebroid.
\end{remark}

 Thinking of $\mf{der}(\wedge A^*)$ as a generalization
of the tangent bundle (to which it reduces if $\on{rank}(A)=0$), the
corresponding `cotangent bundle' is the graded $\wedge
A^*$-module
\begin{equation}\label{eq:om1} \Om^1_{\wedge A^*}=\Hom_{\wedge A^*}(\mf{der}({\wedge A^*}),{\wedge A^*})\end{equation}
of \emph{K\"ahler differentials}. Dual to \eqref{eq:exact2}, we obtain an exact sequence of graded $\wedge A^*$-modules
\[ 0\to \wedge A^*\otimes T^*M\to \Om^1_{\wedge A^*}\to \wedge A^*\otimes A^*\to 0.\]
Here the second factor in $\wedge A^*\otimes T^*M$ has degree $0$ while the second factor in 
$\wedge A^*\otimes A^*$ has degree $1$.
More generally, we define a \emph{module 
of K\"ahler q-forms} $\Om^q_{\wedge A^*}$ to be the  
$q$-th exterior power (taken over $\wedge A^*$). That is, $\Om^q_{\wedge A^*}$ consists of 
graded  bundle maps 
\begin{equation}\label{eq:skew} \mf{der}({\wedge A^*})\times\cdots \times \mf{der}({\wedge A^*})\to {\wedge A^*}
\end{equation}
(with $q$ factors) that are ${\wedge A^*}$-linear in each entry and skew-symmetric in the graded sense. For $q=0$ we put $\Om^0_{\wedge A^*}={\wedge A^*}$. 
Each $\Om^q_{\wedge A^*}$ is a graded $\wedge A^*$-module, with summands
\[ W^{p,q}(A):=(\Om^q_{\wedge A^*})^p\]
the $q$-linear maps \eqref{eq:skew} raising the total degree by $p$. The `wedge product' 
$\wedge \Om^q_{\wedge A^*}\otimes \Om^{q'}_{\wedge A^*}\to \Om^{q+q'}_{\wedge A^*}$ is 
compatible with these gradings, thus $W(A)=\Om_{\wedge A^*}$ is a bundle of bigraded algebras. 
We will denote by 
\[ \sW^{\bullet,\bullet}(A):=\Gamma(W^{\bullet,\bullet}(A))\]
the bigraded algebra of sections. From its interpretation as `differential forms', it is clear that this algebra has an exterior differential:
\begin{proposition}
The algebra $\sW^{\bullet,\bullet}(A)$ has a unique derivation $\d_K$ of bidegree $(0,1)$ such that $\d_K\circ \d_K=0$ and such that 
for all $\phi\in \Gamma(\wedge A^*)$ and all $D\in \Gamma(\mf{der}(\wedge A^*))$, 
\[ (\d_K \phi)(D)=D(\phi).\]
\end{proposition}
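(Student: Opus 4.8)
The plan is to recognize $\d_K$ as the de Rham, or Chevalley--Eilenberg, differential attached to the graded Lie--Rinehart pair $\big(\Gamma(\wedge A^*),\,\Gamma(\mf{der}(\wedge A^*))\big)$: the graded commutator makes $\mf{der}(\wedge A^*)$ a sheaf of graded Lie algebras acting on $\wedge A^*$ by derivations, and $\Om^q_{\wedge A^*}$ is exactly its space of $q$-cochains. From this point of view the statement is the standard fact that such a complex carries a unique square-zero differential of degree $+1$ (here of bidegree $(0,1)$, since the construction leaves the $p$-grading untouched) normalized by its action on $0$-cochains.

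First I would establish uniqueness. As an algebra over $\R$, $\sW^{\bullet,\bullet}(A)$ is generated by its $q=0$ part $\sW^{\bullet,0}(A)=\Gamma(\wedge A^*)$ together with the image $\d_K\,\Gamma(\wedge A^*)$: indeed, $\Om_{\wedge A^*}$ is the exterior algebra over $\wedge A^*$ on $\Om^1_{\wedge A^*}$, and $\Om^1_{\wedge A^*}$ is generated as a $\wedge A^*$-module by exact Kähler differentials $\d_K\phi$. Using a connection on $A$ to produce local frames, this last claim reduces to the observation that, in terms of local coordinates $x^i$ on $M$ and a local frame $e^a$ of $A^*$, the forms $\d_K x^i$ and $\d_K e^a$ generate $\Om^1_{\wedge A^*}$ (matching the two pieces of the exact sequence $0\to\wedge A^*\otimes T^*M\to\Om^1_{\wedge A^*}\to\wedge A^*\otimes A^*\to 0$). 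Now the condition $(\d_K\phi)(D)=D(\phi)$ pins down $\d_K$ on $\Gamma(\wedge A^*)$, since by definition $\d_K\phi\in\Om^1_{\wedge A^*}$ is the element pairing with every $D\in\Gamma(\mf{der}(\wedge A^*))$ to $D(\phi)$. On a generator the Leibniz rule together with $\d_K(\d_K\phi_i)=0$ forces $\d_K\big(\phi_0\,\d_K\phi_1\wedge\cdots\wedge\d_K\phi_q\big)=\d_K\phi_0\wedge\d_K\phi_1\wedge\cdots\wedge\d_K\phi_q$, so any derivation meeting both requirements agrees with $\d_K$ everywhere.

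For existence I would give the Koszul--Cartan formula directly: for $\om\in\sW^{p,q}(A)$ and $D_0,\ldots,D_q\in\Gamma(\mf{der}(\wedge A^*))$,
\[
(\d_K\om)(D_0,\ldots,D_q)=\sum_i \pm\,D_i\big(\om(D_0,\ldots,\widehat{D_i},\ldots,D_q)\big)+\sum_{i<j}\pm\,\om\big([D_i,D_j],D_0,\ldots,\widehat{D_i},\ldots,\widehat{D_j},\ldots,D_q\big),
\]
the signs being dictated by the Koszul rule applied to the internal degrees of the $D_i$ and by the graded skew-symmetrization. I would then verify in turn that (i) the right-hand side is $\wedge A^*$-multilinear and graded skew in the $D_i$, so that $\d_K\om\in\Om^{q+1}_{\wedge A^*}$; (ii) $\d_K$ is a graded derivation for the wedge product of bidegree $(0,1)$; (iii) $\d_K\circ\d_K=0$; and (iv) for $q=0$ the formula reduces to $(\d_K\phi)(D_0)=D_0(\phi)$, which is the normalization. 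Step (iii) is the graded Jacobi identity for the commutator bracket on $\mf{der}(\wedge A^*)$ together with $[D,D']=DD'-(-1)^{|D||D'|}D'D$, exactly as in the classical proof that $d^2=0$.

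The main obstacle is the sign bookkeeping in step (i), that is, checking well-definedness. The single terms $D_i\big(\om(\cdots)\big)$ fail to be $\wedge A^*$-linear in the $D_i$ because $D_i$ differentiates the $\wedge A^*$-coefficient; the defect is precisely cancelled by the bracket terms, since $[D_i,fD_j]=f[D_i,D_j]+D_i(f)D_j$ for $f\in\Gamma(\wedge A^*)$ up to the appropriate Koszul sign. Carrying this cancellation through with the grading of $\mf{der}(\wedge A^*)$ present, and confirming graded skew-symmetry under transposition of two adjacent arguments, is the delicate computation; everything else is formal. Alternatively, once uniqueness and the local generators are in hand, existence can be obtained by defining $\d_K$ on the generators via the connection-induced splitting and checking that the relations are respected, but the invariant Koszul formula has the advantage of avoiding any appeal to the choice of connection.
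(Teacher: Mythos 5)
Your proof is correct and takes essentially the route the paper intends: the paper offers no written argument, asserting only that the result is ``clear from its interpretation as differential forms,'' and your two steps --- uniqueness from the normalization on $\Gamma(\wedge A^*)$ combined with the fact that $\sW(A)$ is generated by $\Gamma(\wedge A^*)$ together with its exact differentials (which the paper records separately as the spanning statement in item (d)), and existence via the graded Koszul--Cartan formula for the pair $\bigl(\Gamma(\wedge A^*),\Gamma(\mf{der}(\wedge A^*))\bigr)$ --- are exactly the standard construction that remark gestures at. I see no gaps.
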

\begin{definition}
The bigraded algebra $\sW(A)$ with the differential $\d_K$ will be called the \emph{Koszul algebra} of the vector bundle $A\to M$.
\end{definition}
We list some properties and special cases of this construction. 
\begin{enumerate}
\item[a)] Suppose $M=\pt$, so that $A=V$ is a vector space. Then 
$\mf{der}(\wedge V^*)=\wedge V^*\otimes V$, 
where elements of $V=\mf{der}^{-1}(\wedge V^*)$ acts as contractions. Dualizing, 
$\Om^1_{\wedge V^*}=V^*\otimes \wedge V^*$ where 
the elements of the first factor $V^*$ have bidegree $(1,1)$, and more generally
$\Om^q_{\wedge V^*}=S^qV^*\otimes \wedge V^*$
where the elements of $S^q V^*$ have bidegree $(q,q)$. It follows that 
\[ W^{p,q}(V)=S^q V^*\otimes\wedge^{p-q} V^*.\]
The differential $\d_K$ takes generators of $\wedge ^1V^*$ to the corresponding generators of $S^1V^*$; it hence coincides with the standard Koszul differential. 
\item[b)] At the other extreme, if $A=M\times\{0\}$ is the zero vector bundle over $M$, then $\mf{der}(\wedge A^*)=TM$ 
is the tangent bundle, and $\Om^q_{\wedge A^*}=\wedge^q T^*M$. Hence $W^{p,q}(A)$ is zero for $p>0$, while 
$W^{0,q}(A)=\wedge^q T^*M$. 
\item[c)] For a direct product of vector bundles $A_1\to M_1$ and $A_2\to M_2$, one has 
\[ W(A_1\times A_2)=W(A_1)\boxtimes W(A_2)\]
(exterior tensor product of graded algebra bundles) with the sum of the differentials on the two factors. As a special case, if 
$A=M\times V$ is a trivial vector bundle, then 
\[
 W^{p,q}(M\times V)=\bigoplus_i \wedge^i T^*M\otimes S^{q-i} V^*\otimes \wedge^{p-q+i} V^*.
\]
For a general vector bundle $A$, since $W(A)|_U=W(A|_U)$ for open subsets $U\subseteq M$, this gives a description of $W(A)$ in terms of local trivializations. 
%\item[d)]
%Rather than working with local trivializations, 
%one may use a linear connection $\nabla$ on $A$ to define a splitting $\mf{der}(\wedge A^*)=(TM\oplus A)\otimes \wedge A^*$
%of the exact sequence \eqref{eq:exact2} of $\wedge A^*$-modules. This then gives $\Om^1_{\wedge A^*}=(T^*M\oplus A^*)\otimes %\wedge A^*$, 
%and a decomposition
%\[ W(A)=\wedge T^*M\otimes SA^*\otimes \wedge A^*\] 
%
%depending on the choice of connection. 
\item[d)] For any vector bundle $A\to M$, one has $W^{p,0}(A)=\wedge^p A^*$ while $W^{0,q}(A)=\wedge^q T^*M$. 
The space $\mathsf{W}^{p,q}(A)=\Gamma(W^{p,q}(A))$ is spanned by elements of the form 
\begin{equation} \label{eq:spanning} \psi_0\ \d_K\psi_1\cdots \d_K\psi_q\end{equation}
with sections $\psi_i\in \Gamma(\wedge^{p_i} A^*)$ satisfying $p_0+\ldots+p_q=p$. (This follows, e.g., by considering local trivializations as above.)
\item[e)]  Any morphism $A'\to A$ of vector bundles over $M$ induces a morphism of bigraded algebra bundles 
$W(A)\to W(A')$ compatible with the $\wedge T^*M$ module structure. The map on sections 
$\sW(A)\to \sW(A')$ is a cochain maps with respect to $\d_K$.
\item[g)] Let $f\colon N\to M$ be a smooth map. For any vector bundle $A\to M$, the algebra bundles $W(f^*A)$ and 
$f^*W(A)$ are related by `change of coefficients': 
\[ W(f^*A)=\wedge T^*N\otimes_{f^*\wedge T^*M} f^*W(A).\]
Thus, on the level of sections we have an inclusion $\Om(N)\otimes_{\Om(M)}\sW(A)\hra \sW(f^*A)$ 
with dense image. More generally, for any morphism of vector bundles $A_1\to A_2$ with underlying map 
$f\colon M_1\to M_2$ we obtain a morphism $f^*\colon \sW(A_2)\to \sW(A_1)$.
%
%\item[f)] For a direct sum of two vector bundles $A_1\to M$ and $A_2\to M$, 
%\[ W(A_1\oplus A_2)=W(A_1)\otimes_{\wedge T^*M} W(A_2).\]
%
%On the level of sections, this defines an inclusion 
%$\sW(A_1)\otimes_{\Om(M)}\sW(A_2)\hra \sW(A_1\oplus A_2)$ with dense image.
\end{enumerate}
The morphisms
\begin{equation}\label{eq:inclproj}
 i\colon \Om(M)\to \mathsf{W}(A),\ \ \ \pi\colon \mathsf{W}(A)\to \Om(M),
\end{equation}
induced by the projection $A\to M$ and the inclusion $M\to A$,  respectively, may be regarded as the inclusion and projection onto 
the subcomplex $W^{0,\bullet}(A)\cong \wedge^\bullet T^*M$.  
\begin{proposition}\label{prop:koszul}
The inclusion and projection \eqref{eq:inclproj}
are homotopy inverses with
respect to $\d_K$. In particular, the cohomology of $(\on{Tot}^\bullet\mathsf{W}(A),\d_K)$ is
canonically isomorphic to 
the de Rham cohomology of $M$.
\end{proposition}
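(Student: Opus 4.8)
The plan is to show that $\pi$ and $i$ are mutually inverse homotopy equivalences for $\d_K$. The composite $\pi\circ i=\on{id}_{\Om(M)}$ is immediate: both maps come from the vector bundle morphisms $0_M\to A$ and $A\to 0_M$ (recall $\Om(M)=\sW(0_M)$ by item (b)), whose composite is the identity, and both are $\d_K$-cochain maps by item (e). The real content is a cochain homotopy $s$ of bidegree $(0,-1)$ with $\d_K\,s+s\,\d_K=\on{id}-i\circ\pi$. Since $\d_K$ preserves the fiber degree $p$, the complex $(\sW(A),\d_K)$ splits as the direct sum over $p$ of the complexes $(\sW^{p,\bullet}(A),\d_K)$, and $i\circ\pi$ is exactly the projection onto the summand $W^{0,\bullet}(A)=\wedge^\bullet T^*M$, on which $\d_K$ restricts to the de Rham differential. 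Hence it suffices to contract each complex $(\sW^{p,\bullet}(A),\d_K)$ with $p>0$, i.e.\ to produce $s$ with $\d_K\,s+s\,\d_K=\on{id}$ there and $s=0$ on $W^{0,\bullet}(A)$.

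First I would introduce the canonical \emph{Euler derivation} $N\in\Gamma(\mf{der}^0(\wedge A^*))$, acting on $\Gamma(\wedge^k A^*)$ as multiplication by $k$ (equivalently, the image of $\on{id}_A$ under $\End(A)\hra \mf{aut}(A)=\mf{der}^0(\wedge A^*)$). Contraction with $N$ gives an operator $\iota_N\colon \sW^{p,q}(A)\to \sW^{p,q-1}(A)$, $(\iota_N\alpha)(D_1,\ldots,D_{q-1})=\alpha(N,D_1,\ldots,D_{q-1})$, of bidegree $(0,-1)$, which like $\d_K$ is an antiderivation of $\sW(A)$. The key step is that the anticommutator $L_N:=\d_K\,\iota_N+\iota_N\,\d_K$ acts as multiplication by the fiber degree $p$ on $\sW^{p,\bullet}(A)$. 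Being the anticommutator of two antiderivations, $L_N$ is a derivation of bidegree $(0,0)$, so by the spanning statement of item (d) it is enough to evaluate it on the generators $f\in C^\infty(M)$, $\psi\in\Gamma(\wedge^k A^*)$ and their differentials. Here $\iota_N f=0$, $\iota_N\psi=0$, and $\iota_N\,\d_K\psi=(\d_K\psi)(N)=N(\psi)=k\,\psi$, so that $L_N f=0$, $L_N\psi=k\,\psi$, $L_N\,\d_K f=0$, and $L_N\,\d_K\psi=\d_K(L_N\psi)=k\,\d_K\psi$. Consequently a spanning element $\psi_0\,\d_K\psi_1\cdots\d_K\psi_q$ with $\psi_i\in\Gamma(\wedge^{p_i}A^*)$, which has fiber degree $p=\sum_i p_i$, is an eigenvector of the derivation $L_N$ with eigenvalue $\sum_i p_i=p$; thus $L_N=p\cdot\on{id}$ on $\sW^{p,\bullet}(A)$.

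With this in hand I would define $s$ to act as $\tfrac1p\,\iota_N$ on $\sW^{p,\bullet}(A)$ for $p>0$ and as $0$ on $W^{0,\bullet}(A)$; this is well defined since $p$ is the locally constant fiber-degree and $\iota_N$ preserves it. Then $\d_K\,s+s\,\d_K=\on{id}$ on the summands with $p>0$ and vanishes on $W^{0,\bullet}(A)$, i.e.\ $\d_K\,s+s\,\d_K=\on{id}-i\circ\pi$. This exhibits $i$ and $\pi$ as mutually inverse homotopy equivalences, and identifies the cohomology of $(\on{Tot}^\bullet\sW(A),\d_K)$ with $H^\bullet_{\mathrm{dR}}(M)$. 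The one genuinely technical point — the main obstacle — is setting up the contraction/Cartan calculus on the K\"ahler forms $\sW(A)$ carefully enough to justify that $\iota_N$ is an antiderivation and hence that $L_N$ is a derivation; once that formal structure is in place, the proposition reduces to the elementary eigenvalue computation above.
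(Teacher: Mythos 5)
Your argument is correct, but it proves the homotopy equivalence by a genuinely different mechanism than the paper. The paper's proof is the ``integral'' version of the Poincar\'e lemma: it forms $\sW(A\times\R)=\sW(A)\boxtimes\wedge T^*\R$, pulls back along the scaling $(v,t)\mapsto tv$, and integrates over $[0,1]$ to produce a homotopy operator $J\circ F$ interpolating between $\on{id}$ and $i\circ\pi$. Your proof is the infinitesimal version of the same idea: the Euler element $N=\on{id}_A\in A^*\otimes A\subseteq\mf{der}^0(\wedge A^*)$ is the generator of that scaling action, and the Cartan identity $[j(N),\d_K]=l(N)$ together with the eigenvalue computation $l(N)=p\cdot\on{id}$ on $\sW^{p,\bullet}(A)$ lets you write the homotopy in closed form as $s=\tfrac1p\,j(N)$ on the summands with $p>0$. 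Your eigenvalue computation is sound: $l(N)$ is $C^\infty(M)$-linear, a derivation, and commutes with $\d_K$, so checking it on the spanning elements $\psi_0\,\d_K\psi_1\cdots\d_K\psi_q$ of item (d) suffices, and each such element has eigenvalue $p_0+\cdots+p_q=p$. What your route buys is an explicit, purely algebraic contraction with no auxiliary complex $\sW(A\times\R)$ and no integration; what it costs is that you must invoke the contraction/Lie-derivative calculus $j(D),\,l(D)$ and the relation $[j(D),\d_K]=l(D)$, which the paper only sets up in the subsection \emph{after} this proposition (this is harmless, since that material is independent of the proposition, but you should either cite it forward or verify that $j(N)$ is a well-defined odd derivation before using it), and the division by $p$ ties the argument to the weight-space decomposition in a way that the paper's homotopy method avoids. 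Both proofs are complete; yours is arguably the more elementary once the Cartan calculus is granted.
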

\begin{proof}
View $A\times\R$ as the direct product of $A$ with the zero vector bundle $\R\times\{0\}$ over $\R$; 
thus $W(A\times \R)=W(A)\boxtimes \wedge T^*\R$. The space $\sW(A\times \R)=\Gamma(W(A\times\R))$ 
may be regarded as differential forms on $\R$ with values in $\sW(A)$. For 
all $s\in\R$ we have morphisms of bigraded algebras $\on{ev}_s\colon \sW(A\times\R)\to \sW(A)$ induced by the 
bundle map $A\to A\times\R,\ v\mapsto (v,s)$. Integration over the unit interval $[0,1]\subset\R$ defines a map
\[ J\colon \sW^{\bullet,\bullet}(A\times\R)\to \sW^{\bullet,\bullet-1}(A)\]
with the homotopy property (Stokes' theorem) 
\[ J\circ \d_K+\d_K\circ J=\on{ev}_1-\on{ev}_0.\]
The bundle map $A\times \R\to A,\ \ (v,t)\mapsto tv$
defines a morphism of bigraded algebras $F\colon \sW(A)\to \sW(A\times\R)$, with 
\[ \on{ev}_1\circ F=\on{id}_{\sW(A)},\ \ \ \ \on{ev}_0\circ F=i\circ \pi.\]
Since $F$ and the maps $\on{ev}_s$
commute with the differential $\d_K$,
it follows that the composition $J\circ F\colon \sW^{\bullet,\bullet}(A)\to \sW^{\bullet,\bullet-1}(A)$ is a homotopy operator between these two maps:
\[ J\circ F\circ \d_K+\d_K\circ J\circ F=\on{id}_{\sW(A)}-i\circ \pi.\]
(For a more detailed discussion, see e.g., \cite[Section 6.3]{me:clifbook}.) 
%Let $\R[t,\d t]$ be the differential algebra of polynomial differential forms on $\R$. There is a unique morphism of differential algebras
% \[ F\colon \mathsf{W}(A)\to \R[t,\d t]\otimes \mathsf{W}(A)\]
% such that, for $\phi\in \Gamma(\wedge^p A^*)=\mathsf{W}^{p,0}(A)$, $ F(\phi)=t^p\phi$. (Equivalently,  $F$ is induced by the bundle map $A\times\R\to A,\ (v,t)\mapsto  tv$ where $A\times \R$ is viewed as a bundle over $M\times\R$.) For $s\in\R$ let $\on{ev}_{s}\colon \R[t,\d t]\to \R$ be the morphisms given by pull-back to $\{s\}$. The  composition $(\on{ev}_1\otimes \on{id})\circ F$ is the identity, while $(\on{ev}_0\otimes \on{id})\circ F$ is $\pi^*\circ i^*$. This shows that $F$ defines a homotopy between $\pi^*\circ i^*$ and the identity. (The corresponding homotopy operator is $F$ composed with integration of $t$ over $[0,1]$.) 
\end{proof}

\subsection{Derivations of $\sW(A)$}\label{subsec:deri}
In addition to the `exterior differential' $\d_K$, the algebra $\sW(A)$ has `Lie derivatives' $l(D)$ and `contractions' 
$j(D)$ defined by 
derivations $D\in \Gamma(\mf{der}^i(\wedge A^*))$. 
Here $j(D)$ is the derivation of bidegree $(i,-1)$ given on $\phi\in \Om^1_{\wedge A^*}$ 
(cf. \eqref{eq:om1}) by $j(D)\phi=\phi(D)$, while 
$l(D)$ is the derivation of bidegree $(i,0)$, extending 
$D$ on $\Gamma(\wedge^\bullet A^*)=\sW^{\bullet,0}(A)$ and commuting with $\d_K$ in the graded sense.
We have the Cartan commutation relations
\[ \begin{split}
[l(D_1),l(D_2)]&=l([D_1,D_2]),\\ 
[l(D_1),j(D_2)]&=j([D_1,D_2]),\\ 
[j(D_1),j(D_2)]&=0,\\
[l(D),\d_K]&=0,\\ 
[j(D),\d_K]&=l(D),\\ 
[\d_K,\d_K]&=0,\end{split}\]
for $D,D_1,D_2\in \Gamma(\mf{der}\,^\bullet(\wedge A^*))$.  The constructions are natural with respect to morphisms $A_1\to A_2$ 
of vector bundles: If  the map $f^*\colon \Gamma(\wedge A_2^*) \to
 \Gamma(\wedge A_1^*)$ satisfies $f^*\circ D_2=D_1\circ f^*$, then the 
 map $f^*\colon \mathsf{W}(A_2)\to
 \mathsf{W}(A_1)$ satisfies $f^*\circ j(D_2)=j(D_1)\circ f^*$ and
 $f^*\circ l(D_2)=l(D_1)\circ f^*$.

In particular, the derivations $\i(X)\in \Gamma(\mf{der}\,^{-1}(\wedge A^*))$ given by contraction with 
$X\in \Gamma(A)$ give rise to derivations
\[ \i_S(X):=j(\i(X)),\ \ \ \i_K(X):=l(\i(X))\]
of $\mathsf{W}(A)$, of bidegrees $(-1,-1)$ and $(-1,0)$ respectively.  In the special case $A=V$, so that $W(V)=S
V^*\otimes\wedge V^*$ is the standard Koszul algebra, 
$\i_K(X)$ is the contraction operator acting on the second factor while $\i_S(X)$ is the contraction operator on the first 
factor.  
We have $[\i_S(X),\d_K]=\i_K(X)$ and $[\i_K(X),\d_K]=0$. Note also that for $f\in C^\infty(M)$, 
$\i_S(fX)=f\i_S(X)$ but 
\begin{equation}\label{eq:nottensorial}
\i_K(fX)=f\i_K(X)-\d f\circ \i_S(X)
\end{equation} 
where $\d f\in \Om^1(M)=\sW^{0,1}(A)$ acts by multiplication. 

\begin{remark}
There is an alternative geometric model for the Koszul algebra of a vector bundle $A\to M$, as follows. For $p\ge 0$ let 
$A^{(p)}=A\times_M\cdots \times_M A$ be the $p$-fold fiber product over $M$, with the convention $A^{(0)}=M$.
Thinking of $A$ as a groupoid and of $A^{(p)}$ as $B_pA$, we have a cup product on $C^\infty(A^{(\bullet)})$. We let $C_{sk}^\infty(A^{(\bullet)})\subset C^\infty(A^{(\bullet)})$ denote the subspace of skew-symmetric functions, endowed with the multiplication given by the skew-symmetrization of the cup product.
There is an injective morphism of graded algebras 
\[ \Gamma(\wedge^\bullet A^*)\to C_{sk}^\infty(A^{(\bullet)}),\]
taking a section of the exterior power $\wedge^p A^*$  to the corresponding multi-linear, skew-symmetric 
function on $A^{(p)}$. 

In a similar fashion, let $\Om_{sk}^\bullet(A^{(\bullet)})\subset \Om^\bullet(A^{(\bullet)})$ denote the subspace of forms which are skew-symmetric (for the action of the symmetric group $\mf{S}_p$), endowed with the skew-symmetrized cup product. There is an injective morphism of bigraded algebras 
\[ \sW^{\bullet,\bullet}(A)\to \Om_{sk}^\bullet(A^{(\bullet)}),\]
taking a section of $W^{p,q}(A)$ to a q-form on $A^{(p)}$ that is 
multi-linear (i.e., linear in each factor). This morphism intertwines the Koszul differential $\d_K$ with the de Rham 
differential. In particular, $\sW^{1,q}(A)$ is realized as the space of linear $q$-forms on $A$. This space plays a role in the work of 
Bursztyn-Cabrera-Ortiz \cite{bur:mul,bur:lin} on multiplicative 2-forms. 
\end{remark}

\subsection{The Weil algebroid of a Lie algebroid}
Suppose now that $A\to M$ is a Lie algebroid. The Chevalley-Eilenberg differential 
$\d_{CE}$ on sections of $\wedge A^*$ lifts to a differential $l(\d_{CE})$ on sections of $W(A)$. 
Like all operators of the form $l(D)$, it commutes with $\d_K$ in the graded sense. 
To simplify notation, we will write $l(\d_{CE})=\d_{CE}$. 
\begin{definition}
The bidifferential algebra $(\mathsf{W}(A),\d_K,\d_{\on{CE}})$ 
is called the \emph{Weil algebra} of the Lie algebroid $A$. The total differential 
$\d_W=\d_K+\d_{\on{CE}}$ is called the \emph{Weil differential}. 
\end{definition}
For any Lie algebroid morphism $A_1\to A_2$, the resulting map 
$f^*\colon \Gamma(\wedge A_2^*)\to \Gamma(\wedge A_1^*)$ intertwines the derivations $\d_{CE}$, hence 
$f^*\colon \sW(A_2)\to \sW(A_1)$ is a morphism of bidifferential algebras. 

Let $A\to M$ be a Lie algebroid, with Weil algebra $\sW(A)$. For a section $X\in \Gamma(A)$, we obtain a degree zero derivation 
$\L(X)=[\i(X),\d_{CE}]$ of $\Gamma(\wedge A^*)$; its extension to $\sW(A)$ will again be denoted by $\L(X)$.
We obtain yet another contraction operator $\i_{CE}(X):=j(\L(X))$, of bidegree $(0,-1)$. From the Cartan commutation relations, 
we see that 
\[ [\i_K(X),\d_{CE}]=\L(X)=[\i_{CE}(X),\d_K],\ \ \ [\i_{CE}(X_1),\i_K(X_2)]=\i_{S}([X_1,X_2]).\]
\subsection{Examples}
\begin{example}\label{ex:se0}
Consider first the case that $M=\pt$, so that $A=\g$ is a Lie algebra.  Choose dual bases  $e_i\in \g$ and $e^i\in \g^*$, and let 
$c_{ij}^k=\l e^k, [e_i,e_j]\r$ be the structure constants. The Chevalley-Eilenberg differential on $\wedge \g^*$ 
is given by the formula
$\d_{\on{CE}}=-\hh \sum_{ijk} c_{ij}^k \ e^i  e^j \i(e_k)$, with $\i(e_k)$ the contraction operator on $\wedge\g^*$. 
As we had seen, $W^{p,q}(\g)=S^q\g^*\otimes\wedge^{p-q}\g^*$, with $\d_K$ the standard Koszul differential. 
Letting $\ol{e}^i\in S^1\g^*$ denote the degree $(1,1)$ generators corresponding to the basis elements, 
we have $\d_K=\sum_i \ol{e}^i \iota(e_i)$. The operator $j(\d_{CE})$ on the Weil algebra becomes
\[ j(\d_{\on{CE}})=-\hh \sum_{ijk} c_{ij}^k \ e^i  e^j \i_S(e_k),\]
hence the differential $\d_{CE}:=l(\d_{CE})=[j(\d_{CE}),\d_K]$ on $W(\g)$ is 
\[ \d_{CE}=-\hh \sum_{ijk} c_{ij}^k \ e^i  e^j \iota(e_k)
+\sum_{ijk} c_{ij}^k \ e^i  \ol{e^j} \i_S(e_k).\]
One recognizes $(W(\g),\d_K,\d_{CE})$ as the standard Weil algebra. Here 
$\i_K(e_k)=\iota(e_k)$ is the usual contraction on the $\wedge\g^*$ factor, $\i_S(e_k)$ is the usual contraction on the 
$S\g^*$ factor, and $\i_{CE}(e_k)=\sum c_{ik}^j e^i \i_S(e_j)$.
\end{example}

\begin{example}\label{ex:se1}(Lie algebroid structures on trivial vector bundles)
Let $A\to M$ be a Lie algebroid, with a trivialization $A=M\times V$ as a vector bundle. Thus 
$\sW(A)=\Om(M)\otimes SV^*\otimes \wedge V^*$. 
Choose dual bases $e_i\in V$ and $e^i\in V^*$. Viewing the $e_i$ as constant sections of $A$, put 
$c_{ij}^k=\l e^k, [e_i,e_j]\r\in C^\infty(M)$. By a calculation similar to that of example \ref{ex:se0}, we obtain
the following formula for the Chevalley-Eilenberg differential on $\sW(A)$, 
\[ \d_{\on{CE}}=\sum_i  e^i\ \L_M(\a(e_i))-\sum_i \ol{e^i} \i_M(\a(e_i))
-\hh \sum_{ijk} c_{ij}^k \ e^i  e^j \i(e_k)
+
\sum_{ijk} c_{ij}^k \ e^i  \ol{e^j} \i_S(e_k).\]
Here $\i_M(\a(e_i))$ and $\L_M(\a(e_i))$ are contraction and Lie derivative with respect to the vector field
$\a(e_i)$, acting on the $\Om(M)$ factor, $\i(e_k)$ is a contraction on the $\wedge V^*$ factor, 
$\i_S(e_k)$ is contraction on the $S V^*$ factor, 
and the $e^i, \ol{e^i}$ are the generators of $\wedge V^*$ and $S V^*$, acting by multiplication.

%The Chevalley-Eilenberg differential on $\Gamma(\wedge A^*)=C^\infty(M)\otimes \wedge V^*$ is given by the formula
%\begin{equation}\label{eq:CE} \d_{\on{CE}}=\sum_i  e^i\ \a(e_i)-\hh \sum_{ijk} c_{ij}^k \ e^i  e^j \iota(e_k).\end{equation}Here the vector field $\a(e_i)$ acts only on the $C^\infty(M)$ factor, $\iota(e_k)$ is a contraction on the $\wedge V^*$ factor, and $e^i$ act by multiplication. 
%The operators $\mathsf{l}(e_i)=[\i(e_i),\d_{\on{CE}}]$ are given by 
%\[ \mathsf{l}(e_i)=\a(e_i)-\sum_{jk} c_{ij}^k\ e^j \iota(e_k).\]
%The operator $j(\d_{CE})$ on $\sW(A)=\Om(M)\otimes SV^*\otimes \wedge V^*$ reads as\[ j(\d_{\on{CE}})=\sum_i  e^i\ \iota_M(\a(e_i))-\hh \sum_{ijk} c_{ij}^k \ e^i  e^j \i_S(e_k).\]Recall that $\d_K$ is a sum of the de Rham differential $\d_M$ on $\Om(M)$ and the Koszul differential $\d_V$ on $S(V^*)\otimes\wedge(V^*)$.  Writing $\ol{e^i}=\d_V e_i\in S^1 V^*$, we obtain the following formula for the Chevalley-Eilenberg differential $\d_{\on{CE}}=[j(\d_{\on{CE}}),\d_K]$,
%\[ \d_{\on{CE}}=\sum_i  e^i\ \L_M(\a(e_i))-\sum_i \ol{e^i} \iota_M(\a(e_i))-\hh \sum_{ijk} c_{ij}^k \ e^i  e^j \iota(e_k)+\sum_{ijk} c_{ij}^k \ e^i  \ol{e^j} \i_S(e_k).\]
The special case that the $c_{ij}^k$ are constant corresponds to an action Lie algebroid for an action of the Lie algebra 
$V=\g$ on $M$. Here $(\sC(A),\d_{CE})$ is the Chevalley-Eilenberg complex of $\g$ with coefficients in 
$C^\infty(M)$, and $(\mathsf{W}(A),\d_W)$ is isomorphic to $W(\g)\otimes \Om(M)$
with differential $\d_{W\g}\otimes 1+1\otimes \d_M$, using the isomorphism given by a Kalkman twist  by the operator $\exp(\sum_i e^i\otimes \i_M(e_i))$. See Guillemin-Sternberg \cite{gu:sy} and Abad-Crainic \cite{aba:wei}. 
\end{example}

\begin{example}(Tangent bundle)\label{ex:gorms}
If $A=TM$, the Chevalley-Eilenberg complex $\Gamma(\wedge A^*)=\Om(M)$ is the usual de Rham complex. Thus, 
$\mathsf{W}^{p,q}(TM)$ comes with two kinds of de Rham differentials, 
$\d'=\d_{\on{CE}}$ and $\d''=\d_K$. As a bigraded algebra, 
the Weil algebra $\mathsf{W}(TM)$ is generated by functions $f\in C^\infty(M)$, 
$(1,0)$-forms
$\d'f$, $(0,1)$-forms $\d'' f$, and $(1,1)$-forms
$\d'\d'' f$. The bidifferential algebra 
\begin{equation}\label{eq:gorms}
\Om_{[2]}(M):=\mathsf{W}(TM)
\end{equation}
with differentials $\d',\d''$  
was introduced by Kochan-Severa \cite{koc:dif} under the name of \emph{differential gorms}; it was subsequently studied by 
Vinogradov-Vitagliano \cite{vin:ite} under the name of \emph{iterated differential forms}. (Obviously, there are generalizations to 
$n$-differential algebras  $\Om_{[n]}(M)$.) Many of the standard constructions for differential forms generalize with minor changes. 
In particular, iterated differential forms can be pulled back under smooth maps, and given a smooth homotopy $F\colon [0,1]\times M\to N,\ (t,x)\mapsto F_t(x)$ one obtains two homotopy operators $h',\,h''\colon \Om_{[2]}(N)\to \Om_{[2]}(M)$, of 
bidegrees $(-1,0)$ and $(0,-1)$, such that $[\d',h']=[\d'',h'']=F_1^*-F_0^*$ while 
$[\d',h'']=[\d'',h']=0$. The homotopy operators are obtained as pullbacks under the map $F$, followed by integration over $[0,1]$ with respect to $\d' t$, respectively $\d'' t$.
\end{example}

\begin{example}(Foliations)
Suppose  $\ca{F}$ is a foliation of $M$, defining a Lie algebroid $A=T_\F M$. The inclusion $T_\F M\to TM$ 
defines a surjective map from \eqref{eq:gorms} onto the Weil algebra $\sW(T_\F M)$. 
%The kernel of this map  is the ideal of differential gorms $\alpha\in \Om_{[2]}^{p,q}(M)$ such that 
%\[ \iota'(v_1)\cdots \iota'(v_p)\alpha=0\] for all $v_1,\ldots v_p\in \Gamma(T_\F M)$.  
One can think of elements of $\sW(T_\F M)$ as differential gorms 
in the direction of the foliation and differential forms in the transverse direction.
\end{example}

Similar to the well-known result for the Weil algebra $W(\g)$, we have: 
\begin{proposition}
For any Lie algebroid $A\to M$, there is a canonical homotopy equivalence between  
$(\on{Tot}^\bullet\mathsf{W}(A),\d_W)$ and the de Rham algebra $(\Omega^\bullet(M),\d_M)$. 
\end{proposition}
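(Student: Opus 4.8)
The plan is to recognize this statement as one more instance of the Perturbation Lemma, exactly in the spirit of the rest of the paper, and to bootstrap it from the Koszul-level result already proved. Proposition \ref{prop:koszul} provides a contraction of $(\on{Tot}^\bullet\sW(A),\d_K)$ onto $(\Om^\bullet(M),\d_M)$: the inclusion $i$ and projection $\pi$ of \eqref{eq:inclproj} are homotopy inverses with respect to $\d_K$, with explicit homotopy operator $K=J\circ F$, and $i$ identifies $\Om^\bullet(M)$ with the subcomplex $W^{0,\bullet}(A)$ on which $\d_K$ restricts to $\d_M$. First I would record the relevant bidegrees: $\d_K$ has bidegree $(0,1)$, the Chevalley-Eilenberg lift $\d_{CE}=l(\d_{CE})$ has bidegree $(1,0)$, and $K$ has bidegree $(0,-1)$. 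I would also note the side conditions $K\circ K=0$, $\pi\circ K=0$, $K\circ i=0$ required to run the Perturbation Lemma; these follow from the explicit form of $K=J\circ F$ (integration against $\d t$ after the dilation $F$, which on pure base forms has no $\d t$ component), or, if one prefers, can be enforced by the standard adjustment of a homotopy.

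Next I would treat $\d_{CE}$ as a perturbation of $\d_K$. It is a genuine perturbation, since $\d_{CE}^2=0$ and $\d_{CE}$ commutes with $\d_K$ in the graded sense (both being odd), so that $(\d_K+\d_{CE})^2=\d_W^2=0$. The key point making the lemma applicable is that the composite $K\circ\d_{CE}$ has bidegree $(1,-1)$, so it strictly lowers the form degree $q$; hence on each bidegree $W^{p,q}(A)$ one has $(K\circ\d_{CE})^{q+1}=0$, and $(1-K\circ\d_{CE})^{-1}=\sum_{m\ge0}(K\circ\d_{CE})^m$ is a degreewise finite, well-defined sum (even though $\sW(A)$ is unbounded in the $q$-direction, each total degree is a finite sum of bidegrees). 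Applying the Perturbation Lemma (Appendix \ref{app:perturb}, Lemma \ref{lem:B}) to the contraction $(\pi,i,K)$ with perturbation $\d_{CE}$ then yields a new contraction, in particular a homotopy equivalence, between $(\on{Tot}^\bullet\sW(A),\d_W)$ and $(\Om^\bullet(M),\d_M+c)$, where $c=\pi\circ\d_{CE}\circ(1-K\circ\d_{CE})^{-1}\circ i$ is the induced perturbation of the differential on the base.

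The crux is then to show $c=0$, which is a triangularity argument in the $A$-degree $p$. The operator $i$ lands in $W^{0,\bullet}(A)$, each factor $K\circ\d_{CE}$ raises $p$ by one, and the outermost $\d_{CE}$ raises $p$ by one more; hence every summand of $c$ produces an element of $A$-degree $p\ge1$, which is annihilated by the projection $\pi$ onto $W^{0,\bullet}(A)$. Therefore the induced differential is $\d_M$ itself, and the Perturbation Lemma delivers the desired canonical homotopy equivalence between $(\on{Tot}^\bullet\sW(A),\d_W)$ and $(\Om^\bullet(M),\d_M)$; specializing to $M=\pt$ recovers the classical acyclicity of the Weil algebra $W(\g)$. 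The main obstacle I anticipate is technical rather than conceptual: cleanly verifying the side conditions for the specific homotopy $K=J\circ F$ and the degreewise nilpotency of $K\circ\d_{CE}$ in the unbounded $q$-direction. Once these are settled, the vanishing of the correction term, and hence the statement, is immediate from the $p$-degree count.
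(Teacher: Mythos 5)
Your argument is correct, but it follows a genuinely different route from the paper's. The paper proves this proposition by a ``Kalkman trick'': the derivation $\mf{u}=j(\d_{\on{CE}})$ has bidegree $(1,-1)$, hence is locally nilpotent, and satisfies $[\mf{u},\d_K]=\d_{\on{CE}}$ and $[\mf{u},\d_{\on{CE}}]=0$; therefore its exponential $U$ is a degree-preserving algebra automorphism of $\sW(A)$ with $U\circ\d_K\circ U^{-1}=\d_W$, and composing the $\d_K$-homotopy equivalence of Proposition \ref{prop:koszul} with $U$ finishes the proof. That conjugation is an honest isomorphism of complexes $(\sW(A),\d_W)\cong(\sW(A),\d_K)$, so there are no correction terms, side conditions, or convergence issues to check, and the multiplicative structure comes along for free. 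Your homotopy-transfer argument buys uniformity with the rest of the paper and explicit perturbed homotopy data, and your two degree counts are exactly right: $K\circ\d_{\on{CE}}$ has bidegree $(1,-1)$, hence is nilpotent on each $W^{p,q}(A)$, and every summand of the transferred correction $c=\pi\circ\d_{\on{CE}}\circ(1-K\circ\d_{\on{CE}})^{-1}\circ i$ lands in $A$-degree $p\ge 1$ and is killed by $\pi$. One caveat on the citation: Lemmas \ref{lem:A} and \ref{lem:B} of Appendix \ref{app:perturb} assume the small complex is a subcomplex for \emph{both} differentials, which fails here because $\d_{\on{CE}}$ does not preserve $W^{0,\bullet}(A)$; moreover Lemma \ref{lem:B} is the multiplicative version, which you do not need. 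What you are really invoking is the general Basic Perturbation Lemma with a transferred differential on the small complex --- standard, and in effect verified by your computation $c=0$ --- but this should be said explicitly, and the side conditions for $K=J\circ F$ (in particular $K\circ K=0$) do deserve the verification you defer.
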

\begin{proof}
The proof is a generalization of the `Kalkman trick'. The 
derivation $\mf{u}=j(\d_{\on{CE}})$ has bidegree $(1,-1)$, and satisfies 
\[ [\mf{u},\d_K]=\d_{\on{CE}},\ \ [\mf{u},\d_{\on{CE}}]=0.\]
Since $\mf{u}$ has total degree $0$ and is nilpotent, its exponential $U=\mf{u}$ 
is a well-defined algebra automorphism of $\mathsf{W}(A)$, preserving the total degree, and with 
\[ U\circ \d_K\circ U^{-1}=\d_K+\d_{\on{CE}}=\d_W.\]
By Proposition \ref{prop:koszul}, the inclusion $\Om^\bullet(M)\hra
\on{Tot}^\bullet\mathsf{W}(A)$ is a homotopy equivalence with respect to $\d_K$; hence
its composition with $U$  is a homotopy
equivalence with respect to $\d_W$.
\end{proof}

\section{The Van Est map $\Om(BG)\to \sW(A)$}
We will now continue the discussion from Section \ref{sec:VEd} to define a Van Est map for the Weil algebras.
\subsection{The Van Est triple complex}
The simplicial Lie algebroid $T_\F E_\bullet G\to E_\bullet G$ gives rise to a tridifferential algebra $\sW(T_\F EG)$, with summands
$\sW^{r,p,q}(T_\F EG)=\sW^{p,q}(T_\F E_rG)$, 
and with commuting differentials 
\[\delta,\ \ \d'=(-1)^r\d_{CE},\ \ \d''=(-1)^r\d_K\] 
of tridegrees $(1,0,0),\ (0,1,0)$, and $(0,0,1)$. The product is a cup product, as before:
\[ \alpha\cup\alpha'=(-1)^{r'(p+q)}\pr^*\alpha\ (\pr')^*\alpha'\]
for $\alpha\in \sW^{p,q}(T_\F E_rG)$ and $\alpha'\in, \sW^{p',q'}(T_\F E_{r'}G)$, 
where the right hand side uses the multiplication in $\sW^{\bullet,\bullet}(T_\F E_{r+r'}G)$.
We have a diagram, for all $r$, 
\[ \xymatrix{ {\sW^{\bullet,\bullet}(T_\F E_rG)} & {\Om^\bullet(B_rG)}\ar[l]_{\ \ \ \ \ \kappa_r^*} \\
{\sW^{\bullet,\bullet}(A_r)} \ar[u]^{\pi_r^*} &}.
\]
Both $\kappa_\bullet^*$ and $\pi_\bullet^*$ are morphisms of 
tridifferential algebras, where $\Om^\bullet(B_\bullet G)$ is regarded as a triple complex
concentrated in tridegrees $(\bullet,0,\bullet)$.  We also have the maps 
\[ \iota_r^*\colon \sW^{\bullet,\bullet}(T_\F E_rG)\to \sW^{\bullet,\bullet}(A_r)\]
induced by the inclusion $\iota_r\colon A_r\to T_\F E_rG$. Then $\iota_\bullet^*$ is a left inverse to $\pi_\bullet^*$ intertwining the simplicial differential $\delta$ as well as the Koszul differential $\d''$, but usually not the differential $\d'$.

\subsection{The Van Est map for the Bott-Shulman-Stasheff complex}
Since the maps $h_{r,i}\colon E_rG\to E_{r+1}G$ lift to vector bundle morphisms $T_\F E_rG\to T_\F E_{r+1}G$,  we have a well-defined homotopy operator  $h=\sum_i (-1)^{i+1} (h_{r-1,i})^*\colon \sW(A_r)\to \sW(A_{r-1})$ 
with respect to the simplicial differential $\delta$. On the dense subspace 
\[ \Om(E_rG)\otimes_{\Om(M)} \sW(A)\subseteq \sW(T_\F E_rG),\]
it is the natural extension of the homotopy operator on $\Om(E_\bullet G)$. (This is well-defined, since the latter is a
$\Om(M)$-module morphism, cf.~ Part \eqref{it:mod} of Proposition \ref{prop:homproperties}.) Note that $h$ commutes with $\d''$, but usually not with 
$\d'$. Let $\on{Tot}_{12}^{\bullet,\bullet}\sW(T_\F EG)$ be the bidifferential algebra with summands 
$\on{Tot}_{12}^{n,q}\sW(T_\F EG)=\bigoplus_{r+p=n}\sW^{p,q}(T_\F E_rG)$, and with the differentials 
$\delta+\d'$ and $\d''$. We denote by $\on{Tot}^\bullet \sW(T_\F EG)$ the total complex obtained by summing over all three gradings. 
\begin{proposition}
The composition 
\[\iota_0^*\circ (1+\d'\circ h)^{-1}\colon \on{Tot}_{12}^{\bullet,\bullet}\sW(T_\F EG)\to \sW^{\bullet,\bullet}(A)\] 
is a
morphism of bidifferential spaces. In fact, it is a homotopy equivalence with respect to $\delta+\d'$, with homotopy inverse 
$\pi_0^*$. It restricts to an algebra morphism on the normalized subcomplex $\on{Tot}_{12}^{\bullet,\bullet}\wt{\sW}(T_\F EG)$.
\end{proposition}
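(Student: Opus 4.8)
The plan is to run, essentially verbatim, the homological-perturbation argument of Section~\ref{sec:VEd} (the Lemma preceding Proposition~\ref{prop:hmk}, together with Proposition~\ref{prop:hmk} and Theorem~\ref{th:multi}), now in the tridifferential setting, treating the Koszul differential $\d''$ as a \emph{spectator}. First I would record that the triple $(\iota_\bullet^*,\pi_\bullet^*,h)$ provides contraction (special deformation retraction) data from $(\sW(T_\F E_\bullet G),\delta)$ onto the constant simplicial object $(\sW(A_\bullet),\delta)$: indeed $\iota_\bullet^*\circ\pi_\bullet^*=\on{id}$, and $h$ is the homotopy operator for $\delta$ with $\delta\circ h+h\circ\delta=\on{id}-\pi_\bullet^*\circ\iota_\bullet^*$, exactly as for the de Rham complex of $E_\bullet G$. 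The crucial point, which makes $\d''$ a spectator, is that every operator in sight commutes with $\d''$ in the graded sense: the maps $\iota_\bullet^*$ and $\pi_\bullet^*$ intertwine $\d''$ with $\d_K$ because they are induced by vector-bundle morphisms, while $h$ commutes with $\d''$ because it is the natural extension, along the $\Om(M)$-module structure, of the homotopy operator on $\Om(E_\bullet G)$ (cf.\ Proposition~\ref{prop:homproperties}\eqref{it:mod}) and pullbacks commute with $\d_K$. Thus all of the following takes place in the category of $\d''$-complexes, i.e.\ of bidifferential spaces.

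Next I would perturb by $\d'$. Since $\d'\circ h$ has tridegree $(-1,1,0)$, it preserves the first total degree $n=r+p$ and the Koszul degree $q$ while strictly lowering the simplicial degree $r\ge 0$; hence it is nilpotent on each summand and $1+\d'\circ h$ is invertible. The Basic Perturbation Lemma (Lemma~\ref{lem:B}) then produces the perturbed contraction, whose projection is $\iota_\bullet^*\circ(1+\d'\circ h)^{-1}$ and whose inclusion is $(1+h\circ\d')^{-1}\circ\pi_\bullet^*$; these intertwine $\delta+\d'$ with the induced differential on $\sW(A_\bullet)$. That induced differential is again the honest $\delta+\d'$: since $\pi_\bullet^*$ is a cochain map for $\d'$ and $h\circ\pi_\bullet^*$ is proportional to $\pi_\bullet^*$, one has $(h\circ\d')\circ\pi_\bullet^*\propto\pi_\bullet^*\circ\bar{\d}'$ with $\bar{\d}'$ the Chevalley--Eilenberg differential on $A_\bullet$, and the correction terms collapse using $\iota_\bullet^*\circ\pi_\bullet^*=\on{id}$ together with $\bar{\d}'^2=0$, exactly as in Section~\ref{sec:VEd}. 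Finally I would compose with the further contraction of $(\sW(A_\bullet),\delta+\d')$ onto $\sW(A_0)=\sW(A)$ furnished by the homotopy operator $k$ of Example~\ref{ex:triv} (the identity in even simplicial degrees $>0$ and $0$ otherwise, which also commutes with $\d'$ and $\d''$), and simplify the resulting inclusion using $(1+h\circ\d')^{-1}\circ\pi_0^*=\pi_0^*$, valid for degree reasons since $h$ vanishes on the $r=0$ summand. This yields that $\iota_0^*\circ(1+\d'\circ h)^{-1}$ is a homotopy equivalence with respect to $\delta+\d'$, with homotopy inverse $\pi_0^*$; because every ingredient commutes with $\d''$ (and $\iota_0^*,\pi_0^*$ intertwine $\d''$ with $\d_K$), it is in fact a morphism of bidifferential spaces.

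For the last assertion I would invoke the refined perturbation lemma of Gugenheim--Lambe--Stasheff (Lemma~\ref{lem:B}), which upgrades the contraction to a morphism of algebras once the \emph{side conditions} $h\circ h=0$, $\iota_\bullet^*\circ h=0$, and the $R_0=\pi_0^*\iota_0^*$-twisted derivation property of $h$ are satisfied. These are the Weil-algebra versions of Proposition~\ref{prop:homproperties}: the homotopy $h$ on $\sW(T_\F E_\bullet G)$ is the natural extension of the homotopy on $\Om(E_\bullet G)$, and the normalized subcomplex $\on{Tot}_{12}^{\bullet,\bullet}\wt{\sW}(T_\F EG)$ contains the dense subspace $\wt{\Om}(E_rG)\otimes_{\Om(M)}\sW(A)$, on which the side conditions follow from Proposition~\ref{prop:homproperties} verbatim. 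Hence the map restricts to an algebra morphism on the normalized subcomplex, just as in Theorem~\ref{th:multi}.

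The step I expect to be the main obstacle is not the perturbation lemma itself but the sign bookkeeping that certifies $\d''$ as a genuine spectator: one must check that $h$, $\delta$, $\d'$, $\iota_\bullet^*$ and $\pi_\bullet^*$ all commute with $\d''$ in the graded sense in the presence of the degree-dependent signs $\d'=(-1)^r\d_{CE}$ and $\d''=(-1)^r\d_K$, so that the perturbation data, which is constructed purely in the $(\delta,\d')$-directions, is automatically compatible with $\d''$ and the output is a morphism of bidifferential spaces rather than merely a $(\delta+\d')$-cochain map. The secondary technical point is verifying that the side conditions, established in Proposition~\ref{prop:homproperties} only for $\Om(E_\bullet G)$, persist for the Weil homotopy on the dense subspace and hence on the normalized subcomplex.
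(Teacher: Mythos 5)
Your proof is correct and, for the first two claims, coincides with the paper's: the paper likewise obtains the $(\delta+\d')$-homotopy equivalence by applying the Perturbation Lemma~\ref{lem:A} to $\on{Tot}_{12}^{\bullet,q}\sW(T_\F EG)$ for each fixed Koszul degree $q$, with compatibility with $\d''$ coming for free since $h$, $\iota_\bullet^*$, $\pi_\bullet^*$ all (graded-)commute with $\d''$. The one place you genuinely diverge is the multiplicativity claim: you apply Lemma~\ref{lem:B} with the perturbation $\d'$ alone, whereas the paper applies it with the full complementary differential $\d=\d'+\d''$ (the setting in which Lemma~\ref{lem:B} is literally stated) to conclude that $\iota_0^*\circ(1+\d\circ h)^{-1}$ is multiplicative on normalized cochains, and then identifies this map with $\iota_0^*\circ(1+\d'\circ h)^{-1}$ via the operator identity
\[
(1+\d\circ h)^{-1}=(1+\d'\circ h)^{-1}+\sum_{n\ge 1}(-h\circ\d'')^n,
\]
which uses that $h$ and $\d''$ commute together with $\iota_0^*\circ h=0$. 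Your more direct route is also valid --- the proof of Lemma~\ref{lem:B} only needs the perturbing operator to be a square-zero derivation anticommuting with $\delta$ such that $\d'\circ h$ is locally nilpotent, and the sign in the $\Pi$-derivation property of $h$ refers to the full total degree in either reading --- but it does require this mild rereading of Lemma~\ref{lem:B} beyond its stated double-complex form; the paper's short detour through $\d$ exists precisely to avoid that. Your verification of the side conditions on the dense subspace $\wt{\Om}(E_rG)\otimes_{\Om(M)}\sW(A)$ is exactly what the paper does (via Proposition~\ref{prop:homproperties} and the proof of Theorem~\ref{th:multi}).
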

\begin{proof}
The first part is a direct consequence of the Perturbation Lemma \ref{lem:A}, applied to $\on{Tot}_{12}^{\bullet,q}\sW(T_\F EG)$ for 
fixed $q$. We obtain a similar statement for the total complex $\on{Tot}^\bullet \sW(T_\F EG)$ (with the differential $\delta+\d$ where $\d=\d'+\d''$), 
for the composition $\iota_0^*\circ (1+\d\circ h)^{-1}$. By Lemma \ref{lem:B} (cf.~ the proof of 
Theorem \ref{th:multi}), the map $\iota_0^*\circ (1+\d\circ h)^{-1}$ is an algebra morphism on normalized cochains. 
But this map coincides with $\iota_0^*\circ (1+\d'\circ h)^{-1}$, because 
\[  (1+\d\circ h)^{-1}=(1+\d'\circ h-h\circ \d'')^{-1}=(1+\d'\circ h)^{-1}+\sum_{n=1}^\infty (-h\circ \d'')^n\]
(using that $h$ and $\d''$ commute), and $\iota_0^*\circ h=0$. 
\end{proof}
\begin{definition}
The composition 
\[ \on{VE}\colon \iota_0^*\circ (1+\d'\circ h)^{-1}\circ \kappa^*\colon \Om^\bullet(B_\bullet G)\to \sW^{\bullet,\bullet}(A).\]
is the Van Est map for the Bott-Shulman-Stasheff  double complex.
\end{definition}
By construction, the map $\on{VE}$ is a morphism of bidifferential spaces, and it restricts to an algebra morphism on the normalized 
cochains. It is an $\Om(M)$-module morphism, since each of the maps $\iota_0^*,\ \kappa^*$, and $1+\d'\circ h$ is an $\Om(M)$-module morphism.

For local Lie groupoids $G$, one similarly obtains a Van Est map on the complex of germs, 
\[ \on{VE}_M\colon \Om^\bullet(B_\bullet
G)_M\to \sW^{\bullet,\bullet}(A).\]
The latter is surjective, and as we shall see in the next section, admits a right inverse  which is a morphism of bidifferential spaces. %, and [...]
%Moreover, in bidegree $(0,\bullet)$ and $(1,\bullet)$, the map $\on{VE}_M$ is an isomorphism on cocycles (cf. . 
The Van Est map 
for a global Lie groupoid $G$ factors through the localized Van Est map $\on{VE}_M$. %It is easy to see that 
%the map $\Om^\bullet(B_1G)\to \Om^\bullet(B_1G)_M$ restricts to an injective map on $\delta$-cocycles -- 
%indeed, any multiplicative form on $G$ is uniquely determined by its restriction to an arbitrarily small open neighborhood of $M$ in $G$. 
%If the $\tz$-fibers are 1-connected, then this map restricts to a bijection on cocycles. This recovers a result 
%of Bursztyn-Cabrera \cite[Proposition 4]{bur:mul}. 

\section{Van Est theorems}
The Van Est map can be viewed as a differentiation procedure from Lie groupoid cochains to Lie algebroid cochains. In 
some situations, it is possible to obtain an integration procedure in the opposite direction.  In our approach, 
the Van Est map was constructed using a homotopy operator with respect to $\delta$; to obtain a cochain map in the other direction one wants a homotopy operator with respect to the differential $\d$.

Note that the principal $G$-bundles $\kappa_p\colon E_pG\to B_pG$ are trivial: For any fixed $i\le p$, 
the submanifold of elements $(a_0,\ldots,a_p)\in E_pG$ with $a_i\in M$  defines 
a section. Taking $i=0$, the corresponding right inverse to $\kappa_p$ is the map 
\[ j_p\colon B_pG\to E_pG,\ \ \  (g_1,\ldots,g_p)\mapsto (\tz(g_1),g_1^{-1},\ldots,(g_1\cdots g_p)^{-1}).\]
As before, we regard $\Om^\bullet(B_\bullet G)$ as a bidifferential algebra concentrated in bidegrees $(\bullet,0,\bullet)$. The 
morphism of bigraded spaces
\[ j_\bullet^*\colon \sW^{\bullet,\bullet}(T_\F E_\bullet G)\to \Om^\bullet(B_\bullet G)\]
(given by the obvious pullback map in tridegree $(\bullet,0,\bullet)$, and
equal to zero  in all other tridegrees)  
is a left inverse to $\kappa_\bullet^*$. It is a cochain map with respect to $\d',\d''$ (in particular, $j_\bullet^*\circ \d'=0$), 
but since $j_\bullet$ is \emph{not} a simplicial map it is neither a cochain map with respect to $\delta$, 
nor an algebra morphism.

Consider the very special case that the $\tz$-fibers of $G$ are contractible, in the sense that there is a smooth deformation retraction 
$\lambda_t\colon G\to G$, depending smoothly on $(t,g)\in  [0,1]\times G$, and 
such that 
\begin{equation}\label{eq:retractionproperties}
\lambda_t|_M=\on{id}_M,\ \ \lambda_0=\on{id}_G,\ \ \lambda_1=\iota\circ \tz,\ \ \tz\circ \lambda_t=\tz
\end{equation}
for all $t\in [0,1],\ g\in G$.  
One then obtains deformation retractions $\lambda_{p,t}\colon E_pG\to E_pG$ 
with 
\[ \lambda_{p,t}|_{B_pG}=\on{id}_{B_pG},\ \ \lambda_{p,0}=\on{id}_{E_pG},\ \ \lambda_{p,1}=j_p\circ \kappa_p,\ \ \kappa_p\circ \lambda_{p,t}=\kappa_p,\]
by the formula
\[ \lambda_{p,t}(a_0,\ldots,a_p)=\big(\lambda_t(a_0),\,a_1 a_0^{-1} \lambda_t(a_0),\ldots,\,a_p a_0^{-1} \lambda_t(a_0)\big).\]
 In turn, 
these define homotopy operators (cf.~ Example \ref{ex:gorms})
\[ k\colon \sW^{p.q}(T_\F E_rG)\to \sW^{p-1,q}(T_\F E_rG)\]
(i.e.,  $k\d'+\d' k=\on{id}-\kappa_\bullet^*\,j_\bullet^*$), with $k\d''+\d'' k=0$. 
%
%Viewing $\sC^{\bullet}(T_\F E_r G)=
%\Om^\bullet_\F(E_rG)$ as the foliated de Rham complex, this homotopy operator is described as  \[k_r=(-1)^r (\pr_r)_*\circ \lambda_r^*,\] where $(\pr_r)_*\colon \Om^q([0,1]\times E_rG)\to \Om^{q-1}(E_rG)$ is integration over the fibers of the projection $\pr_r\colon [0,1]\times E_rG\to E_rG$ to the second factor. 
%

For a general Lie groupoid $G$, or even a local Lie groupoid, one can always choose a \emph{germ} of a deformation 
retraction $\lambda$ along the $\tz$-fibers. The properties \eqref{eq:retractionproperties} are 
to be understood as equalities of germs along $M$ (or along $[0,1]\times M$). 
The germ determines a homotopy operator $k_r\colon \sW^{p,q}(T_\F E_rG)_M\to \sW^{p-1,q}(T_\F E_rG)_M$ for the complex of germs.
We obtain:
\begin{proposition}\label{prop:localInv}
For any local Lie groupoid $G\rra M$
the map 
$\on{VE}_M\colon \Om^q(B_\bullet G)_M\to \sW^{\bullet,q}(A)$
is a homotopy equivalence, for all fixed $q$. Given a germ of a retraction of $G$ onto $M$ along $\tz$-fibers, the 
corresponding operator $k$  defines a homotopy inverse:
\[ j_\bullet^* \circ (1+\delta k)^{-1}\circ \pi^*_0\colon \sW^{\bullet,\bullet}(A)\to \Om^\bullet(B_\bullet G)_M.\]
Similar assertions hold for the Van Est map $\on{VE}$ of global Lie groupoids with contractible $\tz$-fibers. 
\end{proposition}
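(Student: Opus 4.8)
\emph{Proof plan.} Fix the Koszul degree $q$ and work inside the germ complex $\on{Tot}_{12}^{\bullet,q}\sW(T_\F EG)_M$, i.e.\ the bidifferential complex obtained from the triple complex by freezing $q$ and keeping the two differentials $\delta$ and $\d'$. By construction $\on{VE}_M=\bigl(\iota_0^*\circ(1+\d'\circ h)^{-1}\bigr)\circ\kappa^*$, so it factors through this complex. The plan is to show that each of the two factors is a homotopy equivalence with respect to $\delta+\d'$ and then compose; the two factors will be produced by two applications of the Perturbation Lemma \ref{lem:A} with the roles of $\delta$ and $\d'$ interchanged.

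The factor $\iota_0^*\circ(1+\d'\circ h)^{-1}$ requires nothing new. The Perturbation Lemma argument already used for it (base differential $\delta$, homotopy $h$, perturbation $\d'$) applies verbatim to the germ complex at each fixed $q$: it only uses that $h$ is a $\delta$-homotopy and that $\d'\circ h$ is locally nilpotent because the simplicial degree $r$ lowers under $\d'\circ h$ while $r\ge 0$. This shows that $\iota_0^*\circ(1+\d'\circ h)^{-1}\colon \on{Tot}_{12}^{\bullet,q}\sW(T_\F EG)_M\to(\sW^{\bullet,q}(A),\d_{CE})$ is a homotopy equivalence with homotopy inverse $\pi_0^*$.

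The main step is to treat the factor $\kappa^*$ by running the Perturbation Lemma \emph{in the other direction}: base differential $\d'$, homotopy operator $k$, perturbation $\delta$. The required deformation-retraction data are exactly the properties of $k$ recorded above, namely $j_\bullet^*\circ\kappa_\bullet^*=\on{id}$ on $\Om^q(B_\bullet G)_M$ together with $\d'\circ k+k\circ\d'=\on{id}-\kappa_\bullet^*\circ j_\bullet^*$. Invertibility of $1+\delta\circ k$ is immediate, since $\delta\circ k$ lowers the degree $p$ and $p\ge 0$, so $\delta\circ k$ is locally nilpotent. Moreover the side condition $k\circ\kappa_\bullet^*=0$ holds for free: $\kappa_\bullet^*$ lands in the subspace $p=0$, on which $k$ (lowering $p$) vanishes. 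Lemma \ref{lem:A} then yields a deformation retraction of $(\on{Tot}_{12}^{\bullet,q}\sW(T_\F EG)_M,\delta+\d')$ onto $\Om^q(B_\bullet G)_M$ whose new projection is $j_\bullet^*\circ(1+\delta\circ k)^{-1}$, whose new inclusion is $(1+k\circ\delta)^{-1}\circ\kappa_\bullet^*=\kappa_\bullet^*$ (again by $k\circ\kappa_\bullet^*=0$ together with $\delta\circ\kappa_\bullet^*=\kappa_\bullet^*\circ\delta$), and whose new base differential is $j_\bullet^*\circ\delta\circ\kappa_\bullet^*=j_\bullet^*\circ\kappa_\bullet^*\circ\delta=\delta$. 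Hence $\kappa^*$ is a homotopy equivalence with respect to $\delta+\d'$, with homotopy inverse $j_\bullet^*\circ(1+\delta\circ k)^{-1}$.

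Composing the two homotopy equivalences shows that $\on{VE}_M=\bigl(\iota_0^*\circ(1+\d'\circ h)^{-1}\bigr)\circ\kappa^*$ is a homotopy equivalence, whose homotopy inverse is the composite of the inverses in the opposite order, namely $j_\bullet^*\circ(1+\delta\circ k)^{-1}\circ\pi_0^*$, as claimed. For a global Lie groupoid with contractible $\tz$-fibers the retraction $\lambda_{p,t}$ is defined globally rather than as a germ, so $k$ is an honest $\d'$-homotopy and the identical argument applies to $\Om^q(B_\bullet G)$ in place of the germ complex. I expect the only genuine work to lie in the deformation-retraction identity $\d'\circ k+k\circ\d'=\on{id}-\kappa_\bullet^*\circ j_\bullet^*$ extracted from the geometry of $\lambda$ (the analogue for $k$ of Proposition \ref{prop:homproperties}); once that is in hand, the rest is degree bookkeeping and two invocations of the Perturbation Lemma, the second being the mirror image of the construction of $\on{VE}$ itself.
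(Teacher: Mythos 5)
Your proposal is correct and follows essentially the same route as the paper: two applications of the Perturbation Lemma \ref{lem:A}, one with the roles of $\delta$ and $\d'$ reversed (using $k$ as the $\d'$-homotopy and $\delta$ as the perturbation, with $(1+k\delta)^{-1}\kappa^*=\kappa^*$ for degree reasons), composed with the already-established homotopy equivalence $\iota_0^*\circ(1+\d'\circ h)^{-1}$ with inverse $\pi_0^*$. The degree bookkeeping and the treatment of the global case also match the paper's argument.
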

\begin{proof}
Reversing the roles of $\d$ and $\delta$ in the Perturbation Lemma \ref{lem:A}, we see that 
\[ j_\bullet^* \circ (1+\delta k)^{-1}\colon \on{Tot}^\bullet \sW(T_\F EG)_M\to \Om^\bullet(B_\bullet G)_M\]
is a cochain map, and is a homotopy inverse to $(1+k\delta)^{-1}\kappa^*=\kappa^*$.
Here we used that $k\delta$ vanishes on the range of $\kappa^*$, for degree reasons. 
On the other hand, by Proposition \ref{prop:hmk}, the map $\iota_0^*\circ  (1+\d\circ h)^{-1}$ is homotopy inverse to 
$\pi^*_0$. 
\end{proof}
\begin{remark}
Once again, we can write this `reverse Van Est map' as a zig-zag: In bidegree $(p,q)$, it reads as 
\[ (-1)^p j_p^* \circ (\delta k)^p\circ \pi_0^*\colon 
\sW^{p,q}(A)\to \Om^q(B_p G)_M.\]
\end{remark}
The following result is due to Weinstein-Xu \cite{wei:ext} in the case $q=0$, and to Bursztyn-Cabrera \cite{bur:mul}
in the general case.
\begin{proposition}
Let $G\rra M$ be a local Lie groupoid. 
In bidegrees $(p,q)$ with $p=0,1$, the map $\on{VE}_M\colon \Om^q(B_p G)_M\to \sW^{p,q}(A)$ 
restricts to an isomorphism on $\delta$-cocycles. 
Similar assertions hold for  global Lie groupoids with $1$-connected $\tz$-fibers.
\end{proposition}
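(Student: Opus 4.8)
The plan is to fix the de Rham degree $q$ and regard the localized Van Est map as a cochain map $f=\on{VE}_M\colon(\Om^q(B_\bullet G)_M,\delta)\to(\sW^{\bullet,q}(A),\d_{CE})$, these being the complexes obtained from the two bidifferential spaces by freezing $q$ (recall that $\on{VE}$ intertwines $\delta$ with $\d_{CE}$). By Proposition \ref{prop:localInv} this $f$ is a homotopy equivalence, and being surjective it admits a genuine cochain right inverse $\Phi$ (the morphism of bidifferential spaces noted just after the definition of $\on{VE}_M$); for fixed $q$ one has $f\circ\Phi=\on{id}$ and $\Phi$ intertwines $\d_{CE}$ with $\delta$. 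First I would use these two facts to split $f$: the composite $\Phi\circ f$ is an idempotent cochain map, so $\Om^q(B_\bullet G)_M\cong \Phi(\sW^{\bullet,q}(A))\oplus N$ as complexes, where $N=\ker f$, and under this decomposition $f$ is the projection onto the first summand (identified with $\sW^{\bullet,q}(A)$ via $\Phi$). Since $f$ is a homotopy equivalence, the complement $N$ is acyclic.

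It then follows formally that $f$ restricts to an isomorphism on $\delta$-cocycles in bidegree $(p,q)$ precisely when $Z^p(N)=0$, the space of $\delta$-cocycles of $N$ in degree $p$: indeed $Z^p(\Om^q(B_\bullet G)_M)=\Phi(Z^p(\sW^{\bullet,q}(A)))\oplus Z^p(N)$, and $f$ carries the first summand isomorphically onto the $\d_{CE}$-cocycles while annihilating the second. For $p=0$, acyclicity gives $Z^0(N)=H^0(N)=0$ directly. For $p=1$, acyclicity gives $Z^1(N)=\delta(N^0)$, so it suffices to check $N^0=0$; but in bidegree $(0,q)$ the zig-zag defining $\on{VE}_M$ has length zero, so $\on{VE}_M=\iota_0^*\kappa_0^*=(\kappa_0\circ\iota_0)^*=\on{id}$ on $\Om^q(M)$, whence $N^0=\ker(\on{id})=0$ and $Z^1(N)=0$. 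This proves the isomorphism on $\delta$-cocycles for $p=0,1$. The same computation makes transparent why the statement stops at $p=1$: for $p\ge 2$ one only has $Z^p(N)=\delta(N^{p-1})$, and $N^{p-1}$ is in general nonzero.

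For the global statement I would factor $\on{VE}=\on{VE}_M\circ\rho$ through the localization $\rho\colon\Om^q(B_\bullet G)\to\Om^q(B_\bullet G)_M$. By the local result just proven it is then enough to show that $\rho$ restricts to an isomorphism on $\delta$-cocycles in bidegrees $(0,q)$ and $(1,q)$. The case $p=0$ is immediate, since $B_0G=M$ and $\rho$ is the identity there. The case $p=1$ is the heart of the matter and the step I expect to be the main obstacle: a $\delta$-cocycle in $\Om^q(B_1G)$ is a multiplicative $q$-form on $G$, and I must show that passage to the germ along $M$ is bijective on such forms. Injectivity uses connectedness of the $\tz$-fibers, since a multiplicative form vanishing near $M$ vanishes identically, multiplicativity propagating the vanishing from the units along any path in a $\tz$-fiber. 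Surjectivity is where $1$-connectedness genuinely enters: a germ of a multiplicative form integrates to a global one exactly when the $\tz$-fibers are simply connected, so that the integration is free of monodromy; this is the integration result of Bursztyn--Cabrera \cite{bur:mul}. I expect this globalization, rather than the homological bookkeeping, to be the delicate point.
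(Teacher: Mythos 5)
Your local argument is correct but follows a genuinely different route from the paper's. The paper works geometrically: in bidegree $(0,q)$ it identifies the $\delta$-cocycles with locally $G$-invariant $q$-forms and the $\d_{CE}$-cocycles with $A$-invariant $q$-forms, observes that these coincide, deduces that $\on{VE}_M$ is an isomorphism on $0$-cocycles and hence on $1$-coboundaries, and then combines this with the isomorphism on cohomology from Proposition \ref{prop:localInv} to get the isomorphism on $1$-cocycles. You instead use the strict cochain right inverse $\Phi$ to split $\Om^q(B_\bullet G)_M\cong \Phi(\sW^{\bullet,q}(A))\oplus N$ with $N=\ker(\on{VE}_M)$ acyclic, and reduce both cases to $N^0=0$, i.e.\ to the observation that $\on{VE}_M$ is the identity in bidegree $(0,q)$. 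This is tidy: the coincidence of locally $G$-invariant and $A$-invariant forms falls out as a corollary rather than being an input. The cost is that you rely on $\Phi$ being a genuine (not just up-to-homotopy) right inverse and a cochain map, which the paper does assert but which requires choosing a germ of a retraction along the $\tz$-fibers; the paper's version of the argument needs only the quasi-isomorphism property at that stage. Your bookkeeping with $Z^p(N)$ is sound, and your remark on why the statement stops at $p=1$ is accurate.

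The one genuine gap is in the global case at $p=0$, which you dismiss as ``immediate since $\rho$ is the identity on $B_0G=M$.'' The map of ambient spaces is indeed the identity, but the two cocycle conditions are imposed by differentials landing in different degree-one spaces: a $\delta$-cocycle in $\Om^q(B_0G)$ is a globally $G$-invariant form ($\sz^*\alpha=\tz^*\alpha$ on all of $G$), whereas a $\delta$-cocycle in $\Om^q(B_0G)_M$ only requires this identity near the units. Injectivity of $\rho$ on cocycles is therefore free, but surjectivity requires showing that a locally $G$-invariant form is globally $G$-invariant, and this is exactly where $0$-connectedness of the $\tz$-fibers enters (any $g$ is a product of elements near $M$ within its fiber). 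The paper makes this point explicitly; without it your reduction does not close. Your treatment of the $p=1$ global case is correct and matches the paper's: injectivity of $\rho$ on multiplicative forms from connectedness, surjectivity from $1$-connectedness via the extension of germs of multiplicative forms, which the paper likewise invokes without proof.
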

\begin{proof}
On $\Om^q(B_0 G)_M=\sW^{0,q}(A)_M=\Om^q(M)$, the map $\on{VE}_M$ is just the identity map. 
The space $\ker(\delta)\subseteq \sW^{0,q}(A)_M$ 
consists of (locally) $G$-invariant q-forms, while $\ker(\d')$
consists of $q$-forms that are $A$-invariant.
But these two spaces coincide. It follows that  $\on{VE}_M$ restricts to an isomorphism on $\delta$-cocycles in bidegree 
$(0,q)$, as well as on  $\delta$-coboundaries in bidegree 
$(1,q)$. Since $\on{VE}_M$ induces an isomorphism in cohomology for the differentials $\delta,\d'$, it must then also restrict to an isomorphism 
on 1-cocycles. For global Lie groupoids $G\rra M$, consider the quotient map $\Om^q(B_p G)\to 
\Om^q(B_p G)_M$.
A $\delta$-cocycle in $\Om^q(B_0 G)$ is a (globally) $G$-invariant form; if $G$ is $0$-connected 
this is the same as a locally $G$-invariant form, i.e. a cocycle in $\Om^q(B_0 G)_M$. 
A $\delta$-cocycle in $\Om^q(B_1 G)$ is a multiplicative form on $G$.
Such a form is uniquely determined by its restriction to an arbitrarily small open 
neighborhood of $M$ in $G$, i.e., by its germ. Hence the map $\Om^q(B_1 G)\to \Om^q(B_1 G)_M$ is injective on $\delta$-cocycles. 
If the $\tz$-fibers are $1$-connected, then any germ (along $M$) of a multiplicative form extends uniquely 
to a global multiplicative form.  Hence the map is also surjective in that case. 
\end{proof}

\begin{remark}
The prescription in \cite{wei:ext}  is equivalent to the one given here:
Any cocycle $\alpha\in \sC^1(A)=\Gamma(A^*)$ defines a closed left-invariant foliated 1-form 
$\alpha^L\in \Om^1_\F(G)$, for the foliation given by the target map. If the $\tz$-fibers are simply connected, 
one obtains a well-defined function $f\in C^\infty(G)$, such that $f(g)$ is the integral of $\alpha^L$  from 
$\tz(g)$ to $g$, along any path in the $\tz$-fiber. This function $f$ is multiplicative.
\end{remark}

For a global Lie groupoid, one has Crainic's Van Est theorem:
\begin{theorem}[Crainic \cite{cra:dif}] 
Suppose $G\rra M$ is a Lie groupoid with $n$-connected $\tz$-fibers. 
Then the Van Est map $\on{VE}\colon C^\infty(B_\bullet G)\to \sC^\bullet(A)$ induces an isomorphism in cohomology 
in degrees $p\le n$. For $p=n+1$ the map in cohomology is injective, with image the classes $[\om]$ such that for all $x\in M$, the 
integral of $\om$ (regarded as a left-invariant foliated form) over any $n+1$-sphere in $\tz^{-1}(x)$ is zero. 
\end{theorem}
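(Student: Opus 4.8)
The plan is to reduce the statement to a cohomological analysis of the map $\kappa_\bullet^*$ alone, and then to run the spectral sequence of the Van Est double complex $\sC^\bullet(T_\F E_\bullet G)$ with respect to the \emph{opposite} filtration from the one used to define $\on{VE}$. Since $\on{VE}=\iota_0^*\circ(1+\d\circ h)^{-1}\circ\kappa_\bullet^*$ and, by Proposition \ref{prop:hmk}, the first two factors assemble into a homotopy equivalence $\on{Tot}^\bullet\sC(T_\F EG)\to\sC^\bullet(A)$ with homotopy inverse $\pi_0^*$, the map $\on{VE}$ induces on cohomology the same map (up to this fixed isomorphism) as $\kappa_\bullet^*\colon H^\bullet(C^\infty(B_\bullet G),\delta)\to H^\bullet(\on{Tot}^\bullet\sC(T_\F EG))$. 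Thus it suffices to show that $\kappa_\bullet^*$ is an isomorphism on cohomology in degrees $\le n$, is injective in degree $n+1$, and to identify its image there.

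First I would filter $\on{Tot}^\bullet\sC(T_\F EG)$ by the simplicial degree $r$, so that the associated graded differential is $\d=\pm\d_{CE}$. As noted earlier, for fixed $r$ the complex $(\sC^\bullet(T_\F E_rG),\d_{CE})$ is the foliated de Rham complex $\Om^\bullet_\F(E_rG)$ of the fibration $\kappa_r\colon E_rG\to B_rG$, so the $E_1$-page is $E_1^{r,s}=H^s_\F(E_rG)$ with the induced simplicial differential $\delta$. The geometric input is that the fiber of $\kappa_r$ over a $p$-arrow with leading base point $m_0$ is canonically diffeomorphic to the target fiber $\tz^{-1}(m_0)$ (an element $(a_0,\dots,a_r)$ of such a fiber being determined by $a_0\in\tz^{-1}(m_0)$). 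Assuming the $\tz$-fibers are $n$-connected, a fiberwise de Rham argument then gives $H^0_\F(E_rG)=C^\infty(B_rG)$, $H^s_\F(E_rG)=0$ for $1\le s\le n$, and $H^{n+1}_\F(E_rG)=\Gamma(B_rG,\mathcal{H}^{n+1})$, where $\mathcal{H}^{n+1}$ is the vector bundle over $B_rG$ with fiber $H^{n+1}(\tz^{-1}(m_0))$.

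Hence on the $E_1$-page only the row $s=0$ survives in total degrees $\le n$, and through $\kappa_\bullet^*$ this row is precisely the groupoid cochain complex $(C^\infty(B_\bullet G),\delta)$. Since the rows $1\le s\le n$ vanish, no higher differential can alter total degrees $\le n$, so $\kappa_\bullet^*$ is an isomorphism there; this yields the isomorphism for $\on{VE}$ in degrees $p\le n$. In total degree $n+1$ the only surviving contributions are $E_\infty^{n+1,0}=H^{n+1}(C^\infty(B_\bullet G),\delta)$ and a subquotient $E_\infty^{0,n+1}$ of $\Gamma(M,\mathcal{H}^{n+1})$; because the intermediate rows vanish, the filtration of $H^{n+1}(\on{Tot})$ collapses to the extension $0\to E_\infty^{n+1,0}\to H^{n+1}(\on{Tot})\to E_\infty^{0,n+1}\to 0$, with $\im(\kappa_\bullet^*)=E_\infty^{n+1,0}$. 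This proves injectivity in degree $n+1$, and shows that a class lies in the image exactly when its edge component in $E_\infty^{0,n+1}$ vanishes.

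It remains to translate this edge condition into the sphere-integral statement. Under the identification $H^\bullet(\on{Tot})\cong H^\bullet(\sC(A))$ by $\pi_0^*$, and using $T_\F E_0G=\sz^*A=\ker T\tz$ together with $\pi_0^*\om=\om^L$ (the left-invariant foliated form), the edge map sends the class of a cocycle $\om\in\sC^{n+1}(A)$ to the leafwise cohomology class $x\mapsto[\,\om^L|_{\tz^{-1}(x)}\,]\in H^{n+1}(\tz^{-1}(x))$. Since each $\tz$-fiber is $n$-connected, the Hurewicz theorem identifies $H_{n+1}(\tz^{-1}(x))$ with $\pi_{n+1}(\tz^{-1}(x))$, so a real class in $H^{n+1}(\tz^{-1}(x))$ vanishes iff it integrates to zero over every map $S^{n+1}\to\tz^{-1}(x)$, which is exactly the stated condition. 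The main obstacle is the computation of $H^s_\F(E_rG)$ in the second step, i.e.\ the fiberwise de Rham theorem identifying the leafwise cohomology of $\kappa_r$ with sections of the bundle $\mathcal{H}^s$ of fiberwise cohomology in the range $s\le n+1$; this is where $n$-connectivity genuinely enters (through a Poincar\'e-lemma-with-parameters along the fibers and the smoothness and local triviality of $\mathcal{H}^{n+1}$), and it is the technical heart of Crainic's original argument.
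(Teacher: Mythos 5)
You should first note that the paper does not actually reprove this theorem: it is attributed to Crainic, and what the text supplies is only a remark showing how, \emph{granted} that $n$-connectivity of the $\tz$-fibers forces the $\d$-cohomology of $\sC^{\bullet}(T_\F E_\bullet G)$ to vanish in bidegrees $(r,s)$ with $1\le s\le n$, the Perturbation Lemma applied to the truncated complex $\tau_{\le n}\sC^\bullet(T_\F E_\bullet G)$ produces an explicit cochain-level quasi-inverse $j^*\circ(1+\delta k)^{-1}\circ\pi^*$ in degrees $\le n$; the degree $n+1$ statement is left entirely to the citation. Your spectral-sequence argument is therefore a genuinely different (and more complete) presentation — it is essentially Crainic's original proof — and the reduction via Proposition \ref{prop:hmk} to the map induced by $\kappa_\bullet^*$, the identification of the fiber of $\kappa_r$ with $\tz^{-1}(m_0)$, and the bookkeeping on the $E_1$-page (including $E_\infty^{n+1,0}=E_2^{n+1,0}$ and the two-step filtration in total degree $n+1$) are all correct. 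What each approach buys: yours yields the full statement including the degree $n+1$ image characterization, while the paper's perturbation-theoretic remark yields an \emph{explicit} inverse on cochains in degrees $\le n$, which a spectral sequence does not. Both rest on the same unproven geometric input, and you are right to flag it as the technical heart; but be aware that your phrasing overstates what is available and slightly understates what is needed. The sheaf $\mathcal{H}^{n+1}$ of fiberwise cohomologies need not be a finite-rank vector bundle (the $\tz$-fibers may be noncompact), so the clean identification $H^{n+1}_\F(E_rG)=\Gamma(B_rG,\mathcal{H}^{n+1})$ should not be asserted; what your edge-map argument actually requires is only the injectivity statement that a $\d_\F$-closed foliated $(n+1)$-form on $G$ which is exact on every single $\tz$-fiber admits a \emph{smooth} family of fiberwise primitives, i.e.\ is leafwise exact. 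This, together with the leafwise Poincar\'e lemma with parameters in degrees $1\le s\le n$, is where the $n$-connectivity is genuinely spent, and a complete proof must supply it (Crainic does, via local triviality of the submersions and a partition-of-unity/Mayer--Vietoris argument); as written your proposal is a correct reduction of the theorem to that lemma rather than a proof of it.
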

(A generalization to $\Om(BG)$ was obtained by Abad-Crainic in \cite{aba:wei}.)
Using the homological perturbation theory, one can construct the inverse in degrees $\le n$ on the level of cochains, given a homotopy 
operator. The assumption that the $\tz$-fibers are $n$-connected implies that the fibers of any principal $G$-bundle are 
$n$-connected. In particular, this applies to $\kappa_r\colon E_rG\to B_rG$. It follows that
$\sC^{\bullet}(T_\F E_\bullet G)$ has vanishing $\d$-cohomology in bidegree $(r,s)$ for all $s\le n$.
Let 
\[ \tau_{\le n}\sC^{\bullet}(T_\F E_\bullet G)\]
be the truncated foliated de Rham complex for $G$, given by $\sC^{s}(T_\F E_rG)$ in degree $s<n$ and by 
$\sC^{n}(T_\F E_rG)\cap \ker(\d^n)$ in degree $n$. The truncated complex has vanishing $\d$-cohomology 
in degrees $(r,s)$ with $s>0$.  Hence there exists a homotopy operator 
\[ k\colon \tau_{\le n}\sC^{s}(T_\F E_rG)\to \tau_{\le n}\sC^{s-1}(T_\F E_rG)\]
with $k\d+\d k=\on{id}-\kappa_r^*j_r^*$. By the Perturbation Lemma, the composition 
\[ j^*\circ (1+\delta k)^{-1}\colon \tau_{\le n}\sC^{s}(T_\F E_rG) \to  C^\infty(B_r G)\]
is a cochain map for the total differential. It gives the desired cochain map
\[ j^*\circ (1+\delta k)^{-1}\circ \pi^*\colon \tau_{\le n}\sC^p(A) \to  C^\infty(B_p G).\]

\section{Explicit formulas for the Van Est map}
Until now, we expressed the Van Est map in terms of the Van Est double complex. We will now derive more explicit 
formulas, thus confirming that this definition agrees with those of Weinstein-Xu \cite{we:sy} and Abad-Crainic \cite{aba:wei}.  We will directly consider $\Om^\bullet(B_\bullet G)$; the results for $C^\infty(B_\bullet G)$ will be special cases. 
\subsection{The Lie algebroid $T_\F G$}
Let $G$ be a Lie groupoid with Lie algebroid $A=\on{Lie}(G)$. 
Let $\ca{F}$ be the foliation of $E_0G=G$ defined by the 
submersion $\kappa_0=\tz$, and let $T_\F G$ be the corresponding Lie algebroid. 
Recall that any $X\in \Gamma(A)$ induces derivations 
\[ \i_S(X),\ \i_K(X),\ \i_{CE}(X),\ \L(X)\] 
on $\sW^{\bullet,\bullet}(A)$. 
The left-invariant vector field $X^L\in \Gamma(T_\F G)$ defines similar derivations of $\sW^{\bullet,\bullet}(T_\F G)$.
The inclusion $\iota\colon M\to G$ lifts to a morphism of vector bundles 
$ A\to T_\F G$, defining a pullback map 
$\iota^*\colon \sW^{\bullet,\bullet}(T_\F G)\to \sW^{\bullet,\bullet}(A)$, with 
\[ \iota^*\circ \d_K=\d_K\circ \iota^*,\ \ \ \iota^*\circ \i_S(X^L)=\i_S(X)\circ \iota^*,\ \ \ \iota^*\circ \i_K(X^L)=\i_K(X)\circ \iota^*.\]
On the other hand, since $A\to T_\F G$ is not a Lie algebroid morphism, the map $\iota^*$ does not intertwine 
$\d_{CE},\ \L(X),\ \iota_{CE}(X)$ (for $X\in\Gamma(A)$) with the corresponding derivations of $\sW(T_\F G)$, in general. 
Instead we have
\begin{lemma} For all $X\in\Gamma(A)$, 
\[ \iota^*\circ \i_{CE}(X^L-X^R)=\i_{CE}(X)\circ \iota^*,\ \ \ \iota^*\circ \L(X^L-X^R)=\L(X)\circ \iota^*.\]
\end{lemma}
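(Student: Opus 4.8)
The plan is to reduce both identities to a single statement at the level of the Chevalley--Eilenberg algebras $\sC(T_\F G)=\Om_\F(G)$ and $\sC(A)$, and then to check that statement on algebra generators. Recall from Subsection \ref{subsec:deri} that, on $\sW(A)$, the operators $\L(X)$ and $\i_{CE}(X)$ are $l(\L(X))$ and $j(\L(X))$, built from the single degree-zero derivation $\L(X)=[\i(X),\d_{CE}]$ of $\Gamma(\wedge A^*)$. Likewise $\L(X^L-X^R)=l(D)$ and $\i_{CE}(X^L-X^R)=j(D)$ on $\sW(T_\F G)$, where $D=\L(X^L-X^R)$ is the Lie derivative acting on the foliated forms $\Om_\F(G)$. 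This derivation is defined because $X^L$ and $X^R$ each preserve the foliation $\F=\ker T\tz$ (the first is tangent to the leaves, the second generates the leaf-preserving left action), so that $D\in\Gamma(\mf{der}^0(\wedge (T_\F G)^*))$. By the naturality of $l$ and $j$ recorded in Subsection \ref{subsec:deri}, both claimed identities follow at once from the single identity
\begin{equation*}
\iota^*\circ \L(X^L-X^R)=\L(X)\circ \iota^*\colon \Om_\F(G)\to \sC(A). \tag{$\star$}
\end{equation*}

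Since $\iota^*$ is an algebra morphism and the two Lie derivatives are derivations, both sides of $(\star)$ are $\iota^*$-derivations; it therefore suffices to verify $(\star)$ on a local set of algebra generators of $\Om_\F(G)$, namely functions $f\in C^\infty(G)$ and leafwise-exact one-forms $\d_\F f$. I will use that $\iota^* g=g|_M$ for $g\in C^\infty(G)$ and that $(\iota^*\d_\F f)(Z)=(Z^Lf)|_M$ for $Z\in\Gamma(A)$, where $Z^L|_M=Z$. In degree zero, $(\star)$ reads $((X^L-X^R)f)|_M=\a(X)(f|_M)$, which is immediate from the fact (Subsection \ref{subsec:liefun}) that $X^L-X^R$ is tangent to $M$ and restricts there to $\a(X)$.

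The substance is the degree-one check. Since $\L(X^L-X^R)$ commutes with the leafwise differential $\d_\F$, the left-hand side of $(\star)$ applied to $\d_\F f$ and paired with $Z\in\Gamma(A)$ equals $(Z^L(X^L-X^R)f)|_M$, while the right-hand side, via the Chevalley--Eilenberg formula $(\L(X)\beta)(Z)=\a(X)(\beta(Z))-\beta([X,Z])$ with $\beta=\iota^*\d_\F f$, equals $\a(X)((Z^Lf)|_M)-([X,Z]^Lf)|_M$. The two are reconciled by commuting $Z^L$ past $X^L-X^R$ using the relations $[Z^L,X^L]=[Z,X]^L$ and $[Z^L,X^R]=0$, which rewrites $(Z^L(X^L-X^R)f)|_M$ as $((X^L-X^R)(Z^Lf))|_M-([X,Z]^Lf)|_M$, and then using tangency of $X^L-X^R$ to $M$ once more to replace the first summand by $\a(X)((Z^Lf)|_M)$.

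I expect this commutator computation to be the main point: it is exactly where the failure of $A\to T_\F G$ to be a Lie algebroid morphism would normally obstruct intertwining $\d_{CE}$, and the specific combination $X^L-X^R$ succeeds precisely because of the cancellation $[Z^L,X^R]=0$ together with the tangency to $M$. Once $(\star)$ holds on these generators, the $\iota^*$-derivation property propagates it to all of $\Om_\F(G)$, and the naturality of $l$ and $j$ then yields both stated identities on $\sW(T_\F G)$.
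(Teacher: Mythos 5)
Your argument is correct and is essentially the paper's own proof: the paper likewise reduces both identities, via the naturality of $l$ and $j$ under maps intertwining derivations, to the single statement that $\iota^*$ intertwines $\L(X^L-X^R)$ on $\Gamma(\wedge T_\F^*G)$ with $\L(X)$ on $\Gamma(\wedge A^*)$, which it justifies in one line by observing that $[X^L-X^R,\cdot]$ on $\Gamma(T_\F G)$ induces $[X,\cdot]$ on $\Gamma(A)$. Your explicit check on generators (using $[Z^L,X^L]=[Z,X]^L$, $[Z^L,X^R]=0$, and the tangency of $X^L-X^R$ to $M$ with restriction $\a(X)$) simply spells out the detail the paper leaves implicit.
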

To explain the the left hand side of these equations, note that any vector field $Y\in \mf{X}(G)$ in the normalizer of $\Gamma(T_\F G)$ (i.e., such that $[Y,\cdot]$ preserves $\Gamma(T_\F G)$) defines an infinitesimal automorphism of $T_\F G$, giving rise to a derivation $\L(Y)$ of $\Gamma(\wedge T_\F^* G)$, and  hence to derivations $\i_{CE}(Y)=j(\L(Y))$ and $\L(Y)=l(\L(Y))$ 
of $\sW^{\bullet,\bullet}(A)$. This applies to the vector fields $X^L$ as well as to the vector fields $X^R$, hence also 
to the vector field $Y=X^L-X^R$ (generating the adjoint action). The Lemma follows since 
$[X^L-X^R,\cdot]$ on $\Gamma(T_\F G)$ induces $[X,\cdot]$ on $\Gamma(A)$.
It will be convenient to introduce the operator 
\begin{equation}\label{eq:D}
 \D\colon \sW^{\bullet,\bullet}(T_\F G)\to \sW^{\bullet+1,\bullet}(A),\ \ \D=\d_{CE}\circ \iota^*-\iota^*\circ \d_{CE},\end{equation}
measuring the failure of $\iota^*$ to be a cochain map for $\d_{CE}$. 
\begin{lemma}\label{lem:contractors}
For all $X\in\Gamma(A)$,
\begin{equation}\label{eq:contractors}
\begin{split}
\i_K(X)\circ \D+\D\circ \i_K(X^L)&=\iota^*\circ \L(-X^R),\\ 
\i_S(X)\circ \D-\D\circ \i_S(X^L)&=\iota^*\circ \i_{CE}(-X^R).
\end{split}
\end{equation}
\end{lemma}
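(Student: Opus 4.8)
The plan is to prove both identities in parallel, since they share the same structure: each left-hand side is, up to the sign dictated by parity, the graded commutator of a contraction operator with $\D$, twisted by the fact that on the source algebra $\sW(T_\F G)$ the relevant contraction is taken along the left-invariant lift $X^L\in\Gamma(T_\F G)$ rather than along $X$. The difference in sign between the two left-hand sides ($+$ for $\i_K$ versus $-$ for $\i_S$) merely reflects the parities of the operators: $\i_K(X)$ has odd total degree $-1$, whereas $\i_S(X)$ has even total degree $-2$.

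First I would expand $\D=\d_{CE}\circ\iota^*-\iota^*\circ\d_{CE}$ in each of the two summands, producing four terms. In the two terms where $\iota^*$ sits to the right of a contraction along $X^L$, I would apply the intertwining relations $\iota^*\circ\i_K(X^L)=\i_K(X)\circ\iota^*$ and $\iota^*\circ\i_S(X^L)=\i_S(X)\circ\iota^*$ (established just before the operator $\D$ was introduced) to move $\iota^*$ to the left, thereby converting the source-side contraction along $X^L$ into the target-side contraction along $X$. After this rearrangement the four terms regroup into two pieces: a $\d_{CE}$-commutator on the target algebra $\sW(A)$, of the shape $[\i_\bullet(X),\d_{CE}]\circ\iota^*$, and a $\d_{CE}$-commutator on the source algebra $\sW(T_\F G)$, of the shape $\iota^*\circ[\i_\bullet(X^L),\d_{CE}]$.

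Next I would evaluate these commutators via the Cartan relations, which hold for any Lie algebroid and hence for both $A$ and $T_\F G$. For the first identity this is immediate from the listed relation $[\i_K(Y),\d_{CE}]=\L(Y)$, reducing the expression to $\L(X)\circ\iota^*-\iota^*\circ\L(X^L)$. For the second identity I need the companion relation $[\i_S(Y),\d_{CE}]=-\i_{CE}(Y)$, which is not among the explicitly listed relations; I would derive it from the graded Jacobi identity applied to $\i_S(Y),\,\d_K,\,\d_{CE}$, using $[\i_S(Y),\d_K]=\i_K(Y)$, $[\i_K(Y),\d_{CE}]=\L(Y)$, $[\i_{CE}(Y),\d_K]=\L(Y)$ and $[\d_K,\d_{CE}]=0$ (the explicit formulas in Example \ref{ex:se0} give an independent check). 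This reduces the second expression to the analogous combination $\i_{CE}(X)\circ\iota^*-\iota^*\circ\i_{CE}(X^L)$, up to the overall sign fixed by the parity of $\i_S$.

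Finally I would invoke the Lemma just preceding the statement, in the forms $\L(X)\circ\iota^*=\iota^*\circ\L(X^L-X^R)$ and $\i_{CE}(X)\circ\iota^*=\iota^*\circ\i_{CE}(X^L-X^R)$, to cancel the $X^L$-contributions and leave only the $X^R$-terms, yielding $\iota^*\circ\L(-X^R)$ and $\iota^*\circ\i_{CE}(-X^R)$ respectively. The hard part will be the sign bookkeeping: because $\i_K(X)$ and $\i_S(X)$ have opposite parities, their graded commutators with $\d_{CE}$ come out with opposite signs, and it is precisely the interaction of these signs with the $R$-twist arising from the choice $X\mapsto X^L$ (rather than $X\mapsto X^R$) in the definition of $\D$ and of the homotopy that must be tracked carefully in order to land exactly on the stated right-hand sides.
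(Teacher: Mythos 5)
Your argument is essentially the paper's own proof: the paper likewise expands $\D=\d_{CE}\circ\iota^*-\iota^*\circ\d_{CE}$, moves $\iota^*$ past the contraction using $\iota^*\circ\i_S(X^L)=\i_S(X)\circ\iota^*$, evaluates the resulting commutators with $\d_{CE}$ by the Cartan relations, and then invokes the preceding Lemma to replace $\i_{CE}(X)\circ\iota^*$ by $\iota^*\circ\i_{CE}(X^L-X^R)$, leaving exactly the $-X^R$ term. The one structural difference is that the paper carries out this computation only for the $\i_S$-identity and then obtains the $\i_K$-identity in one line by taking the graded commutator of the result with $\d_K$ (using that $\D$ commutes with $\d_K$, $[\i_S(X),\d_K]=\i_K(X)$ and $[\i_{CE}(X),\d_K]=\L(X)$), which spares the second parallel computation; your parallel treatment is equally valid. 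One point you must still nail down: the paper's computation uses $\i_S(X)\circ\d_{CE}=\i_{CE}(X)+\d_{CE}\circ\i_S(X)$, i.e.\ $[\i_S(X),\d_{CE}]=+\i_{CE}(X)$, which is opposite to the relation $[\i_S(Y),\d_{CE}]=-\i_{CE}(Y)$ you propose to derive from the Jacobi identity; with your sign the second identity would come out as $\iota^*\circ\i_{CE}(X^R)$ rather than the stated $\iota^*\circ\i_{CE}(-X^R)$. Since the target has a definite sign, you cannot leave this "up to the overall sign": you should fix the convention by an independent check (e.g.\ on the generators in Example \ref{ex:se0}) before declaring the bookkeeping closed.
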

\begin{proof}
Using the above commutation relations we calculate
\[ \begin{split}\i_S(X)\circ \D&=\i_S(X)\circ (\d_{CE}\circ \iota^*-\iota^*\circ \d_{CE})\\
&=(\i_{CE}(X)+\d_{CE}\circ \i_S(X))\circ \iota^*-\iota^*\circ \i_S(X^L)\circ \d_{CE}\\
&=\iota^*\circ \i_{CE}(X^L-X^R)+\d_{CE}\circ \iota^*\circ \i_S(X^L)-\iota^*\circ \i_{CE}(X^L)-\iota^*\circ \d_{CE}\circ \i_S(X^L)\\
&=\iota^*\circ \i_{CE}(-X^R)+\D\circ \i_S(X^L).
\end{split}\]
which proves the second identity. The first follows by taking a commutator with $\d_K$.
\end{proof}
On elements $\phi\in \Om^q(G)=\sW^{0,\bullet}(T_\F G)$, 
these formulas become (for degree reasons)
\[ \begin{split}
\i_K(X)\D\phi&=\iota^*\circ \L(-X^R)\phi\in \Om^q(M),\\ 
\i_S(X) \D\phi&=\iota^*\circ \i(-X^R)\phi\in \Om^{q-1}(M),\end{split}\]
where $\i(-X^R)$ is the usual contraction operator on differential forms.

\subsection{A formula for the Van Est map}\label{sec:wexu}
The vector fields $-X^{i,R}\in\mf{X}(E_rG)$ are invariant under the principal $G$-action, hence they 
descend to vector fields $X^{i,\sharp}\in\mf{X}(B_rG)$. The $-X^{i,R}$ generate the $G$-action on 
$E_rG$ given by left multiplication on the $i$-th factor; similarly the $X^{i,\sharp}$ generate the 
following $G$-actions on $B_rG$,
\[ g.(g_1,\ldots,g_r)=(g_1,\ldots,g_{i-1},g_i g^{-1},g g_{i+1},g_{i+2},\ldots,g_r).\]
These define Lie derivatives and contractions on $\Om(B_rG)$, with 
\[ \kappa_r^*\circ \i(X^{i,\sharp})=\i_K(-X^{i,R})\circ \kappa_r^*,\ \ \ 
\kappa_r^*\circ \L(X^{i,\sharp})=\L(-X^{i,R})\circ \kappa_r^*.\]
For elements $\alpha\in \sW^{p,q}(A)$, $X_1,\ldots, X_p\in \Gamma(A)$ and all 
$n\le p$ we put
\[ \alpha(X_1,\ldots,X_n,\ol{X}_{n+1},\ldots,\ol{X}_{p}) 
=\i_S(X_p)\cdots \i_S(X_{n+1})\i_K(X_n)\cdots \i_K(X_1)\alpha\in \Om^{q-n}(M).\]
This expression is $C^\infty(M)$-linear in $X_1,\ldots,X_n$, but not in $X_{n+1},\ldots,X_p$, due to \eqref{eq:nottensorial}.
\begin{theorem}\label{th:explicit}
The Van Est map $\on{VE}\colon \Om^\bullet(B_\bullet G)\to \sW^{\bullet,\bullet}(A)$ is given by the following formula, 
for $\phi\in \Om^q(B_pG)$ and $X_1,\ldots,X_p\in\Gamma(A)$,
\[ \begin{split}
\lefteqn{\on{VE}(\phi)(X_1,\ldots,X_n,\ol{X}_{n+1},\ldots,\ol{X}_p)}\\&=\iota^*\sum_{s\in\mf{S}_p} \eps(s) 
\L(X_{s(1)}^{1,\sharp})\cdots \L(X_{s(n)}^{n,\sharp})\i(X_{s(n+1)}^{n+1,\sharp})\cdots \i(X_{s(p)}^{p,\sharp})\phi.
\end{split}\]
Here $\iota\colon M\to B_pG$ is the inclusion as constant $p$-arrows, and $\eps(s)$ is equal to $+1$ if the number of pairs 
$(i,j)$ with $1\le i< j\le n$ but $s(i)>s(j)$ is even, and equal to $-1$ if that number is odd.  
\end{theorem}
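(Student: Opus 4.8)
The plan is to compute the Van Est map on the level of the zig-zag representation established in the Remarks following the definition, namely
\[
\on{VE}(\phi)=(-1)^p\,\iota_0^*\circ(\d'\circ h)^p\circ\kappa_p^*\,\phi,
\]
and then to contract with the operators $\i_S$ and $\i_K$ (equivalently, to evaluate on $X_1,\dots,X_n,\ol{X}_{n+1},\dots,\ol{X}_p$) and push the contractions through the alternating string $(\d'\circ h)^p$. First I would fix the combinatorial bookkeeping: the homotopy operator $h=\sum_i(-1)^{i+1}(h_{r-1,i})^*$ inserts a copy of the unit at a specified slot, so each factor $\d'\circ h$ in the composition introduces a sum over an insertion position and one application of the Chevalley–Eilenberg differential $\d_{CE}$ on $T_\F E_\bullet G$. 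I expect that, after evaluating at the base point via $\iota_0^*$, only the ``fully degenerate'' insertion pattern survives, leaving a sum over the symmetric group $\mf{S}_p$ as the positions get permuted; the sign $\eps(s)$ should emerge from the combination of the homotopy signs $(-1)^{i+1}$, the $(-1)^p$ prefactor, and the Koszul signs generated when contractions are commuted past $\d'$.

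The key computational engine is Lemma \ref{lem:contractors}, which tells us precisely how $\i_K(X)$ and $\i_S(X)$ interact with the defect operator $\D=\d_{CE}\circ\iota^*-\iota^*\circ\d_{CE}$. Concretely, I would prove the formula by induction on $p$ (or equivalently on the length of the zig-zag). At each stage one peels off one factor $\d'\circ h$; using the intertwining relations
\[
\kappa_r^*\circ \i(X^{i,\sharp})=\i_K(-X^{i,R})\circ\kappa_r^*,\qquad
\kappa_r^*\circ\L(X^{i,\sharp})=\L(-X^{i,R})\circ\kappa_r^*,
\]
together with the identities in Lemma \ref{lem:contractors} relating $\i_K(X)\circ\D$, $\i_S(X)\circ\D$ to $\iota^*\circ\L(-X^R)$ and $\iota^*\circ\i(-X^R)$, one converts an abstract contraction $\i_S$ or $\i_K$ on the $\sW(A)$ side into a concrete Lie derivative or interior product $\L(X^{i,\sharp})$ or $\i(X^{i,\sharp})$ acting on the form $\phi\in\Om^q(B_pG)$, evaluated at constant $p$-arrows. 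The distinction between the two contraction types is exactly the distinction between the first $n$ indices (giving Lie derivatives $\L$) and the last $p-n$ indices (giving contractions $\i$), which matches the asymmetry recorded in the statement and the $C^\infty(M)$-linearity remark preceding it.

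The hard part will be controlling the signs and verifying that the sum collapses onto $\mf{S}_p$ with the stated sign $\eps(s)$, which counts inversions only among the first $n$ entries. The subtlety is that $\i_S$ has bidegree $(-1,-1)$ while $\i_K$ has bidegree $(-1,0)$, so commuting them past $\d'$ (bidegree $(0,1)$) produces Koszul signs that differ between the two types; moreover equation \eqref{eq:nottensorial} shows $\i_K$ is not tensorial, so the ordering of the $X_i$ genuinely matters for the $\L$-part but not for the $\i$-part. I would therefore organize the inductive step so that each transposition of two ``Lie-derivative'' slots contributes a single sign while transpositions involving ``contraction'' slots, or moves that commute the already-tensorial $\i_S$ factors, contribute none — thereby recovering precisely the inversion count on $\{1,\dots,n\}$. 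A useful consistency check, which I would include, is the case $n=p$ specializing to the $C^\infty(BG)\to\sC(A)$ formula \eqref{eq:expl1} of Weinstein–Xu, where $\eps(s)=\on{sign}(s)$; verifying that the general $\eps(s)$ degenerates to the full signature when $n=p$ confirms the sign conventions are correctly propagated.
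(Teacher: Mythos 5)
Your proposal is correct and follows essentially the same route as the paper: the paper likewise iterates $[\d',h]$ along the zig-zag (organizing the computation on decomposable elements $\phi_0\botimes\cdots\botimes\phi_r\botimes\alpha$ to obtain the intermediate formula $\iota_0^*\circ(1+[\d',h])^{-1}(\phi_0\botimes\cdots\botimes\phi_r\botimes\alpha)=\pm(\iota_0^*\phi_0)(\D\phi_1)\cdots(\D\phi_r)\alpha$) and then converts the contractions $\i_S,\i_K$ into $\L(-X^R)$ and $\i(-X^R)$ via Lemma \ref{lem:contractors} and the $\kappa_r$-relatedness of $-X^{i,R}$ and $X^{i,\sharp}$, exactly as you describe. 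The only cosmetic difference is that in the paper the sum over $\mf{S}_p$ arises from distributing the $p$ contraction operators over the $p$ factors $\D\phi_i$ (each of first degree $1$), rather than from the insertion positions of $h$.
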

Observe that the formula does not involve the generating vector fields for the $i=0$ action. 
\begin{remarks}
\begin{enumerate}
\item 
This formula is similar to the expression obtained in Abad-Crainic \cite[Proposition 4.1]{aba:wei}. However, in contrast to 
the result in \cite{aba:wei}, no recursion procedure is needed.
\item 
The same formula holds true for local Lie groupoids, using the complex $\Om^\bullet(B_\bullet G)_M$ of germs.
\item 
Restricting, we obtain the following formula for the 
Van Est map $C^\infty(B_\bullet G)\to \sC^\bullet(A)$:
\[ \on{VE}(f)(X_1,\ldots,X_r)=\sum_{s\in\mf{S}_r} \on{sign}(s) \L(X_{s(1)}^{1,\sharp})\cdots \L(X_{s(r)}^{r,\sharp})\, f\Big|_M\]
This is the formula given by Weinstein and Xu \cite{we:sy}.
\item Mehta points out in \cite[Section 6]{meh:sup}
that the formula in Theorem~\ref{th:explicit} can be obtained from that of Weinstein and Xu \cite{we:sy} (c.f. \cite[Definition 6.2.1]{meh:sup}), via an appropriate modification to the signs due to the Koszul sign rule. 
\end{enumerate}
\end{remarks}

The proof will require some preparation. 
To simplify notation, denote by  $\ol{\otimes}:=\otimes_{\Om(M)}$ the (algebraic) tensor product of modules over commutative graded algebra $\Om(M)$. We will use the pullback  $\sz^*$ to regard $\Om(G)$ as an $\Om(M)$-module;
there is a natural multiplication map (not to be confused with cup product)
\begin{subequations}
\begin{equation}\label{eq:botimes1} \Om^{q_0}(G)\botimes \cdots \botimes \Om^{q_r}(G)\to \Om^{q_0+\ldots+q_r}(E_rG),\ \ \ 
\phi_0\botimes\cdots \botimes \phi_r\mapsto \pr_0^*\phi_0\cdots \pr_r^*\phi_r.\end{equation}
The Weil algebra $\sW^{\bullet,\bullet}(A)$ is also a module over $\Om(M)$;
the pullback $\pi_r^*$ defines an embedding as a subspace of $\sW^{\bullet,\bullet}(T_\F E_rG)$. We 
obtain an injective map, with dense image
\begin{equation}\label{eq:botimes}
\Omega^{q_0}(G)\bar{\otimes}\cdots \bar{\otimes} \Omega^{q_r}(G)\botimes \sW^{p,q}(A)
\to \sW^{p,q_0+\ldots+q_r+q}(T_\F E_rG)
\end{equation}
\end{subequations}
For $\phi_i\in \Om(G)$ and $\alpha\in \sW(A)$, we will identify $\phi_0\botimes\cdots\botimes \phi_r\botimes\alpha$ with its image under this map. 
On the image of this map, the homotopy operator $h$, the differential $\d'=(-1)^r\d_{CE}$, and the homomorphism $R_\bullet=\pi_\bullet^*\circ \iota_\bullet^*$ 
read as 
\[ \begin{split}
h\big(\phi_0\bar{\otimes}\cdots \bar{\otimes}\phi_r\botimes \alpha\big)
=&\sum_{i=0}^{r-1}(-1)^{i+1}
\phi_0\bar{\otimes}\cdots \bar{\otimes}\phi_i\bar{\otimes}\underbrace{1 \bar{\otimes}\cdots \bar{\otimes}1}_{r-i-1}\botimes
\iota^*(\phi_{i+1}\cdots \phi_r)\alpha,\\
\d'(\phi_0\botimes\cdots \botimes \phi_r\botimes \alpha)=&(-1)^{q_0+\ldots+q_r}\sum_{j=0}^r \sum_\nu \phi_0\botimes \cdots\botimes
\L(X_\nu^L)\phi_j\botimes \cdots\botimes \phi_r\botimes \beta^\nu\alpha\\
&+(-1)^{q_0+\ldots+q_r+r} \phi_0\botimes\cdots \botimes \phi_r\botimes \d_{CE} \alpha\\
R\big(\phi_0\bar{\otimes}\cdots \bar{\otimes}\phi_r\botimes \alpha\big)
=&\big(\underbrace{1\botimes\cdots\botimes 1}_{r+1})\botimes \iota^*(\phi_0\cdots \phi_r)\alpha
\end{split}\]
Here the second formula is to be understood locally, in terms of a local frame $X_1,\ldots,X_k$ of sections of $A$, with dual sections 
$\beta^1,\ldots,\beta^k$ of $A^*$.  The last two formulas imply that 
\begin{equation}\label{eq:nagut}
[\d',R]\big(\phi_0\bar{\otimes}\cdots \bar{\otimes}\phi_r\botimes \alpha\big)=(-1)^r
\big(\underbrace{1\botimes\cdots\botimes 1}_{r+1})\botimes \D(\phi_0\cdots \phi_r)\alpha
\end{equation}
The following formula involves the restriction $\D\colon \Om^q(G)\to \sW^{1,q}(A)$ of the map \eqref{eq:D}.
\begin{proposition}\label{prop:formula}
We have the following formula, for $\phi_i\in \Om^{q_i}(G)$ and $\alpha\in  \sW^{p,q}(A)$
\begin{equation}\label{eq:formula1}
 [\d',h]\big(\phi_0\bar{\otimes}\cdots \bar{\otimes}\phi_r\botimes \alpha\big)=(-1)^r\sum_{i=0}^{r-1}(-1)^{i+q_0+\ldots+q_i} 
\phi_0\bar{\otimes}\cdots \bar{\otimes}\phi_{i}\bar{\otimes}\underbrace{1\bar{\otimes}\cdots \bar{\otimes}1}_{r-i-1}\botimes  
(\D(\phi_{i+1}\cdots \phi_r)\,\alpha).\end{equation}
\end{proposition}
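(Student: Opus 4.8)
The plan is to exploit the fact that the homotopy operator is a \emph{signed sum of pullbacks}, hence of algebra morphisms, and that the commutator of a derivation with an algebra morphism is again a (twisted) derivation. Writing $h=\sum_{i=0}^{r-1}(-1)^{i+1}(h_{r-1,i})^*$ with the maps of \eqref{eq:fullform2}, I would first record how these pullbacks act on the generators of the dense subspace \eqref{eq:botimes}. Since $\pr_j\circ h_{r-1,i}=\pr_j$ for $j\le i$, while $\pr_j\circ h_{r-1,i}$ is the constant map into units $\iota\circ\pi_{r-1}$ for $j>i$, and $\pi_r\circ h_{r-1,i}=\pi_{r-1}$, one recovers the stated formula for $h$:
\[ (h_{r-1,i})^*(\phi_0\botimes\cdots\botimes\phi_r\botimes\alpha)=\phi_0\botimes\cdots\botimes\phi_i\botimes\underbrace{1\botimes\cdots\botimes 1}_{r-1-i}\botimes\iota^*(\phi_{i+1}\cdots\phi_r)\,\alpha. \]
Tracking the simplicial-degree-dependent sign in $\d'=(-1)^r\d_{CE}$ (the simplicial degree drops by one under each $(h_{r-1,i})^*$, so the prefactor changes from $(-1)^r$ to $(-1)^{r-1}$), the anticommutator $[\d',h]=\d'h+h\d'$ collapses to
\[ [\d',h]=(-1)^r\sum_{i=0}^{r-1}(-1)^i\,[\d_{CE},(h_{r-1,i})^*]. \]

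Next I would observe that, because $(h_{r-1,i})^*$ is an algebra morphism and $\d_{CE}$ is an odd derivation, the operator $[\d_{CE},(h_{r-1,i})^*]$ is a derivation relative to the morphism $(h_{r-1,i})^*$ (an $(h_{r-1,i})^*$-twisted derivation), so it is determined by its values on the multiplicative generators $\pr_j^*\phi_j$ and $\pi_r^*\alpha$. A factor-by-factor computation then yields three facts: (i) it vanishes on $\pr_j^*\phi_j$ for $j\le i$, since there $(h_{r-1,i})^*$ acts by the identity on that factor and commutes with the foliated differential, which on such a factor reads $\d_{CE}(\pr_j^*\phi_j)=\sum_\nu\pr_j^*(\L(X_\nu^L)\phi_j)\,\beta^\nu$ as recorded in the formula for $\d'$ preceding the Proposition; (ii) it vanishes on $\pi_r^*\alpha$, because $\pi_r$ and $\pi_{r-1}$ are Lie algebroid morphisms and hence their pullbacks commute with $\d_{CE}$; and (iii) for $j>i$ it equals $\pi_{r-1}^*(\D\phi_j)$, which is exactly the defining property $\D=\d_{CE}\circ\iota^*-\iota^*\circ\d_{CE}$ of \eqref{eq:D} transported to $E_{r-1}G$.

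Finally I would reassemble the twisted-Leibniz expansion. Splitting $\phi_0\botimes\cdots\botimes\phi_r\botimes\alpha$ into the uncollapsed block $\phi_0,\ldots,\phi_i$ and the remainder, fact (i) forces the operator past the first block at the cost of the Koszul sign $(-1)^{q_0+\cdots+q_i}$, while facts (ii) and (iii), together with the observation that $\D$ is itself an $\iota^*$-twisted derivation, collect the contributions of the factors $j>i$ into the single term $\pi_{r-1}^*\big(\D(\phi_{i+1}\cdots\phi_r)\,\alpha\big)$. Substituting this into the reduced expression for $[\d',h]$ produces the prefactor $(-1)^{i+q_0+\cdots+q_i}$ and hence the claimed formula. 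The hard part will be the sign bookkeeping across steps (i)--(iii) and in the final reassembly: one must verify that the $\L(X_\nu^L)$-terms on the uncollapsed factors genuinely cancel (so that only $j>i$ survives), and that the Koszul signs in the twisted-Leibniz expansions of both $[\d_{CE},(h_{r-1,i})^*]$ and of $\D$ align to give precisely $(-1)^{i+q_0+\cdots+q_i}$; everything else is a routine application of the derivation and pullback calculus.
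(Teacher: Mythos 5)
Your argument is correct, and it reaches \eqref{eq:formula1} by a genuinely different route than the paper. The paper's proof stays at the level of the simplicial cup product: it invokes the $R$-twisted derivation property of $h$ from Proposition \ref{prop:homproperties}\eqref{it:3} to get the identity $[\d',h](x\cup y)=[\d',h]x\cup Ry+x\cup[\d',h]y-(-1)^{|x|}hx\cup[\d',R]y$, specializes to $x=\phi_0\bar\otimes 1$ (so that $[\d',h]x=0$ and $hx=-\phi_0$), and feeds in the precomputed formula \eqref{eq:nagut} for $[\d',R]$; this peels off $\phi_0$ and sets up a one-step recursion in the simplicial degree. You instead fix the simplicial degree, split $h$ into its summands $(h_{r-1,i})^*$, use that each is an algebra morphism of Weil algebras (property (e)/(g) of Section \ref{subsec:koszul}) so that $[\d_{CE},(h_{r-1,i})^*]$ is an $(h_{r-1,i})^*$-twisted derivation, and evaluate it on the multiplicative generators $\pr_j^*\phi_j$ and $\pi_r^*\alpha$. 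Your three factor-by-factor claims all check out: (i) holds because $\pr_j\circ h_{r-1,i}=\pr_j$ for $j\le i$ and $\pr_j\colon T_\F E_\bullet G\to T_\F G$ \emph{is} a Lie algebroid morphism (it sends the diagonal left-invariant generators to $X^L$), so the $\L(X_\nu^L)$-terms on the uncollapsed factors do cancel; (ii) holds because $\hat\pi_\bullet$ is a morphism of simplicial Lie algebroids; and (iii) holds because $\pr_j\circ h_{r-1,i}=\iota\circ\pi_{r-1}$ for $j>i$ at the level of the lifted vector bundle morphisms, which together with $\pr_j^*\circ\d_{CE}=\d_{CE}\circ\pr_j^*$ reproduces exactly $\pi_{r-1}^*(\D\phi_j)$; your reassembly via the $\iota^*$-twisted Leibniz rule for $\D$ then yields the Koszul sign $(-1)^{q_0+\cdots+q_i}$, and the reduction $[\d',h]=(-1)^r\sum_i(-1)^i[\d_{CE},(h_{r-1,i})^*]$ (correct, since the simplicial degree drops by one under $h$) supplies the remaining $(-1)^{r+i}$. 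What your route buys is self-containedness: no cup product, no operator $R$, no induction, and the content of \eqref{eq:nagut} is rederived generator by generator. What the paper's route buys is economy and reuse: the same structural inputs (Proposition \ref{prop:homproperties} and the $R$-derivation formalism) are precisely what is needed again for the multiplicativity statement on normalized cochains, and the recursion makes the subsequent computation of $[\d',h]^r$ in \eqref{eq:tosh} immediate.
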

\begin{proof}
Using that $h$ is an $R$-derivation, one obtains the following property of $[\d',h]$ under cup product:
\begin{equation}\label{eq:yada1}
 [\d',h](x\cup y)=[\d',h]x\cup R y +x\cup [\d',h]y
-(-1)^{|x|} hx\cup [\d',R] y.
\end{equation}
for $x,y\in \sW^{\bullet,\bullet}(T_\F E_\bullet G)$. Here $|x|$ denotes the total degree of $x$.
In particular, take $x=\phi_0\bar\otimes 1$, as in \eqref{eq:botimes1}, with $\phi_0\in\Om^{q_0}(G)$. We have $|x|=q_0+1,\ \ hx=-\phi_0,\ \ [\d',h]x=0$, 
and 
\[ x\cup y=(-1)^{q_0 m} \phi_0\botimes y\] 
for $y\in \sW^{\bullet,\bullet}(T_\F E_mG)$. Hence 
the formula \eqref{eq:yada1} gives
\[ [\d',h](\phi_0\botimes y)=(-1)^{q_0}\phi_0\botimes [\d',h]y-(-1)^{q_0(m-1)}\phi_0\cup [\d',R]y.\]
If $y=\phi_1\botimes\cdots \botimes \phi_r\botimes\alpha\in\sW(T_\F E_{r-1}G)$, then we obtain, using \eqref{eq:nagut}, 
\[ [\d',R]y=(-1)^{r-1}\underbrace{1\botimes\cdots\botimes 1}_r\botimes \D(\phi_1\cdots \phi_r)\alpha.\]
Hence we find 
\[ [\d',h](\phi_0\botimes y)=(-1)^{q_0}\phi_0\botimes [\d',h]y+(-1)^r(-1)^{q_0}\phi_0\botimes 1\botimes \cdots \botimes 1\botimes 
\D(\phi_1\cdots \phi_r)\alpha,\]
which proves the Proposition.\end{proof}

\begin{proposition}
For $\phi_0,\ldots,\phi_r\in \Om(G)$ and $\alpha\in \sW^{p,q}(A)$, we have
\begin{equation}\label{eq:for}
\begin{split}
\lefteqn{\iota_0^* \circ (1+[\d',h])^{-1}\big(\phi_0\botimes\cdots \botimes \phi_r\botimes\alpha\big)}\\&=(-1)^{rq_0+(r-1)q_1+\ldots+q_{r-1}} 
(\iota_0^* \phi_0) (\D \phi_1)\cdots (\D \phi_r)\,\alpha \in \sW^{p,q+q_0+\ldots+q_r}(A).\end{split}\end{equation}
\end{proposition}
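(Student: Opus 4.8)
The plan is to argue by induction on the simplicial degree $r$, feeding the explicit formula of Proposition \ref{prop:formula} into the nilpotency of the operator $N:=[\d',h]$. Since $N$ strictly lowers the simplicial degree (both $\d'h$ and $h\d'$ drop $r$ by one), it is nilpotent on each $\sW(T_\F E_rG)$, so $1+N$ is invertible and satisfies the operator identity $(1+N)^{-1}=1-(1+N)^{-1}N$. Applying this to $\phi_0\botimes\cdots\botimes\phi_r\botimes\alpha$ and then $\iota_0^*$, and using that $\iota_0^*$ annihilates everything of simplicial degree $\ge 1$, the term $\iota_0^*(\phi_0\botimes\cdots\botimes\phi_r\botimes\alpha)$ drops out for $r\ge 1$, leaving
\[
\iota_0^*\circ(1+N)^{-1}(\phi_0\botimes\cdots\botimes\phi_r\botimes\alpha)=-\,\iota_0^*\circ(1+N)^{-1}\big([\d',h](\phi_0\botimes\cdots\botimes\phi_r\botimes\alpha)\big).
\]
The right-hand side now involves $[\d',h]$ applied to an element of simplicial degree $r$, which by Proposition \ref{prop:formula} is a sum of terms of simplicial degree $r-1$, each again of the product form to which the inductive hypothesis $P_{r-1}$ applies.

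The decisive simplification comes from the observation that $\D(1)=\d_{CE}\iota^*(1)-\iota^*\d_{CE}(1)=0$, directly from \eqref{eq:D}. Applying $P_{r-1}$ (which I state for arbitrary factors in $\Om(G)$, the unit included) to the $i$-th summand of Proposition \ref{prop:formula}, namely $\phi_0\botimes\cdots\botimes\phi_i\botimes 1\botimes\cdots\botimes 1\botimes(\D(\phi_{i+1}\cdots\phi_r)\alpha)$, produces a factor $(\D 1)^{r-i-1}$; hence every term carrying an inserted unit, i.e. every $i<r-1$, vanishes, and only the top term $i=r-1$ survives. For that term there are no inserted units, and $P_{r-1}$ (applied with final coefficient $\beta=\D(\phi_r)\alpha\in\sW(A)$) yields the product $(\iota_0^*\phi_0)(\D\phi_1)\cdots(\D\phi_{r-1})(\D\phi_r)\alpha$ times the inductive sign $c_{r-1}=(-1)^{(r-1)q_0+\cdots+q_{r-2}}$. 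The base case $r=0$ is immediate, since then $N$ acts as zero and $\iota_0^*(\phi_0\botimes\alpha)=(\iota_0^*\phi_0)\alpha$.

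The only remaining point, and the part I expect to be most error-prone, is the sign bookkeeping. Collecting the factor $-1$ from the recursion, the prefactor $(-1)^r(-1)^{(r-1)+q_0+\cdots+q_{r-1}}$ attached to the $i=r-1$ summand in Proposition \ref{prop:formula}, and the inductive sign $c_{r-1}$, one checks that the purely numerical powers combine to $(-1)^{1+r+(r-1)}=(-1)^{2r}=1$, while the $q$-dependent exponents add, the coefficient of $q_l$ becoming $1+(r-1-l)=r-l$ for $0\le l\le r-1$. This reproduces exactly $c_r=(-1)^{rq_0+(r-1)q_1+\cdots+q_{r-1}}$, completing the induction. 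Everything apart from this sign computation is a formal consequence of Proposition \ref{prop:formula} together with the identity $\D(1)=0$, so the main obstacle is purely the careful tracking of these signs.
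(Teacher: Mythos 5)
Your proof is correct and follows essentially the same route as the paper's: both argue by induction on $r$ using Proposition \ref{prop:formula}, with all summands carrying inserted units killed by $\D(1)=0$, and your sign bookkeeping agrees with the paper's. The only cosmetic difference is that the paper first establishes a closed formula for $[\d',h]^r$ applied to the product element and then composes with $\iota_0^*\circ\sum_k(-1)^k[\d',h]^k$, whereas you fold the geometric series into the induction via the resolvent identity $(1+[\d',h])^{-1}=1-(1+[\d',h])^{-1}[\d',h]$.
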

\begin{proof}
Using induction on $r$, we use Proposition \ref{prop:formula} to prove 
\begin{equation}\label{eq:tosh}
 [\d',h]^r \big(\phi_0\botimes\cdots \botimes \phi_r\botimes\alpha\big)=
(-1)^{r+rq_0+(r-1)q_1+\ldots+q_{r-1}} \phi_0\botimes ((\D \phi_1)\cdots (\D \phi_r)\, \alpha).
\end{equation}
For $r=1$ this is just a special case of Proposition \ref{prop:formula}. 
For  $r>1$, we apply the induction hypothesis for $r'=r-1$ to the formula for 
$[\d',h]\big(\phi_0\botimes\cdots \botimes \phi_r\botimes\alpha\big)$, as given in Proposition \ref{prop:formula}.
 Only the term with $i=r-1$ gives a nonzero contribution, and yields
\eqref{eq:tosh}. 
\end{proof}
\begin{remark}
The result \eqref{eq:for} may also be written
\[ (\iota_0^*\otimes \D\otimes \cdots \otimes \D \otimes \on{id})(
\phi_0\otimes\cdots \otimes \phi_r\otimes\alpha),\]
followed by the multiplication map 
$W(A)\otimes\cdots \otimes W(A)\to W(A)$. The signs appear naturally here, according to the super-sign rule: 
The first $\D$ moves past $\phi_0$, 
the second $\D$ moves past $\phi_0,\phi_1$, and so on. Hence we obtain $q_0+(q_0+q_1)+
\ldots +(q_0+\ldots+ q_{r-1})=rq_0+(r-1)q_1+\ldots+q_{r-1}$ sign changes. 
\end{remark}

\begin{proof}[Proof of Theorem \ref{th:explicit}]
Given $X_1,\ldots,X_r\in \Gamma(A)$ and any $n\le r$ we obtain, for all $\phi_0,\ldots,\phi_r\in \Om(G)$, % and all $\alpha\in \sC(A)$,
\[ \begin{split} \lefteqn{  \big(\iota_0^*\circ (1+[\d',h])^{-1}(\phi_0\botimes\cdots \botimes \phi_r\botimes 1)\big)(X_1,\ldots,X_n,\ol{X}_{n+1},\ldots,\ol{X}_r)
}\\
&= (-1)^{rq_0+\ldots+q_{r-1}}\i_S(X_r)\cdots \i_S(X_{n+1})\i_K(X_n)\cdots \i_K(X_1) \big(\iota^*\phi_0 \,\D\phi_1\cdots \D\phi_r\big)\\
%&=(-1)^{(r-n)(q_0+\ldots+q_n)+(r-n-1)q_{n+1}+\ldots+q_{r-1}}
%\iota_r^*\big(\phi_0 \botimes \L(-X_1^R)\phi_1\botimes \cdots \botimes \L(-X_n^R)\phi_n\big)\\
% &\ \ \botimes \iota(-X_{n+1}^R)\phi_{n+1}\cdots \botimes \iota(-X_r^R)\phi_r\big)+\ldots\\
&=\iota_r^*\Big(\big(\L(-X_1^{1,R})\cdots \L(-X_n^{n,R})\i(-X_{n+1}^{n+1,R})\cdots \i(-X_r^{r,R})+\ldots \mathsf{s.p.}\big)
(\phi_0\botimes \cdots \botimes \phi_r)\Big); 
\end{split}
\]
here the lower dots signify a signed permutation of the $X_i$'s. Consequently, for $\phi\in \Om(B_rG)$ %$\phi\in \sW(T_\F E_rG)$ 
this gives 
\[ 
\begin{split}
\lefteqn{\big(\iota_0^*\circ (1+[\d',h])^{-1}\circ \kappa_r^*(\phi)\big)(X_1,\ldots,X_n,\ol{X}_{n+1},\ldots,\ol{X}_r)}\\
&=\iota_r^*\sum_{s\in \mf{S}_r} \eps(s)
\L(-X_{s(1)}^{1,R})\cdots \L(-X_{s(n)}^{n,R})\i(-X_{s(n+1)}^{n+1,R})\cdots \i(-X_{s(r)}^{r,R})\kappa_r^*\phi
\end{split}
\]
Here the sign $\eps(s)$ is the sign of the permutation putting $s(1),\ldots,s(n)\subset \{1,\ldots,r\}$ in order; 
in other words, it is $1$ if the number of pairs $1\le i<j\le n$ with $s(i)>s(j)$ is even, and is 
$-1$ if that number is odd. This implies the formula given in Theorem \ref{th:explicit}, because $-X^{i,R}$ is $\kappa_r$-related to 
$X^{i,\sharp}$. 
\end{proof}

\begin{example}
Let us examine these calculations for the case of a pair groupoid $G=\on{Pair}(M)=M\times M$. Here $\on{Lie}(G)=TM$, and for $X\in \Gamma(TM)=\mf{X}(M)$ we have 
\[ X^L=(0,X),\ \ X^R=(-X,0).\] The map $\D\colon C^\infty(M\times M)\to \sC^1(TM)=\Om^1(M)$
is given by 
\[ \D(u\otimes u')=-u'\d u,\ \ u,u'\in C^\infty(M).\]

We identify $B_pG=M^{p+1}$, where the $p+1$-tuple $(m_0,\ldots,m_p)$ corresponds to $(g_1,\ldots,g_p)$ with $g_i=(m_{i-1},m_i)$.
Similarly, $E_pG=M^{p+1}\times M$, where $(m_0,\ldots,m_p,m)$ corresponds to $(a_0,\ldots,a_p)$ with 
$a_i=(m_i,m)$. Given $u_0\otimes \cdots \otimes u_p\in C^\infty(M^{p+1})$ with $u_i\in C^\infty(M)$, the pullback to 
$E_pG$ is $f_0\botimes \cdots \botimes f_p$ with $f_i=u_i\otimes 1$, with $\D(f_i)=-\d u_i$. 
Thus
\[ \iota_0^*\circ (1+[\d,h])^{-1}(f_0\botimes \cdots \botimes f_p)=(-1)^pu_0 \d u_1 \cdots \d u_p.\]
Hence the Van Est map becomes (up to a sign)
the standard map from the Alexander-Spanier complex to the de Rham complex:
\[ \on{VE}\colon C^\infty(M^{p+1})\to \Om^p(M),\ \ \ \ u_0\otimes \cdots \otimes u_p\mapsto (-1)^p u_0 \d u_1\cdots \d u_p\]
\end{example}

\begin{appendix}
\section{Simplicial manifolds}\label{app:simplicial}\label{app:A}
In this section we give a quick review of simplicial techniques used in this paper. Standard references include Bott-Mostow-Perchik \cite{mo:no}, Goerss-Jardine \cite{goe:sim}. 
%See also the survey article written by Friedman \cite{?}.
%
\subsection{Basic definitions}
Let $\on{Ord}$ denote the category of ordered sets. The objects
in $\on{Ord}$ are $[0],[1],[2],\ldots$, where 
$[n]=\{0,\ldots,n\}$, and the morphisms in $\on{Ord}$ are the 
maps $f\colon [m]\to [n]$ such that $i\le j\Rightarrow f(i)\le f(j)$.
Any such morphism may be written as a composition of 
\emph{face maps} $\partial^j$ 
\emph{degeneracy maps} $\eps^j$, 
\[ \partial^j\colon [n]\to [n+1],\ \ j=0,\ldots,n+1,\ \ \ 
\eps^j\colon [n+1]\to [n],\ j=0,\ldots,n
\]
given by 
\[ \partial^j(i)=\begin{cases} i & i< j\\
i+1 & i\ge j\end{cases},\ \ \eps^j(i)=
\begin{cases}
i & i\le j\\
i-1 & i> j.
\end{cases}
\]
A \emph{simplicial manifold} is a contravariant functor from the category $\on{Ord}$ to the category of manifolds. We denote by $X_n$ the image of $[n]=\{0,\ldots,n\}$, and by 
$X(f)\colon X_n\to X_m$ the map corresponding to a morphism $f\colon [m]\to [n]$. We will write $\partial_i:=X(\partial^i)$, and $\eps_i:=X(\eps^i)$. 
%These satisfy the relations
%\[ \begin{split}\partial_i\partial_j&=\partial_{j-1}\partial_{i},\ \ i<j\\ \eps_i\eps_j&=\eps_{j+1}\eps_i,\ \  i\le j\\ \partial_i\eps_j&=\eps_{j-1}\partial_i,\ \ i<j \\ \partial_i\eps_j&=\on{id},\ \ i=j,j+1\\ \partial_i\eps_j&=\eps_j\partial_{i-1},\ \ i>j+1 \end{split}
%\]
%
Associated to any topological category $C$ is a simplicial space $B_\bullet C$, called its simplicial classifying space 
(or \emph{nerve}) \cite{se:cl}. Here $B_0C$ is the set of objects of the category, $B_1C$ the set of arrows (morphisms in $C$), 
$B_2C$ the set of commutative triangles, and so on. 
\begin{example}
If $G\rra M$ is a Lie groupoid  (regarded as a category), the space $B_pG$ is the manifold of 
$p$-arrows, as in Section  \ref{subsec:liegr}. 
\end{example}
%
%\begin{example}Any surjective submersion $\pi\colon Q\to M$ may be regarded as a category, with a unique arrow  between any ordered pair of points $(q_0,q_1)$ such that $\pi(q_0)=\pi(q_1)$. This category is a Lie groupoid $\on{Pair}_\pi(Q)\rra Q$ (a subgroupoid of the full pair groupoid). The corresponding simplicial manifold $B_p\on{Pair}_\pi(Q)$ is the $p+1$-fold fiber product     
% \[ B_p\on{Pair}_\pi(Q)=Q^{(p+1)}=Q\times_M\times\cdots \times_M Q\]
% of $Q$ with itself. We may think of  elements of $Q^{(p+1)}$ as maps $f\colon [p]\to Q$ whose composition with $\pi$ is constant. From this perspective,  the map $Q^{(n+1)}\to Q^{(m+1)}$  defined by a morphism $[m]\to [n]$ is $(q_0,\ldots,q_n)\mapsto (q_{f(0)},\ldots,q_{f(m)})$. In particular, the $i$-th face map deletes the $i$-th component, while the $i$-th degeneracy map repeats the $i$-th component. \end{example}
\begin{example}\label{ex:simplices}
For any fixed $p$, the set $[p]=\{0,\ldots,p\}$ may be regarded as the objects of a category, with a unique arrow $i_0\longleftarrow i_1$ for any $0\le i_0\le i_1\le p$. 
The corresponding space $B_n[p]$ is the set of $n$-arrows of this type, 
\[ i_0\longleftarrow i_1\longleftarrow\cdots \longleftarrow i_n\]
where $0\le i_0\le \cdots \le i_n\le p$. Equivalently, $B_n[p]$ is the set of nondecreasing maps $[n]\to [p]$.  
Any morphism  $[m]\to [n]$ in the category $\on{Ord}$ determines a simplicial map $B_n[p]\to B_m[p]$ 
for the category $[p]$,  by composition. 
We will denote this (discrete) simplicial manifold by $\Delta_\bullet[p]:=B_\bullet[p]$, since its geometric realization is the standard $p$-simplex. 
Any nondecreasing map $[p]\to [p']$ defines a morphism of simplicial manifolds $\Delta_\bullet[p]\to \Delta_\bullet[p']$, with geometric realization the corresponding map of standard simplices. 
\end{example}

%For any simplicial manifold $X_\bullet$, let 
%
%\[ |X|=\bigcup_{n=0}^\infty (X_n\times\Delta^n)\Big/\sim\]
%
%be its \emph{geometric realization}. Here $\Delta^n$ is the standard $n$-simplex, 
%i.e., the convex hull of the standard basis vectors in $\R^{n+1}$, and $\sim$ is the equivalence relation 
%\[ (x;\Delta(f)(y)) \sim (X(f)(x);y),\ \ \]
%for $x\in X_n,\ (s_0,\ldots,s_m)\in \Delta^m$, and every morphism $f\colon [m]\to [n]$ in the category $\on{Ord}$. Here $\Delta(f)\colon \Delta^m\to \Delta^n$ is the restriction of the linear map taking the $i$-th standard basis vector of $\R^{m+1}$ to the $f(i)$-th standard basis vector of $\R^{n+1}$.
%

\subsection{Simplicial homotopies}\label{app:A-simhom}
The two morphisms $\partial^0,\partial^1\colon [0]\to [1]$ give rise to  simplicial maps 
\[ \partial^0_\bullet,\ \partial^1_\bullet\colon \Delta_\bullet[0]\to \Delta_\bullet[1],\] 
corresponding to the inclusions of the end points.
A \emph{simplicial homotopy} between  two morphisms of simplicial manifolds $f_\bullet^0,f_\bullet^1\colon X_\bullet\to Y_\bullet$ is a morphism
\[ H_\bullet\colon \Delta_\bullet[1]\times X_\bullet \to Y_\bullet\]
such that 
\[ H_\bullet\circ (\partial^0_\bullet\times \on{id}_{X_\bullet})=f^0_\bullet,\ \ \ 
H_\bullet\circ (\partial^1_\bullet\times \on{id}_{X_\bullet})=f^1_\bullet.\]
Homotopy is an equivalence relation provided $X_\bullet$ satisfies the \emph{Kan condition} \cite{goe:sim}; in particular this is the 
case for the simplicial classifying space of a groupoid. 
To spell out the homotopy condition in more detail, note 
that $\Delta_p[1]=\{\alpha_{-1},\alpha_0,\ldots,\alpha_p\}$ with
\[\alpha_j\colon [p]\to [1],\ \ \ \alpha_j(i)=\begin{cases} 0 & i\le j\\ 1 & i>j\end{cases},\] 
hence $H_p$ is determined by the maps $H_{p,j}=H_p(\alpha_j,\cdot)$ for $-1\le j\le p$. The condition 
that $H_\bullet$ be a simplicial map becomes 
\[ \partial_i\circ H_{p,j}=\begin{cases} H_{p-1,j-1} \circ \partial_i & i\le j \\H_{p-1,j} \circ \partial_i & i>j \end{cases},\ \ \ 
\eps_i\circ H_{p,j}=\begin{cases} H_{p+1,j+1}\circ \eps_i&i\le j \\ H_{p+1,j}\circ \eps_i & i> j\end{cases},\]
and the boundary conditions are
\[ H_{p,-1}=f_p^0,\ \ H_{p,p}=f_p^1.\]
The map $(\partial^0)_p$ takes the unique element of $\Delta_p[0]$ to $\alpha_{-1}$, while $(\partial^1)_p$ takes it to 
$\alpha_p$.

Associated to any simplicial space $X$ is its \emph{Moore complex} 
$(\Z X_\bullet,\delta)$, where $\Z X_p$ are $\Z$-linear combinations of elements in $X_p$, and 
\[ \delta_p=\sum_{j=0}^{p}(-1)^j\partial_j\colon \Z X_p\to \Z X_{p-1}.\]

Any simplicial homotopy gives rise to a homotopy operator for the Moore complexes, by the formula
%These have the properties
%
%\begin{equation}\label{eq:alphaproperties}\partial_i\alpha_j=\begin{cases}\alpha_{j-1}& i\le j\\ \alpha_j& i>j\end{cases},\ \ \ \ \
%\eps_i\alpha_j=\begin{cases} \alpha_{j+1}&i\le j\\ \alpha_j & i>j\end{cases}
%\end{equation} 
%
%Then $H(\alpha_{-1},x)=f^1(x)$, $H(\alpha_p,x)=f^0(x)$, and 
% 
\begin{equation}\label{eq:homotopie}
h_p\colon  \Z X_p\to \Z Y_{p+1},\ \ h_p=\sum_{j=0}^p(-1)^{j+1} h_{p,j}
\end{equation}
with $h_{p,j}(x)=H_{p+1,j}(\eps_j(x))$. That is, $h_\bullet$ satisfies 
$ h_{p-1}\partial_p +\partial_{p+1} h_p=f^0_p-f^1_p$. 
See Goerss-Jardine \cite[Lemma 2.15]{goe:sim}.

For the following result, recall that for any foliation $\ca{F}$ of a manifold $M$, the groupoid $\on{Pair}_\F(M)\rra M$ consists of pairs $(m_0,m_1)$ of elements in the same leaf, and $B_p\on{Pair}_\F(M)$ consists of $p+1$-tuples $(m_0,\ldots,m_p)$ of elements $m_i\in M$, all in the same leaf. 
Any smooth map $f\colon M\to M$ preserving leaves extends to a simplicial map 
\begin{equation}\label{eq:fbullet} f_\bullet\colon B_\bullet\on{Pair}_\F(M)\to B_\bullet\on{Pair}_\F(M)\end{equation}
where $f_p(m_0,\ldots,m_0)=(f(m_0),\ldots,f(m_p))$. 
The following result may be regarded as a special case of \cite[Proposition 2.1]{se:cl}. The proof is a straightforward verification.
\begin{proposition}\label{prop:fol}
Let $\ca{F}$ be a foliation of a manifold $M$, and $f\colon M\to M$ a smooth map preserving leaves. 
Then 
\begin{equation}\label{eq:simplicialhomotopy}
H_{p,j}(m_0,\ldots,m_p)=(m_0,\ldots,m_j,f(m_{j+1}),\ldots,f(m_p)),
\end{equation}
defines a a simplicial homotopy $H_\bullet$ between \eqref{eq:fbullet} and the identity map. The corresponding 
homotopy operator is given by $h_p=\sum_{j=0}^p(-1)^{j+1} h_{p,j}\colon \Z B_p\on{Pair}_\F(M) \to \Z B_{p+1}\on{Pair}_\F(M)$ where (cf.~ \eqref{eq:homotopie}) 
\[  h_{p,j}(m_0,\ldots,m_p)=(m_0,\ldots,m_j,f(m_j),f(m_{j+1}),\ldots,f(m_p)).\]
Thus, $h_{p-1}\circ \partial_p+\partial_{p+1}\circ h_p=\on{id}-f_p$. If $f$ is a retraction (i.e., $f\circ f=f$), then the homotopy operator has the additional property $h_{p+1}\circ h_p=0$.
\end{proposition}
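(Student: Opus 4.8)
The plan is to treat the statement in three parts: first, verify that the formulas \eqref{eq:simplicialhomotopy} genuinely assemble into a simplicial homotopy $H_\bullet\colon \Delta_\bullet[1]\times B_\bullet\on{Pair}_\F(M)\to B_\bullet\on{Pair}_\F(M)$; second, read off the chain homotopy operator and its defining identity from the general formula \eqref{eq:homotopie}; and third, establish the additional property $h_{p+1}\circ h_p=0$ in the retraction case, which is the only step carrying genuine content.

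For the first part I would simply check the two families of compatibility conditions recorded in the appendix, namely $\partial_i\circ H_{p,j}=H_{p-1,j-1}\circ\partial_i$ for $i\le j$ and $=H_{p-1,j}\circ\partial_i$ for $i>j$, together with the analogous relations for the degeneracies $\eps_i$. Since $\partial_i$ drops the $i$-th point and $\eps_i$ repeats it, while $H_{p,j}$ leaves the first $j+1$ points untouched and applies $f$ to the remaining ones, each case reduces to bookkeeping about whether the dropped or repeated index lies in the "identity block" $\{0,\ldots,j\}$ or the "$f$-block" $\{j+1,\ldots,p\}$; leaf-preservation of $f$ guarantees that each $H_{p,j}$ lands in $B_\bullet\on{Pair}_\F(M)$. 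The boundary values are immediate, $H_{p,-1}=f_p$ and $H_{p,p}=\on{id}$, so $H_\bullet$ is a homotopy between \eqref{eq:fbullet} and the identity. Substituting $h_{p,j}(x)=H_{p+1,j}(\eps_j(x))$ into \eqref{eq:homotopie} then reproduces the operator $h_p=\sum_{j=0}^p(-1)^{j+1}h_{p,j}$ with $h_{p,j}(m_0,\ldots,m_p)=(m_0,\ldots,m_j,f(m_j),f(m_{j+1}),\ldots,f(m_p))$, and yields the homotopy identity $h_{p-1}\partial_p+\partial_{p+1}h_p=\on{id}-f_p$ claimed in the Proposition; this is exactly the specialization of \cite[Lemma 2.15]{goe:sim} (cf.~\cite[Proposition 2.1]{se:cl}) to our $H_\bullet$.

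The real work is the last claim, $h_{p+1}\circ h_p=0$ when $f\circ f=f$. Expanding gives $h_{p+1}\circ h_p=\sum_{j,k}(-1)^{j+k}\,h_{p+1,k}\circ h_{p,j}$, and the strategy is to cancel the terms in pairs. The key computation, which I expect to be the main obstacle, is the identity
\[ h_{p+1,k}\circ h_{p,j}=h_{p+1,j+1}\circ h_{p,k}\qquad(k\le j). \]
Verifying it requires composing the two insertions explicitly and invoking idempotency $f\circ f=f$ precisely at the point where a previously inserted entry (an $f(m_j)$, or an $f(m_{i-1})$ in the $f$-block) is hit by the second $f$; one checks that both sides equal
\[ (m_0,\ldots,m_k,f(m_k),\ldots,f(m_j),f(m_j),f(m_{j+1}),\ldots,f(m_p)), \]
the repeated $f(m_j)$ being produced in the two opposite orders.

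Finally I would organize the cancellation combinatorially. The assignment $(j,k)\mapsto(j',k')=(k,j+1)$ is a bijection from the index pairs with $k\le j$ onto those with $k'>j'$, with inverse $(j',k')\mapsto(k'-1,j')$; since every pair $(j,k)$ satisfies either $k\le j$ or $k>j$, this matches each term with a unique partner. Under the identity above the two matched terms carry the same operator, while their signs are opposite because $(-1)^{k+(j+1)}=-(-1)^{j+k}$. Hence the terms cancel in pairs and $h_{p+1}\circ h_p=0$, completing the proof.
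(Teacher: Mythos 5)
Your proposal is correct and follows exactly the route the paper intends: the paper dismisses this proposition with ``the proof is a straightforward verification,'' and your argument supplies precisely that verification, including the only step with real content, namely the commutation identity $h_{p+1,k}\circ h_{p,j}=h_{p+1,j+1}\circ h_{p,k}$ for $k\le j$ (where idempotency of $f$ enters) together with the sign-reversing bijection $(j,k)\mapsto(k,j+1)$ that cancels the terms of $h_{p+1}\circ h_p$ in pairs. I checked the key identity and the pairing directly and both are right, so there is nothing to add.
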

%Here is a more conceptual explanation, cf.~ : View $[1]$ as a category, as in Example \ref{ex:simplices}. For $m\in M$ let $r_0(m)=m$ and $r_1(m)=f(m)$. There is a functor $[1]\times \on{Pair}_\F(M)\to \on{Pair}_\F(M)$ given on objects by $(i,m)\mapsto r_i(m)$ and on arrows by  \[ (i_0\leftarrow i_1,(m_0,m_1)) \mapsto \big(r_{i_0}(m_0),r_{i_1}(m_1)\big).\] This functor induces a morphism of simplicial classifying spaces, \[H_\bullet\colon B_\bullet([1]\times \on{Pair}_\F(M))=B_\bullet[1]\times B_\bullet \on{Pair}_\F(M)\to B_\bullet \on{Pair}_\F(M),\] which is the desired homotopy. \end{proof}
%

We will use the following special case: Suppose $\pi\colon Q\to M$ is a surjective submersion  admitting a section $\iota\colon M\to Q$. 
The submersion defines a foliation of $Q$, where $B_p\on{Pair}_\F Q$ is the $p+1$-fold fiber product $Q^{(p+1)}=Q\times _M\cdots \times_M Q$. Take $f=\iota\circ \pi\colon Q\to Q$. The proposition shows that the two maps 
\[ \pi_\bullet \colon Q^{(\bullet+1)}\to M,\ \ \iota_\bullet \colon M\to Q^{(\bullet+1)}\]
are simplicial homotopy inverses, with an explicit homotopy operator
\[ h_p(q_0,\ldots,q_p)=\sum_{i=0}^p (-1)^i (q_0,\ldots,q_i,m,\ldots,m)\]
where $m=\pi(q_0)=\ldots=\pi(q_p)$.
%
%
%Let us also consider the operator $h=\iota_{p+1} \pi_p-\ca{H}$. Thus\[ h(q_0,\ldots,q_p)=\sum_{i=0}^{p+1}(-1)^i (q_0,\ldots,q_{i-1},\iota(m)\ldots,\iota(m)).\]
%\begin{lemma}The operator $h$ satisfies \[ h\partial+\partial h=\on{id}-\iota_0 \pi_0\]where $\iota_0\pi_0$ is declared to be $0$ on $\Z Q^{(p+1)}$ with $p>0$. Furthermore, $h\circ h=0$ and $h\circ \iota_0=0$.\end{lemma}\begin{proof}The equation for $[h,\partial]$ may be checked by direct computation, or by using the corresponding property of $\ca{H}$.Let us check that $h$ squares to zero. $h(h(q_0,\ldots,q_p))$ is a linear combination of terms $(q_0,\ldots,q_l,m,\ldots,m)$ (where $m$ appears $p+2-l$ times).In the double sum $\sum_{j=0}^{p+2}\sum_{i=0}^{p+1}\cdots$ defining $h\circ h$ on $\Z Q^{(p+1)}$, the contributions to this term come from summands with $i=l+1,\ j> l+1$ and from $i\ge l+1,\ j=l+1$.Hence, the coefficient of this term is\[ (-1)^{l+1}\sum_{j=l+1}^{p+1}(-1)^j+(-1)^{l+1} \sum_{i=l}^p(-1)^i=0.\qedhere\]\end{proof}

\section{Homological perturbation theory}\label{app:perturb}
In this paper we used the following two results, Lemmas \ref{lem:A} and \ref{lem:B}, which are special cases of results from homological perturbation theory. 

Let $(C^{\bullet,\bullet},\d,\delta)$ be a double complex, with differentials $\delta$ of bidegree $(0,1)$ and $\d$ of bidegree $(1,0)$
so that $[\d,\delta]=\d\delta+\delta\d=0$. We assume that $C^{r,s}$ is non-zero only in degrees $r,s\ge 0$.
The corresponding total complex is given by $\on{Tot}^\bullet C=\bigoplus_{r+s=\bullet} C^{r,s}$
with the total differential $\d+\delta$. Suppose that 
\[ i\colon D^{\bullet,\bullet}\hra C^{\bullet,\bullet}\] 
is a subcomplex for both differentials $\d$ and $\delta$, and that there exists an operator $h$ of bidegree $(-1,0)$ 
such that\footnote{In what follows, the brackets $[\cdot,\cdot]$ indicate graded commutators for the total degree.}
\[ [h,\delta]=h\delta+\delta h=1-i\circ p,\]
with $p\colon  C^{\bullet,\bullet}\to D^{\bullet,\bullet}$ a left inverse to $i$.
This equation shows that $i$ is a homotopy equivalence with respect to $\delta$, with homotopy 
inverse $p$. Indeed, $p\circ i=\on{id}$, while the projection operator$\Pi=i\circ p$ is $\delta$-homotopic to the identity.
Note however that $p$ need not intertwine the differential $\d$. 

By the following result, one can modify $p$ and $i$ to obtain a homotopy equivalence 
for the total differential $\d+\delta$.
%
%Note that $R:=p\circ i$ is a cochain map with respect to $\delta$, hence $h$ defines 
%a homotopy equivalence for the differential $\delta $. 
%By the following result, one can modify 
%the projection and the homotopy to obtain a homotopy equivalence for the total differential $\d+\delta $.
It is a version of the \emph{Basic Perturbation Lemma} \cite{bro:twi,gug:cha,gug:per,gug:per2,hue:sma}. 
See Crainic \cite{cra:dif} and Johnson-Freyd \cite{joh:hom} for some recent applications.
\begin{lemma}[Brown \cite{bro:twi}, Gugenheim \cite{gug:cha}]\label{lem:A}
Put $p'=p(1+\d h)^{-1},\ \ i'=(1+h\d)^{-1}i,\ \ h'=h(1+\d h)^{-1}$. Then:
\begin{enumerate}
\item 
The map $\Pi'=i'p' $ 
is a cochain map relative to the total differential $\d+\delta$. In fact, it is homotopic to the identity 
with the homotopy operator $h'$:
\[
[h',\d+\delta]=1-i'p'.\]
\item 
If $h$ preserves the subcomplex $D$, and commutes with $\d$ on $D$, 
then $\Pi'$ is again a projection onto $D$. Furthermore, in this case 
$p'$ is a cochain map with respect to the total differential, and  
is a homotopy equivalence, with homotopy inverse $i'$.
\end{enumerate}
\end{lemma}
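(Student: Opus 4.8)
The plan is to prove Lemma~\ref{lem:A} as a direct instance of the Basic Perturbation Lemma, carrying out the verification by hand rather than quoting it, so that the precise form of the maps $p',i',h'$ is transparent. Write $P=1+\d h$ and $Q=1+h\d$. Since $\d h$ and $h\d$ have total degree $0$ and strictly lower one of the two nonnegative gradings, they are nilpotent, so $P$ and $Q$ are invertible and $p'=pP^{-1}$, $i'=Q^{-1}i$, $h'=hP^{-1}$ are well defined. First I would record the elementary identity $Qh=hP$ (both sides equal $h+h\d h$), giving
\[ h'=hP^{-1}=Q^{-1}h, \]
together with its consequences $h'\d=Q^{-1}h\d=1-Q^{-1}$ and $\d h'=\d hP^{-1}=1-P^{-1}$, i.e.
\[ Q^{-1}=1-h'\d,\qquad P^{-1}=1-\d h'. \]
These, together with the homotopy relation $h\delta+\delta h=1-ip$ and the double-complex relations $\d^2=0$, $\d\delta+\delta\d=0$, are the only inputs the argument requires.

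For part~(1) I would first establish the homotopy formula $[h',\d+\delta]=1-i'p'$. Using $\Pi=ip=1-(h\delta+\delta h)$ and $Q^{-1}h=h'=hP^{-1}$ one finds
\[ i'p'=Q^{-1}\Pi P^{-1}=Q^{-1}P^{-1}-h'\delta P^{-1}-Q^{-1}\delta h', \]
while $[h',\d+\delta]=(h'\d+\d h')+(h'\delta+\delta h')=(2-Q^{-1}-P^{-1})+(h'\delta+\delta h')$. Equating the two, the claim becomes
\[ h'\delta(1-P^{-1})+(1-Q^{-1})\delta h'=-(1-Q^{-1})(1-P^{-1}), \]
and substituting $1-P^{-1}=\d h'$ and $1-Q^{-1}=h'\d$ reduces the left side to $h'(\delta\d+\d\delta)h'$ and the right side to $-h'\d^2h'$, both zero. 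The cochain-map property of $\Pi'=i'p'$ is then automatic: setting $D=\d+\delta$, from $1-\Pi'=h'D+Dh'$ and $D^2=0$ one gets $D(1-\Pi')-(1-\Pi')D=Dh'D-Dh'D=0$, so $[D,\Pi']=0$ and $h'$ is a total homotopy between $\Pi'$ and the identity.

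For part~(2) I would feed in the two extra hypotheses. Writing $hi=ih_D$, $\delta i=i\delta_D$, $\d i=i\d_D$ for the restrictions to $D$, the relation $1-\Pi=h\delta+\delta h$ restricted to the image of $i$ gives $h_D\delta_D+\delta_D h_D=0$, and the assumption that $h$ graded-commutes with $\d$ on $D$ gives $h_D\d_D+\d_D h_D=0$ (in the applications the signs in $\d=(-1)^r\d_{CE}$ are exactly what produce this graded commutation). Then, using $\d^2=0$,
\[ QP\,i=(1+\d h+h\d)\,i=i+i(\d_D h_D+h_D\d_D)=i, \]
so $P^{-1}Q^{-1}i=i$ and hence $p'i'=pP^{-1}Q^{-1}i=pi=1_D$. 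This makes $\Pi'=i'p'$ idempotent, a genuine projection onto $D$. A short further check shows that $i'$, and symmetrically $p'$, intertwines both differentials: $\d i'=i'\d_D$ follows from $\d Q^{-1}=P^{-1}\d$ and $Q^{-1}\d=\d$, while $\delta i'=i'\delta_D$ reduces to $[\delta,Q^{-1}]i=0$, which comes from $[\delta,Q]=(1-ip)\d$ together with $\d Q^{-1}i=i\d_D$. Combining $p'i'=1_D$ with the total homotopy $i'p'=\Pi'\sim\on{id}$ from part~(1) exhibits $p'$ as a homotopy equivalence with inverse $i'$.

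The main obstacle is organizational rather than conceptual. Expanding the geometric series $P^{-1}$, $Q^{-1}$ term by term produces a swarm of monomials in $h$, $\d$, $\delta$ and hides the cancellation; the trick that keeps part~(1) short is to route every computation through $h'$ and the resolvent identities $Q^{-1}=1-h'\d$, $P^{-1}=1-\d h'$, after which the homotopy identity collapses onto the bare relations $\d^2=0$ and $\d\delta+\delta\d=0$. The one real subtlety lies in part~(2): one needs genuine graded commutation $\d h+h\d=0$ on $D$, not mere commutation, in order to force $QP\,i=i$ and thus $p'i'=1_D$ on the nose, and this is the point at which the sign conventions of the double complex must be used.
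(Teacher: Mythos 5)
Your proof is correct and follows essentially the same route as the paper's: part (1) is the same verification that the paper performs by conjugating $[h',\d+\delta]$ with $1+h\d$ and $1+\d h$, here recast through the resolvent identities $P^{-1}=1-\d h'$, $Q^{-1}=1-h'\d$, and part (2) rests on the identical computation $(1+h\d)(1+\d h)\,i=(1+[\d,h])\,i=i$. If anything, your part (2) is more careful than the paper's, which asserts without detail that $p'$ inherits the cochain property from $\Pi'$; your check that $i'$ intertwines both differentials, together with $p'i'=1$ and the fact that $i'$ maps $D$ isomorphically onto $D$, is exactly what is needed to justify that step.
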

\begin{proof}
\begin{enumerate}
\item
We have $(1+hd)h'=h=h'(1+\d h)$, hence 
\[ (1+h\d)[h',\d+\delta](1+\d h)=h(\d+\delta)(1+\d h)+(1+h\d)(\d+\delta)h
=[h,\d+\delta].\]
where we used $\d\delta+\delta\d=0$. On the other hand, 
$[h,\delta]=1-ip$ implies
\[ (1+h\d) (1-i'p')(1+\d h)=[h,\d+\delta].\]
Comparing the two formulas, we see $[h',\d+\delta]=1-i'p'$. 
\item 
We have 
\[ p'i'=p(1+h \d)^{-1}(1+\d h)^{-1}i=p(1+[\d,h])^{-1}i.\]
Hence, if $[\d,h]$ vanishes on $D$, then $p'i'=pi=1$ so that $\Pi'=i'p'$ is again a projection. 
If $h$ preserves $D$, so that $(1+h\d)$ restricts to an invertible transformation of $D$, we 
see that $\Pi'$ has 
the same range as $\Pi$. Since $\Pi'$ is a cochain map with respect to $\d+\delta$, 
the same is true of $p'$. 
\end{enumerate}
\end{proof}
%
%\begin{remark}It is often useful to have the additional properties $h^2=0,\ h\Pi=0$. As explained in \cite{gug:per}, one can always modify $h$ to arrange these conditions: Given any $h$ with $[h,\partial]=1-\Pi$, replace $h$ with $h'=(1-\Pi)h(1-\Pi)$. This then satisfies $h'\Pi=\Pi h'=0$ and $[h',\partial]=1-\Pi$. Then, replace $h'$ with $h''=h'\partial h'$. Then $[h'',\partial]=1-\Pi$, as well as $h''\Pi=0$ and $(h'')^2=0$.\end{remark}
\begin{remark}
The second part of this Lemma applies in particular if $h$ vanishes 
on $D$. Note also that if $h^2=0$, then $D$ is preserved by $h$, since  $[h,\Pi]=[h,1-[h,\delta]]=0$. 
\end{remark}
%\begin{remark}Suppose that for some $n$, we are given $h^0,\ldots,h^{n+1}$ such that $[h,\delta]=1-ip$ holds in bidegrees$(r,s)$ with $r\le n$. Then, $[h',\d+\delta]=1-i'p'$ holds in bidegree $(r,s)$ with $r\le n$, and with $h',i',p'$ as in the Lemma. (Note that for the given bidegrees, the defining formulas for $h',i',p'$only involve $h^r$ with $r\le n+1$.)\end{remark}

%
Let us now make the additional assumption that bidifferential space $C^{\bullet,\bullet}$ has a compatible algebra structure
$\phi\otimes \psi\mapsto \phi\cup\psi$, with $D^{\bullet,\bullet}$ a subalgebra. 
Thus, in particular $\d$ and 
$\delta $ are derivations of this algebra structures. We also assume that the projection $p$ is an algebra morphism
and that $(C_{\on{tot}}^\bullet,\d+\delta )$ is a differential algebra. 
\begin{lemma}[Gugenheim-Lambe-Stasheff \cite{gug:per2}]\label{lem:B} 
Suppose the homotopy operator $h$ is a \emph{$\Pi$-derivation}, that is, 
\[ h(\phi\cup \psi)=h\phi\cup \Pi\psi+(-1)^{|\phi|}\phi\cup h\psi.\]
Assume furthermore that $h$ satisfies the `side conditions'
\[ h\circ h=0,\ \ p\circ h=0\]
Then the map $\Pi'=\Pi (1+\d h)^{-1}\colon C^\bullet_{\on{tot}}\to D^\bullet_{\on{tot}}\subseteq  C^\bullet_{\on{tot}}$ is a morphism of differential algebras.
\end{lemma}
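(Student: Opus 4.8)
The plan is to deduce the multiplicativity of $\Pi'=\Pi(1+\d h)^{-1}$ from the multiplicativity of the unperturbed projection $\Pi=i\circ p$ by the \emph{tensor trick}. Since Lemma \ref{lem:A} already guarantees that $\Pi'$ is a cochain map for the total differential $\d+\delta$, the only thing left to check is that it is an algebra map, i.e.\ that $\Pi'\circ\nabla=\nabla\circ(\Pi'\otimes\Pi')$, where $\nabla\colon C^\bullet_{\on{tot}}\otimes C^\bullet_{\on{tot}}\to C^\bullet_{\on{tot}}$ denotes the product $\phi\otimes\psi\mapsto\phi\cup\psi$. First I would record the consequences of the side conditions. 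From $p\circ h=0$ one gets $\Pi\circ h=i\circ p\circ h=0$, and from $h\circ h=0$ the Remark following Lemma \ref{lem:A} gives $[h,\Pi]=0$; together these yield $h\circ\Pi=\Pi\circ h=0$. I would also note that $\Pi$ is itself an algebra morphism, since $p$ is one by hypothesis and $i$ is the inclusion of the subalgebra $D$, and that $\d,\delta$ are derivations, so that $\nabla$ is a cochain map for each of the two differentials.

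Next I would build the tensor-product contraction data. On $C^\bullet_{\on{tot}}\otimes C^\bullet_{\on{tot}}$, equipped with the differential $\delta^\otimes=\delta\otimes 1+1\otimes\delta$, the triple $(i\otimes i,\ p\otimes p,\ H)$ with homotopy $H=h\otimes\Pi+1\otimes h$ is contraction data onto $D\otimes D$ with projection $\Pi\otimes\Pi$: a direct Koszul-sign computation gives $[H,\delta^\otimes]=1-\Pi\otimes\Pi$, while the side conditions give the tensor side conditions $H\circ H=0$ and $(p\otimes p)\circ H=0$ (here $H\circ H=0$ uses precisely $h\circ\Pi=\Pi\circ h=0$). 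The decisive point is that $\nabla$ is a \emph{morphism} of this data onto $(C^\bullet_{\on{tot}},\delta,h)$: one has $\nabla\circ(\Pi\otimes\Pi)=\Pi\circ\nabla$ (multiplicativity of $\Pi$), $\nabla\circ(\d\otimes 1+1\otimes\d)=\d\circ\nabla$ (derivation property), and, crucially, $\nabla\circ H=h\circ\nabla$ — this last identity is exactly the $\Pi$-derivation hypothesis on $h$.

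Finally I would run the perturbation on the tensor complex. Applying Lemma \ref{lem:A} on $C^\bullet_{\on{tot}}\otimes C^\bullet_{\on{tot}}$ with perturbation $\d^\otimes=\d\otimes 1+1\otimes\d$ produces the perturbed projection $(\Pi\otimes\Pi)(1+\d^\otimes H)^{-1}$. Using the intertwining relations above, I would first pull $\nabla$ past $\Pi\otimes\Pi$ and then push it through the powers of $\d^\otimes H$ by induction, invoking $\nabla\circ\d^\otimes=\d\circ\nabla$ and $\nabla\circ H=h\circ\nabla$ at each step, to obtain $\nabla\circ(\Pi\otimes\Pi)(\d^\otimes H)^n=\Pi(\d h)^n\circ\nabla$ and hence, after summing the geometric series, $\nabla\circ(\Pi\otimes\Pi)(1+\d^\otimes H)^{-1}=\Pi(1+\d h)^{-1}\circ\nabla=\Pi'\circ\nabla$.

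The remaining — and genuinely technical — step is to identify the tensor perturbed projection with $\Pi'\otimes\Pi'$, i.e.\ to show that perturbation commutes with the tensor product of contraction data. This is where the side conditions $h\circ\Pi=\Pi\circ h=0$ are indispensable: they are exactly what kills the ``mixed'' terms, those in which $\d$ acts on one tensor factor while $h$ acts on the other, so that only the terms already present in $\Pi'\otimes\Pi'$ survive. Granting this identification, the computation of the previous paragraph becomes $\Pi'\circ\nabla=\nabla\circ(\Pi'\otimes\Pi')$, which is the asserted multiplicativity. I expect this tensor-compatibility identity to be the main obstacle; a self-contained alternative, avoiding tensor products altogether, is to expand $\Pi'=\sum_{n\ge 0}(-1)^n\Pi(\d h)^n$ and verify the same equality directly by induction on $n$, the side conditions again serving to discard all mixed cross-terms.
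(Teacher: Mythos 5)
Your tensor-trick strategy is a legitimate and genuinely different route from the paper's proof, and most of the individual steps check out: the side conditions do give $h\circ\Pi=\Pi\circ h=0$; the triple $\bigl(i\otimes i,\,p\otimes p,\,H\bigr)$ with $H=h\otimes\Pi+1\otimes h$ is indeed contraction data onto $D\otimes D$ satisfying the side conditions; the identity $\nabla\circ H=h\circ\nabla$ is precisely the $\Pi$-derivation hypothesis; and the computation $\nabla\circ(\Pi\otimes\Pi)(1+\d^{\otimes}H)^{-1}=\Pi(1+\d h)^{-1}\circ\nabla=\Pi'\circ\nabla$ is correct and slick. The problem is the step you yourself flag and then grant: the identification $(\Pi\otimes\Pi)(1+\d^{\otimes}H)^{-1}=\Pi'\otimes\Pi'$. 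This is not a routine formality but the actual content of the lemma. Expanding $(\d^{\otimes}H)^{k}$ produces, besides the ``pure'' words $(\d h\otimes\Pi)^{a}(1\otimes\d h)^{b}$, mixed words in which $\d$ acts on one tensor factor and $h$ on the other, and one must show that every such word is annihilated by $\Pi\otimes\Pi$ on the left. This is true, but the verification is a word-by-word combinatorial analysis using $hh=0$, $\Pi h=h\Pi=0$, and $\d\d=0$ (for instance one must rule out words such as $(1\otimes\d h)(h\otimes\d\Pi)(\d h\otimes\Pi)$, which do not vanish termwise and die only after the final projection). Until that analysis is carried out, your argument establishes only $\Pi'\circ\nabla=\nabla\circ(\Pi\otimes\Pi)(1+\d^{\otimes}H)^{-1}$, not the asserted multiplicativity $\Pi'\circ\nabla=\nabla\circ(\Pi'\otimes\Pi')$.

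It is worth noting that the missing analysis is essentially isomorphic in difficulty to the paper's own proof, which avoids tensor products entirely: there one first shows $h$ vanishes on $D$, so that $\Pi'=\Pi(1+[\d,h])^{-1}=\sum_{k}(-1)^{k}\Pi H^{k}$ with $H=[\d,h]$, then derives the Leibniz-type identity
\[ H(\phi\cup\psi)=H\phi\cup\Pi\psi+\phi\cup H\psi+(-1)^{|\phi|+1}\,h\phi\cup[\d,\Pi]\psi, \]
iterates it using $H\Pi=0$ and $[H,h]=0$ to get $H^{k}(\phi\cup\psi)=\sum_{j}H^{k-j}\phi\cup\Pi H^{j}\psi+\sum_{\nu}h\phi^{(k)}_{\nu}\cup\psi^{(k)}_{\nu}$, and applies $\Pi$ at the end so that the error terms (all of the form $h(\cdot)\cup(\cdot)$) are killed by $\Pi h=0$. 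The bookkeeping of error terms there is exactly the bookkeeping of mixed words you would have to do on the tensor side. So your proposal is not wrong in conception, but as written it defers the one step that carries the mathematical weight; your suggested ``self-contained alternative'' of expanding $\Pi'=\sum_{n}(-1)^{n}\Pi(\d h)^{n}$ and inducting is in effect the paper's proof, and to complete either version you must actually exhibit and control the cross-terms rather than assert that the side conditions discard them.
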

\begin{proof}
Observe that $hh=0$ implies that $h$ commutes with $\Pi=1-[h,\delta ]$. Hence, $ph=0 \Rightarrow \Pi h=0\Rightarrow 
h\Pi=0\Rightarrow hi=0$. That is, $h$ vanishes on $D$. It follows that $i'=i$, hence  $\Pi'=\Pi(1+\d h)^{-1}=
\Pi(1+[\d,h])^{-1}$. With $H=[\d,h]$, we obtain 
\[ \Pi'=\Pi(1+H)^{-1}=\sum_{k=0}^\infty (-1)^k \Pi H^k.\]
The $\Pi$-derivation property of $h$ implies the following property of $H$:
\[ H(\phi\cup\psi)=H\phi\cup \Pi\psi+\phi\cup H\psi+(-1)^{|\phi|+1} h\phi\cup [\d,\Pi]\psi.\]
Iteration of this formula, using $H\Pi=0$ and $[H,h]=0$, gives 
\[ H^k(\phi\cup \psi)=\sum_{j=0}^k H^{k-j} \phi \cup \Pi^{k-j} H^{j} \psi +\sum_\nu h\phi^{(k)}_\nu \cup \psi^{(k)}_\nu\]
with certain elements $\phi^{(k)}_\nu,\psi^{(k)}_\nu$. Now apply the projection $\Pi$. Since $\Pi$ is an algebra morphism, and 
$\Pi h=0$ and $H\Pi=0$, we obtain 
\[ \Pi H^k(\phi\cup \psi)=\sum_{j=0}^k \Pi H^{k-j} \phi \cup \Pi H^{j} \psi ,\]
which gives $\Pi'(\phi\cup\psi)=\Pi'\phi\cup \Pi'\psi$ as desired. 
\end{proof}
\begin{remark}\label{rem:gug}
The same proof also gives the following more general statement, applicable to bilinear maps of bidifferential spaces.
We will again write this bilinear map as a `cup' product, although it might be for example a module action, a Lie bracket, etc.
Thus suppose 
\[ \cup \colon C_1\otimes C_2\to C_3\] 
is a morphism of bidifferential spaces. Suppose that $\cup$ restricts to a bilinear map on subcomplexes
$i_\nu\colon D_\nu\hra C_\nu$, that $p_\nu\colon D_\nu\to C_\mu$ are compatible with $\cup$ in the sense that 
$p_3(\phi\cup\psi)=p_1(\phi)\cup p_2(\psi)$, and that we are given homotopy operators $h_\nu$ 
for the $\delta$-differentials, i.e.,  
\[ [h_\nu,\delta]=1-i_\nu p_\nu.\]
If $h_\nu$ have the `derivation property'
\[ h_3(\phi\cup \psi)=h_1\phi\cup \Pi_2\psi+(-1)^{|\phi|}\phi\cup h_2\psi\]
for $\phi\in C_1,\ \psi\in C_2$, and if the side conditions $h_\nu^2=0$ and $p_\nu h_\nu=0$
are satisfied, then $\Pi_\nu'=\Pi_\nu(1+\d h_\nu)^{-1}$ are cochain maps for the total differentials, with 
\[ \Pi_3'(\phi\cup\psi)=\Pi_1'(\phi)\cup \Pi_2'(\psi).\]
\end{remark}

\end{appendix}

%\bibliography{extra}
%\bibliographystyle{amsplain}

\def\cprime{$'$} \def\polhk#1{\setbox0=\hbox{#1}{\ooalign{\hidewidth
  \lower1.5ex\hbox{`}\hidewidth\crcr\unhbox0}}} \def\cprime{$'$}
  \def\cprime{$'$} \def\cprime{$'$}
  \def\polhk#1{\setbox0=\hbox{#1}{\ooalign{\hidewidth
  \lower1.5ex\hbox{`}\hidewidth\crcr\unhbox0}}} \def\cprime{$'$}
  \def\cprime{$'$} \def\cprime{$'$} \def\cprime{$'$} \def\cprime{$'$}
\providecommand{\bysame}{\leavevmode\hbox to3em{\hrulefill}\thinspace}
\providecommand{\MR}{\relax\ifhmode\unskip\space\fi MR }
% \MRhref is called by the amsart/book/proc definition of \MR.
\providecommand{\MRhref}[2]{%
  \href{http://www.ams.org/mathscinet-getitem?mr=#1}{#2}
}
\providecommand{\href}[2]{#2}

%\bibliography{../../Biblio/ref}
%\bibliography{../../../Biblio/ref}
%\bibliography{../../../../Biblio/ref.bib}

\end{document}